\title{System of Lane-Emden equations as IVPs BVPs and Four Point BVPs\\\&\\Computation with Haar Wavelets}
\author{Amit K. Verma$^a$\thanks{Corresponding author email: akverma@iitp.ac.in}, Narendra Kumar$^b$, Diksha Tiwari$^c$\\\small{\it{$^{a,b}$Department of Mathematics}} \\\small{\it{Indian Institute of Technology Patna,}}\\\small{\it{ Bihta, Patna 801103, (BR) India.}}\\\small{\textit{$^c$Faculty of Mathematics, University of Vienna, Austria.}}}
\date{\today}
\theoremstyle{definition}
\newtheorem{definition}{Definition}[section]
\newtheorem{theorem}{Theorem}[section]
\begin{document}

\maketitle

\begin{abstract}
In this work we present Haar wavelet collocation method and solve the following class of system of Lane-Emden equation defined as
\begin{eqnarray*}
-(t^{k_1} y'(t))'=t^{-\omega_1} f_1(t,y(t),z(t)),\\
-(t^{k_2} z'(t))'=t^{-\omega_2} f_2(t,y(t),z(t)),
\end{eqnarray*}
where $t>0$, subject to initial values, boundary values and four point boundary values: 
\begin{eqnarray*}
\mbox{Initial Condition:}&&y(0)=\gamma_1,~y'(0)=0,~z(0)=\gamma_2,~z'(0)=0,\\
\mbox{Boundary Condition:}&&y'(0)=0,~y(1)=\delta_1,~z'(0)=0,~z(1)=\delta_2,\\
\mbox{Four~point~Boundary~Condition:}&&y(0)=0,~y(1)=n_1z(v_1),~z(0)=0,~z(1)=n_2y(v_2),
\end{eqnarray*}
where $n_1$, $n_2$, $v_1$, $v_2$ $\in (0,1)$ and $k_1\geq 0$, $k_2\geq0$, $\omega_1<1$, $\omega_2<1$ are real constants. Results are compared with exact solutions in the case of IVP and BVP. In case of four point BVP we compare the result with other methods. Convergence of these methods is also established and found to be of second order. We observe that as resolution is increased to $J=4$ we get the exact values for IVPs and BVPs. For four point BVPs also at $J=4$, we get highly accurate solutions, e.g., the $L^\infty$ error is of order $10^{-16}$ or $10^{-17}$. 
\end{abstract}
{\textit{Keywords:}} Haar Wavelet; Coupled Lane-Emden Equations; IVPs; BVPs; Four Point BVPs;\\
{\textit{AMS Subject Classification:}} 65T60; 65L05; 34B16; 34B15; 34B10

\section{Introduction} Lane-Emden type of equations in one dependent variable occur in several branches of sciences and engineering \cite{CS1967, PL1952, RA1986, RW1989, JV1998}. Excellent treatment of existence uniqueness of such a generalised class of Lane-Emden type equations can be found in  \cite{PANDEY2008NARWA, PANDEY2009NATMA, PANDEY2008JMAA, VERMA2011NATMA, Pandey2010JAMC, PANDEY2010AMC, akviitkgp2009, Verma2019AADM} and the references cited there in. Existence of solutions for three point BVPs can be found in \cite{Singh2013IJDE, Verma2014EJDE, Verma2015JOMC1, Singh2015JOMC2, Verma2015MMA, Singh2017CAA} and for numerical solution one may refer \cite{Singh2019JAAC}. In a recent work \cite{VermaIJCM2019} existence is proved for a class of four point BVPs. Recently Haar wavelets are used efficiently to solve Lane-Emden equations for example please refer the papers \cite{Amit-Diksha-2018, RandhirSingh2019, Randhir-Himanshu-2019}. Other wavelets are also utilised to solve Lane-Emden equations \cite{Amit-Diksha-2019-1, Amit-Diksha-2019-2}.

Ma \cite{MA2000NATMA}  considered the following set of differential and boundary equations:
\begin{eqnarray}
&&u''(t)+\lambda a(t) f (u(t), v(t))=0, \quad t \in (0, 1),\\
&&v''(t)+\lambda b(t) g(u(t),v(t))=0, \quad t \in (0, 1),\\
&&u(0) = v(0)=u(1)=v(1)=0.
\end{eqnarray}
Under suitable conditions on $a(t),b(t),f,g$ and using theory of cones existence of multiple positive solutions is proved in \cite{MA2000NATMA}.

For 4-point BVP the work of Zhang et al. \cite{Zhang2009} may be referred, in which singularity was allowed at both end points of the BVP. They considered
\begin{eqnarray}
&&-u''(t) = f (t,v(t)), \quad t \in (0, 1),\\
&&-v''(t) = g(t,u(t)), \quad t \in (0, 1),\\
&&u(0) = a u(\xi), \quad u(1) = b u(c),\\
&&v(0) = a v(\xi), \quad v(1) = b v(c),
\end{eqnarray}
where $0 <\xi<c< 1$, $0 \leq a < \frac{1}{1-\xi}$ ,  $ 0 \leq b<\frac{1}{c}$ , $a \xi (1 - 
b) + (1 - a)(1 - b c) > 0$; $f (t,v)$ and $g(t,u)$ may be singular at $t = 0 $ and/or $t = 1$. Based on functional expansion-compression fixed point theorem existence of positive solution is achieved. 

Wang et al. \cite{Wang2009} by using the fixed point theory of cone expansion and compression with norm type, studied the existence of positive solutions for a class of second-order nonlinear $m-$point boundary value problems of differential system.

Hao et al. \cite{Hao2018} considered the following set of equations
\begin{eqnarray}
&&-u''(x)+\frac{r}{x}u'(x) = f_1 (x,u(x),v(x)), \quad x \in (0, 1),\\
&&-v''(x) +\frac{r}{x}v'(x) = f_2 (x,u(x),v(x)), \quad x \in (0, 1),
\end{eqnarray}
subject to conditions
\begin{eqnarray}
&&u'(0) = 0, \quad u(1) = \beta_1,\\
&&v'(0) = 0, \quad v(1) = \beta_2,
\end{eqnarray}
where $r$ is a real constant and $f_1(u(x), v(x))$ and $f_2(u(x), v(x))$ are arbitrary functions of $u$ and $v$. They used  series representation to approximate the solution of coupled Lane-Emden boundary value problems. Xie et al. \cite{Xie2019} also solved similar type of Lane-Emden problems by using differential transform method coupled with Adomian polynomials.

Yalcin \cite{Yalcin2018} proposed an efficient numerical method for solving system of Lane-Emden type equations using Chebyshev operational matrix method. He et al. \cite{He2019} used Taylor series method to solve system of Lane-Emden equations of second order. 

Madduri et al. \cite{Madduri2019JOMC} presented a fast-converging iterative scheme to approximate the solution of a system of Lane-Emden equations arising in catalytic diffusion reactions. Mahalakshmi et al. \cite{Mahalakshmi2019JOMC} presented an efficient wavelet-based method for the numerical solutions of nonlinear coupled reaction-diffusion equations which occurs in biochemical engineering. 

Very recently, Barnwal et al. \cite{Barnwal2019} conisdered the following nonlinear four point BVP,
\begin{eqnarray}
&&-(p(x)u'(x)' = q(x) f_1 (x,u(x),v(x)),\quad x \in (0, 1),\\
&&-(p(x)v'(x))' =q(x) f_2 (x,u(x),v(x)),\quad x \in (0, 1),
\end{eqnarray}
subject to conditions
\begin{eqnarray}
&&u(0) = v(0)=0,\quad u(1) = \mu_1 z(v_1), \quad u(1) = \mu_2z(v_2).
\end{eqnarray}
Here $p(0)=0$, $\int_0^\infty(1/p)dt<\infty$ and $q(x)>0$ such that $q\in L^1[0,1]$. Such BVPs are also referred as doubly singular BVPs. They used monotonic iterative technique without the presence of upper-lower solutions. 

In the present work we use Haar wavelet collocation approach together with Newton Raphson method to solve system of nonlinear Lane-Emden equations
\begin{eqnarray}
\label{p2eqn1}-(t^{k_1} y'(t))'=t^{-\omega_1} f_1(t,y(t),z(t)),\\
\label{p2eqn2}-(t^{k_2} z'(t))'=t^{-\omega_2} f_2(t,y(t),z(t)),
\end{eqnarray}
where $t>0$, subject to initial values, boundary values and four point boundary values: 
\begin{eqnarray*}
\mbox{Initial Condition:}&&y(0)=\gamma_1,~y'(0)=0,~z(0)=\gamma_2,~z'(0)=0,\\
\mbox{Boundary Condition:}&&y'(0)=0,~y(1)=\delta_1,~z'(0)=0,~z(1)=\delta_2,\\
\mbox{Four~point~Boundary~Condition:}&&y(0)=0,~y(1)=n_1z(v_1),~z(0)=0,~z(1)=n_2y(v_2),
\end{eqnarray*}
where $n_1$, $n_2$, $v_1$, $v_2$ $\in (0,1)$ and $k_1\geq 0$, $k_2\geq0$, $\omega_1<1$, $\omega_2<1$ are real constants. We developed three methods referred as Haar wavelet collocation approach for IVPs (HWCAIVP), Haar wavelet collocation approach for BVPs (HWCABVP) and Haar wavelet collocation approach for four point BVPs (HWCA4PTBVP). For each case we have considered two test examples. Convergence of the proposed methods are also proved. Results obtained in this paper are new and does not exist in literature. Calculations and plotting are performed on Mathematica 11.3 and Origin 8.

The paper is arranged in several sections. In section \ref{SecHaar} we  introduce about wavelets and Haar wavelets. Section \ref{Sec2PIVP} is devoted to nonlinear singular coupled system of IVPs. Section \ref{Sec2PBVP}  is devoted to nonlinear singular coupled system of IVPs. Section \ref{Sec4pBVP} is devoted to nonlinear singular coupled system of four point BVPs. Lastly in section \ref{remarkfinal} we conclude the paper with some important remarks.
\section{Preliminary}\label{SecHaar}
Wavelet word is used in the name of French geophysicist Jean Morlet (1931--2007). Morlet and Croatian-French physicist Alex Grossman developed the modern wavelet theory  \cite[p. 222]{MCPLAW2012}. Wavelets are multi-indexed and it has parameters which is used to shift or dilate/contract the functions giving us basis functions. The following properties of wavelets enable us to choose them over other methods \cite{MCPLAW2012, BN2009, MALLAT1989,GL2000}.
\begin{description}
\item[(H1)] Orthogonality.
\item[(H2)] Compact Support.
\item[(H3)] Density.
\item[(H4)] Localization Property: Haar basis is localized, i.e., the vector is zero except for a few entries.
\item[(H5)] Multiresolution Analysis (MRA).
\end{description}

\subsection*{MRA}
An orthogonal multiresolution analysis (MRA) is a collection of closed subspaces of $L^2(\mathbb{R})$ which are nested, having trivial intersection, they exhaust the space, the subspaces are connected to each other by scaling property and finally there is a special function, the scaling function $\varphi$, whose integer translates form an orthonormal basis for
one of the subspaces. We give formal statement of MRA as defined in \cite{MCPLAW2012}.
\begin{definition}
An MRA with scaling function $\varphi$ is a collection of closed subspaces $V_j , j = \dots,-2,-1,0,1,2,\dots$ of $L^2(\mathbb{R})$ such that
\begin{enumerate}
\item $\mathbf{V}_{j} \subset \mathbf{V}_{j+1}$.
\item $\overline{\bigcup\mathbf{V}_{j}}=\mathbf{L^2(\mathbb{R})}$.
\item $ \bigcap \mathbf{V}_{j}={0}$.
\item The function $f(x)$ belongs to $\mathbf{V}_{j}$ if and only if the function $f(2x) \in \mathbf{V}_{j+1}$.
\item The function $\varphi$ belongs to $\mathbf{V}_{0}$, the set $\{\varphi(x-k), k \in \mathbf{\mathbb{Z}}\}$ is orthonormal basis for $\mathbf{V}_{0}$.
 \end{enumerate}
\end{definition}
The sequence of wavelet subspaces $W_j$ of $L^2(\mathbb{R})$, are such that $V_j\perp Wj$ , for all $j$ and $V_{j+1} = V_j\oplus W_j$. Closure of $\oplus_{j\in \mathbb{Z}} W_j$ is dense in $L^2(\mathbb{R})$ with respect to $L^2$ norm.

Mallat's theorem \cite{MALLAT1989} guarantees that in presence of an orthogonal MRA, an orthonormal basis for $L^2(\mathbb{R})$ exists. 

\begin{theorem} {$\mathrm{(Mallat's~Theorem)}$}. Given an orthogonal MRA
with scaling function $\varphi$, there is a wavelet $\psi\in L^2(\mathbb{R})$ such that for
each $j\in \mathbb{Z}$, the family $\{\psi_{j,k}\}_{k\in\mathbb{Z}}$ is an orthonormal basis for $W_j$ .
Hence the family $\{\psi_{j,k}\}_{k\in\mathbb{Z}}$ is an orthonormal basis for $L^2(\mathbb{R})$. \label{Mallat}
\end{theorem}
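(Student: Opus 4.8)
The plan is to follow the classical Fourier-analytic construction of a wavelet from an orthogonal MRA. First I would use the nesting $V_0\subset V_1$ together with the fact that $\{\sqrt{2}\,\varphi(2x-k)\}_{k\in\mathbb{Z}}$ is an orthonormal basis of $V_1$ (which follows from axioms 4 and 5 of the MRA definition) to write the refinement equation $\varphi(x)=\sqrt{2}\sum_k h_k\,\varphi(2x-k)$ with $h_k=\langle\varphi,\sqrt{2}\,\varphi(2\cdot-k)\rangle$ and $\sum_k|h_k|^2=1$. Passing to the Fourier transform gives $\hat{\varphi}(\xi)=m_0(\xi/2)\,\hat{\varphi}(\xi/2)$, where $m_0(\xi)=\tfrac{1}{\sqrt{2}}\sum_k h_k e^{-ik\xi}$ is the $2\pi$-periodic low-pass symbol. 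Orthonormality of $\{\varphi(\cdot-k)\}$ is equivalent, by the Plancherel/periodization identity, to $\sum_n|\hat{\varphi}(\xi+2\pi n)|^2\equiv\tfrac{1}{2\pi}$, and substituting the refinement relation into this identity yields the quadrature-mirror-filter condition $|m_0(\xi)|^2+|m_0(\xi+\pi)|^2=1$ a.e.

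Next I would define the wavelet through its symbol: set $m_1(\xi)=e^{-i\xi}\,\overline{m_0(\xi+\pi)}$ and $\hat{\psi}(\xi)=m_1(\xi/2)\,\hat{\varphi}(\xi/2)$, equivalently $\psi(x)=\sqrt{2}\sum_k g_k\,\varphi(2x-k)$ with $g_k=(-1)^k\,\overline{h_{1-k}}$. Three computations then have to be carried out, each reducing to a $2\pi$-periodic identity in $m_0,m_1$: (i) $\psi\in V_1$ by construction; (ii) $\{\psi(\cdot-k)\}_k$ is orthonormal, which reduces to $|m_1(\xi)|^2+|m_1(\xi+\pi)|^2=1$, immediate from the definition of $m_1$ and the QMF condition; (iii) $\psi\perp V_0$, i.e.\ $\langle\psi,\varphi(\cdot-k)\rangle=0$ for all $k$, which reduces to $m_1(\xi)\overline{m_0(\xi)}+m_1(\xi+\pi)\overline{m_0(\xi+\pi)}\equiv0$, again immediate. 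Since $\psi\in V_1$ and $\psi\perp V_0$, we get $\psi\in W_0$, so $\{\psi(\cdot-k)\}_k$ is an orthonormal system inside $W_0$.

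The heart of the argument, and the step I expect to be the main obstacle, is completeness: showing that $\{\psi(\cdot-k)\}_k$ actually spans all of $W_0$ and not merely a proper closed subspace. For this I would take an arbitrary $f\in W_0\subset V_1$, write $\hat{f}(\xi)=m_f(\xi/2)\,\hat{\varphi}(\xi/2)$ for some $2\pi$-periodic $m_f$, translate the condition $f\perp V_0$ into $m_f(\xi)\overline{m_0(\xi)}+m_f(\xi+\pi)\overline{m_0(\xi+\pi)}=0$ a.e., and then exploit the fact that the $2\times2$ modulation matrix with rows $(m_0(\xi),m_0(\xi+\pi))$ and $(m_1(\xi),m_1(\xi+\pi))$ is unitary a.e.\ (this is exactly the content of the three identities above) to conclude that $m_f(\xi)=\nu(2\xi)\,m_1(\xi)$ for some $2\pi$-periodic $\nu$; unwinding this gives $f=\sum_k\nu_k\,\psi(\cdot-k)$ with $\sum_k|\nu_k|^2=\|f\|_2^2<\infty$. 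One subtlety to handle carefully is that $\hat{\varphi}$ may vanish on a set of positive measure, so the division $m_f/\hat{\varphi}$ must not be done pointwise; working instead with the periodized quantities and invoking unitarity of the modulation matrix circumvents this.

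Finally I would transfer everything across dyadic scales. The unitary dilation $f(x)\mapsto2^{j/2}f(2^jx)$ carries $V_0$ to $V_j$ and $W_0$ to $W_j$ by axiom 4, so $\{\psi_{j,k}\}_{k\in\mathbb{Z}}$ is an orthonormal basis of $W_j$ for every $j$. The MRA axioms give the orthogonal decomposition $V_{j+1}=V_j\oplus W_j$, hence $V_J=V_{J_0}\oplus\bigoplus_{j=J_0}^{J-1}W_j$; letting $J_0\to-\infty$ and $J\to+\infty$ and using $\bigcap_jV_j=\{0\}$ together with $\overline{\bigcup_jV_j}=L^2(\mathbb{R})$ yields $L^2(\mathbb{R})=\overline{\bigoplus_{j\in\mathbb{Z}}W_j}$ as an orthogonal direct sum. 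Combining this with the scale-by-scale orthonormal bases of the $W_j$ produces the asserted orthonormal basis $\{\psi_{j,k}\}_{j,k\in\mathbb{Z}}$ of $L^2(\mathbb{R})$.
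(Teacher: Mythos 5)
The paper does not actually prove this theorem: it is quoted as a background result in the preliminaries, with the proof deferred entirely to the citation of Mallat's original article, so there is no in-paper argument to compare yours against. That said, your sketch is the standard and correct Fourier-analytic construction (refinement equation, QMF identity $|m_0(\xi)|^2+|m_0(\xi+\pi)|^2=1$, high-pass symbol $m_1(\xi)=e^{-i\xi}\overline{m_0(\xi+\pi)}$, unitarity of the $2\times 2$ modulation matrix, and the telescoping $V_{j+1}=V_j\oplus W_j$ combined with the density and trivial-intersection axioms), and you correctly identify the completeness of $\{\psi(\cdot-k)\}_k$ in $W_0$ as the genuinely nontrivial step and handle it the right way, i.e.\ by working with the periodic symbol $m_f$ of $f\in W_0\subset V_1$ rather than dividing by $\hat{\varphi}$ pointwise. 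Two small points if you were to write this out in full: your formula $g_k=(-1)^k\overline{h_{1-k}}$ differs from the one produced by your own $m_1$ by an overall sign (harmless, since $-\psi$ generates the same orthonormal basis, but worth being consistent); and in the completeness step the scalar $\lambda(\xi)$ relating $(m_f(\xi),m_f(\xi+\pi))$ to $(m_1(\xi),m_1(\xi+\pi))$ should be defined as the inner product of the former vector with the conjugate of the latter, which makes its measurability and $\pi$-periodicity automatic without having to worry about zero sets of $m_1$.
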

\subsection{Haar Matrices} Excellent treatment is given on applications of Haar Wavelets in \cite{HaarLepik2014} by Lepik.  Let us consider the interval $0\leq x \leq 1$. Let us define $M=2^J$,where $J$ is maximum level of resolution. Let us divide $[0,1]$ into $2M$ subintervals of equal length $\Delta x=\frac{1}{2M}$. The wavelet number $i$ is calculated by $i=m+k+1$, here $j=0,1,\cdots,J$ and $k=0,1,\cdots,m-1$ (here $m=2^j$). The Haar wavelet's mother wavelet function is defined as \cite[pp.7-10]{HaarLepik2014} ,
\begin{equation} \label{p2eq1}
h_i(x)=
\begin{cases} 
     1,  & {\eta_1(i)\leq x< \eta_2(i)}, \\
    -1,  & {\eta_2(i)\leq x< \eta_3(i)}, \\
     0,  & \quad \text{else},
   \end{cases}
\end{equation}
where,
\begin{equation} \label{p2eq2} \quad \eta_1(i)=2k \mu \Delta x , \quad \eta_2(i)=(2k+1) \mu \Delta x , \quad \eta_3(i)=2(k+1) \mu \Delta x , \quad \mu =\frac{M}{m},
\end{equation} \newline
if $i>2$. For $i=1$, it is defined as,
\begin{equation} \label{p2eq3}
h_1(x)=
\begin{cases} 
     1,  & {0 \leq x \leq 1}, \\
      0, & \quad \text{else}.
   \end{cases}
\end{equation}
For $i=2$, 
\begin{equation} \label{p2eq4}
\quad \eta_1(2)=0 , \quad \eta_2(2)=0.5 , \quad \eta_3(2)=1.
\end{equation}
The width of the $i^{th}$ wavelet is,
\begin{center}
{$\eta_3(i)-\eta_1(i)= 2 \mu \Delta x = 2^{-j}$}.
\end{center}
The integral $P_{v,i}(x)$ is defined as,
\begin{equation} \label{p2eq5}
P_{v,i}(x)=\int^x_0 \int^x_0 \cdots \int^x_0 h_i(t) dt^v = \frac{1}{(v-1)!} \int^x_0 (x-t)^{v-1} h_i(t) dt,
\end{equation}
here $v$ is the order of integration. \newline
Using \eqref{p2eq1}, we will calculate these integrals analytically and by doing it we obtain,
\begin{equation} \label{p2eq6}
P_{v,i}(x)=
\begin{cases} 
     0,  & {x< \eta_1(i)}, \\
    \frac{1}{v!}[x-\eta_1(i)]^v,  & {\eta_1(i)\leq x \leq \eta_2(i)}, \\
    \frac{1}{v!}{[x-\eta_1(i)]^v - 2[x-\eta_2(i)]^v},  & {\eta_2(i)\leq x \leq \eta_3(i)},\\
      \frac{1}{v!}{[x-\eta_1(i)]^v - 2[x-\eta_2(i)]^v + [x-\eta_3(i)]^v},  & { x > \eta_3(i)},
   \end{cases}
\end{equation} 
for $i>1$. For $i=1$ we have $\eta_1=0,\eta_2=\eta_3=1$ and,
\begin{equation} \label{p2eq7}
P_{v,1}(x)=\frac{x^v}{v!}.
\end{equation}
For computation by using Haar wavelet, we make use of the method of collocation. Here, the grid points are defined by,
\begin{equation} \label{p2eq8}
\tilde x_c=c \Delta x,\quad c=0,1,\cdots,2M,
\end{equation}
and collocation points are defined by,
\begin{equation} \label{p2eq9}
x_c=0.5(\tilde x_{c-1}+\tilde{x_c}),\quad c=1,\cdots,2M,
\end{equation}
and replace $x \rightarrow x_c$ in \eqref{p2eq1}, \eqref{p2eq6}, \eqref{p2eq7}.

For computational point of view, we introduce the Haar matrices $H,P_1,P_2,\cdots ,P_v$. The order of these matrices are $2M \times 2M$. These matrices are defined by,
\begin{center}
$H(i,c)=h_i(x_c),\quad P_v(i,c)=p_{v,i}(x_c) ~~~v=1,2,\cdots.$
\end{center} 
For $J=1$, the matrices $H$, $P_1$  and $P_2$ are defined by,
\begin{center}
$H=
\begin{bmatrix}
1&1&1&1 \\ 1&1&-1&-1 \\ 1&-1&0&0 \\ 0&0&1&-1
\end{bmatrix}$,~~ $P_1= \frac{1}{8} \begin{bmatrix}
1&3&5&7 \\ 1&3&3&1 \\ 1&1&0&0 \\ 0&0&1&1
\end{bmatrix}$,~~ $P_2= \frac{1}{128} \begin{bmatrix}
1&9&25&49 \\ 1&9&23&31 \\ 1&7&8&8 \\ 0&0&1&7
\end{bmatrix}.$ 
\end{center}

\section{System of Nonlinear Singular Two Point IVP}\label{Sec2PIVP}

In this section we develop the solution method for system of nonlinear singular two point initial value problems. We will study some examples based on it.

\subsection{Method 1: HWCAIVP}\label{method2PIVP}

Consider the following singular nonlinear system of differential equations \eqref{p2eqn1}-\eqref{p2eqn2} subject to the initial conditions, 
\begin{equation}\label{p2eq12}
y(0)=\gamma_1,~y'(0)=0,
\end{equation} 
\begin{equation}\label{p2eq13}
z(0)=\gamma_2,~z'(0)=0,
\end{equation}
where $t>0$ and where $k_1\geq 0$, $k_2\geq0$, $\omega_1<1$, $\omega_2<1$, $\gamma_1,\gamma_2$ are real constants. .

\begin{theorem}
Consider the differential equations \eqref{p2eqn1}-\eqref{p2eqn2} with initial conditions \eqref{p2eq12}-\eqref{p2eq13}. Let us assume $f_1(t,y,z)$, $f_2(t,y,z)$ be continuous functions in $t$. Let $y(t)$ and $z(t)$ be the solutions of the differential equations \eqref{p2eqn1}-\eqref{p2eqn2} subject to the initial conditions \eqref{p2eq12}-\eqref{p2eq13}. Then the numerical solutions $y(t)$ and $z(t)$ for differential equations \eqref{p2eqn1}-\eqref{p2eqn2} using Haar wavelet method are given as follows,
\begin{equation*}
y(t)=\gamma_1+\sum_{i=1}^{2M} a_i P_{2,i}(t),
\end{equation*}
\begin{equation*}
z(t)=\gamma_2+\sum_{i=1}^{2M} b_i P_{2,i}(t).
\end{equation*}
\end{theorem}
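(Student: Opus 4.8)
The plan is to obtain the stated representation by expanding the highest-order derivatives of $y$ and $z$ in the (truncated) Haar system and then integrating twice, using the initial data \eqref{p2eq12}--\eqref{p2eq13} to fix the two constants of integration in each equation. Concretely, first write $y''(t)=\sum_{i=1}^{\infty}a_i h_i(t)$ and $z''(t)=\sum_{i=1}^{\infty}b_i h_i(t)$; this is legitimate because $f_1,f_2$ continuous (together with $\omega_1,\omega_2<1$, $k_1,k_2\ge0$, which force the right-hand sides to be integrable near $t=0$) makes $y'',z''$ lie in an appropriate $L^2$ (or weighted $L^2$) space, so by the density property (H3) and Mallat's theorem (Theorem \ref{Mallat}) the Haar family is a basis. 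Truncating at the maximal resolution level $J$ leaves the finite sums $y''(t)\approx\sum_{i=1}^{2M}a_i h_i(t)$ and $z''(t)\approx\sum_{i=1}^{2M}b_i h_i(t)$ with $M=2^J$.

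Next I would integrate once from $0$ to $t$ and use the definition \eqref{p2eq5} of $P_{v,i}$, which gives $y'(t)=y'(0)+\sum_{i=1}^{2M}a_i P_{1,i}(t)$; by \eqref{p2eq12}, $y'(0)=0$, so the constant term drops and $y'(t)=\sum_{i=1}^{2M}a_i P_{1,i}(t)$. Integrating a second time and using $y(0)=\gamma_1$ from \eqref{p2eq12} yields $y(t)=\gamma_1+\sum_{i=1}^{2M}a_i P_{2,i}(t)$, and the identical two integrations applied with \eqref{p2eq13} give $z(t)=\gamma_2+\sum_{i=1}^{2M}b_i P_{2,i}(t)$, which is exactly the claimed form. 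For completeness one notes that the coefficients $a_i,b_i$ are then determined by substituting these expressions, along with $y'(t)=\sum a_i P_{1,i}(t)$ and $y''(t)=\sum a_i h_i(t)$, into \eqref{p2eqn1}--\eqref{p2eqn2} through the identity $(t^{k_1}y')'=k_1 t^{k_1-1}y'+t^{k_1}y''$, collocating at the points $x_c$ of \eqref{p2eq9} and solving the resulting $4M\times4M$ nonlinear algebraic system by Newton--Raphson.

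The one step I would handle with care, rather than as pure routine, is the behaviour at the origin: the factors $t^{-\omega_1}$, $t^{-\omega_2}$ and $t^{k_1-1}$, $t^{k_2-1}$ can be singular at $t=0$, so the passage from "solution of the ODE" to "$y''$ expandable in the Haar basis" must be justified using precisely the hypotheses $\omega_1,\omega_2<1$ and $k_1,k_2\ge0$. The collocation stage is unaffected, since the nodes $x_c=0.5(\tilde x_{c-1}+\tilde x_c)$ are strictly positive and the formulas \eqref{p2eq5} for $P_{2,i}$ are smooth; everything beyond the integrability check is the standard "integrate the Haar series, match the initial conditions" computation.
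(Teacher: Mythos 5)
Your proposal is correct and follows essentially the same route as the paper: assume a truncated Haar expansion of $y''$ and $z''$, integrate twice using \eqref{p2eq5}, and apply the initial conditions \eqref{p2eq12}--\eqref{p2eq13} to eliminate the constants of integration, with the coefficients then fixed by collocation and Newton--Raphson. Your additional remarks on justifying the expansion of $y''$, $z''$ near the singular origin via $\omega_1,\omega_2<1$ and $k_1,k_2\ge 0$ go slightly beyond what the paper writes down, but they do not change the argument.
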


\begin{proof}
Assume (\cite{HaarLepik2008}),
\begin{equation}\label{p2eq14}
y''(t)=\sum_{i=1}^{2M} a_i h_i(t),
\end{equation}
\begin{equation}\label{p2eq15}
z''(t)=\sum_{i=1}^{2M} b_i h_i(t),
\end{equation}
where $a_i$, $b_i$ are wavelet coefficients. Now integrate above equations from $0$ to $t$ twice we get,
\begin{equation}\label{p2eq16}
y'(t)=\sum_{i=1}^{2M} a_i P_{1,i}(t) + y'(0),
\end{equation}
\begin{equation}\label{p2eq17}
y(t)=\sum_{i=1}^{2M} a_i P_{2,i}(t) + ty'(0)+ y(0).
\end{equation}
\begin{equation}\label{p2eq18}
z'(t)=\sum_{i=1}^{2M} b_i P_{1,i}(t) + z'(0),
\end{equation}
\begin{equation}\label{p2eq19}
z(t)=\sum_{i=1}^{2M} b_i P_{2,i}(t) + tz'(0)+ z(0).
\end{equation}
Now apply initial conditions \eqref{p2eq12}-\eqref{p2eq13} in \eqref{p2eq16}-\eqref{p2eq19} and we get,
\begin{equation}\label{p2eq20}
y'(t)=\sum_{i=1}^{2M} a_i P_{1,i}(t),
\end{equation} 
\begin{equation}\label{p2eq21}
y(t)=\gamma_1+\sum_{i=1}^{2M} a_i P_{2,i}(t),
\end{equation} 
\begin{equation}\label{p2eq22}
z'(t)=\sum_{i=1}^{2M} b_i P_{1,i}(t),
\end{equation} 
\begin{equation}\label{p2eq23}
z(t)=\gamma_2+\sum_{i=1}^{2M} b_i P_{2,i}(t).
\end{equation} 
\end{proof}

Now substitute these equations \eqref{p2eq14}-\eqref{p2eq23} in \eqref{p2eqn1}-\eqref{p2eqn2} after discretizing by collocation method, we get the system of nonlinear equations given as,
\begin{equation} \label{p2eq24}
\Phi^{IVP}_c (a_1,a_2,\cdots,a_{2M})=0, ~~~c=1,2,\cdots,2M,
\end{equation} 
\begin{equation} \label{p2eq25}
\Psi^{IVP}_c (b_1,b_2,\cdots,b_{2M})=0, ~~~c=1,2,\cdots,2M.
\end{equation}
The above system of nonlinear equations is solved by applying Newton-Raphson method to get the wavelet coefficients $a_i$ and $b_i$ for numerical solution. Finally substituting the values of $a_i,b_i$ in equations \eqref{p2eq21} and \eqref{p2eq23}, we get the approximate solution obtained by HWCAIVP.
\subsection{Convergence Analysis for HWCAIVP}
Consider the following singular nonlinear system of differential equations, \eqref{p2eqn1}-\eqref{p2eqn2} subject to the initial conditions \eqref{p2eq12}-\eqref{p2eq13}. Here we define $M=2^J$, where is $J$ is maximum level of resolution. Let us write the approximate solutions as follows so that we have a clear distinction between exact and computed solution.
\begin{eqnarray*}
	&&y^{M} = \gamma_{1}+\sum_{0}^{2M}a_{i}p_{2,i}(t),\\
	&&|E_{M1}|=\left|\sum_{j=J+1}^{\infty}\sum_{k=0}^{2^j-1}a_{2^j+k+1}p_{2,2^j+k+1}(t)\right|,\\
	&&z^{M} =\gamma_{2}+\sum_{0}^{2M}a_{i}(p_{2,i}(t)),\\
	&&|E_{M2}|=\left|\sum_{j=J+1}^{\infty}\sum_{k=0}^{2^j-1}b_{2^j+k+1}p_{2,2^j+k+1}(t)\right|,
\end{eqnarray*}
where $|E_{M1}|=|y^{M}-y|$ and $|E_{M2}|=|z^{M}-z|$. Now, let us define 
\begin{eqnarray}\label{con1}
||E_M||_2=||E_{M1}||_2+||E_{M2}||_2.
\end{eqnarray}
\begin{theorem}\label{theoremivp}
	Consider the differential equation \eqref{p2eqn1}-\eqref{p2eqn2}  subject to the initial conditions \eqref{p2eq12}-\eqref{p2eq13} . Let us assume that  $y'''(t),z'''(t)\in L^2(\mathbb{R})$ are continuous functions on $[0,1]$. Consider $y'''_{r+1}(t)$, $z'''_{r+1}(t)$  are bounded such that $\left|y'''_{r+1}(t)\right|\leq\xi_1$ and $\left|z'''_{r+1}(t)\right|\leq\xi_2$\quad $\forall t \in[0,1]$. Let $\epsilon>0$ be arbitrary small positive number, and if $J>\log_2\left(\sqrt{\frac{\xi_1}{3\epsilon}}\right)-1$, then $||E_M||_2<\epsilon$. 
\end{theorem}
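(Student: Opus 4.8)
The plan is to trace the global error back to the truncation of the Haar expansions of $y''$ and $z''$ and then control it by a crude triangle‑inequality estimate over the discarded wavelet levels. First I would note that, by the construction of HWCAIVP, $y''=\sum_{i\ge1}a_ih_i(t)$ and $z''=\sum_{i\ge1}b_ih_i(t)$, and that integrating twice with $y'(0)=z'(0)=0$ gives $y=\gamma_1+\sum_{i\ge1}a_iP_{2,i}(t)$ and $z=\gamma_2+\sum_{i\ge1}b_iP_{2,i}(t)$, while the computed solutions $y^M,z^M$ are exactly the partial sums up to $i=2M=2^{J+1}$. Hence $E_{M1}=y-y^M=\sum_{j=J+1}^{\infty}\sum_{k=0}^{2^j-1}a_{2^j+k+1}P_{2,2^j+k+1}(t)$, and likewise for $E_{M2}$, so that $\|E_{M1}\|_2\le\sum_{j=J+1}^{\infty}\sum_{k=0}^{2^j-1}|a_{2^j+k+1}|\,\|P_{2,2^j+k+1}\|_2$. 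The whole estimate then reduces to two ingredients: an upper bound on the wavelet coefficients of $y''$ and $z''$, and an upper bound on $\|P_{2,i}\|_2$.

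For the coefficients I would use $\int_0^1 h_i^2=2^{-j}$ for $i=2^j+k+1$, so that $a_i=2^j\int_0^1 y''(t)h_i(t)\,dt=2^j\big(\int_{\eta_1(i)}^{\eta_2(i)}y''\,dt-\int_{\eta_2(i)}^{\eta_3(i)}y''\,dt\big)$; since the two half‑supports have equal length $\mu\Delta x=2^{-j-1}$, the first mean value theorem for integrals rewrites this as $2^j\,\mu\Delta x\,[y''(\zeta_1)-y''(\zeta_2)]$ with $\zeta_1<\zeta_2$ in $[\eta_1(i),\eta_3(i)]$, and Lagrange's theorem together with the boundedness hypothesis $|y'''|\le\xi_1$ gives $|a_i|\le 2^j\,\mu\Delta x\,(\eta_3(i)-\eta_1(i))\,\xi_1=2^{-j-1}\xi_1$; the same argument gives $|b_i|\le 2^{-j-1}\xi_2$. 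For $\|P_{2,i}\|_2$, the explicit case‑wise formula \eqref{p2eq6} shows $P_{2,i}\ge0$, that it is non‑decreasing (its derivative $P_{1,i}$ is nonnegative), and that it is constant equal to $(\mu\Delta x)^2=4^{-j-1}$ on $[\eta_3(i),1]$; hence $0\le P_{2,i}(t)\le 4^{-j-1}$ on $[0,1]$ and therefore $\|P_{2,i}\|_2\le 4^{-j-1}$.

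Assembling the two bounds, $\|E_{M1}\|_2\le\sum_{j=J+1}^{\infty}2^j\cdot 2^{-j-1}\xi_1\cdot 4^{-j-1}=\tfrac{\xi_1}{2}\sum_{j=J+1}^{\infty}4^{-j-1}=\tfrac{\xi_1}{6\cdot 4^{J+1}}$, and similarly $\|E_{M2}\|_2\le\tfrac{\xi_2}{6\cdot 4^{J+1}}$, so $\|E_M\|_2=\|E_{M1}\|_2+\|E_{M2}\|_2\le\tfrac{\xi_1+\xi_2}{6\cdot 4^{J+1}}\le\tfrac{\xi_1}{3\cdot 4^{J+1}}$, the last step using $\xi_2\le\xi_1$. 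Requiring $\tfrac{\xi_1}{3\cdot 4^{J+1}}<\epsilon$ is equivalent to $4^{J+1}>\xi_1/(3\epsilon)$, equivalently $2^{J+1}>\sqrt{\xi_1/(3\epsilon)}$, equivalently $J>\log_2\sqrt{\xi_1/(3\epsilon)}-1$, which is precisely the stated hypothesis; this proves $\|E_M\|_2<\epsilon$. Since $\epsilon>0$ was arbitrary the same chain gives convergence $\|E_M\|_2\to0$ as $J\to\infty$, and the explicit factor $4^{-(J+1)}=(\Delta x)^2$ exhibits the claimed second‑order rate.

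I expect the coefficient estimate to be the only genuinely delicate step: the mean value arguments have to be set up on the two equal half‑supports of $h_i$ so that the bound on $|a_i|$ is uniform in $k$ and decays like $2^{-j}$, since any slower decay would prevent the geometric series in $j$ from closing at the required rate. The $L^2$ bound on $P_{2,i}$ is easy but does depend on correctly reading off its maximum from the piecewise‑quadratic formula \eqref{p2eq6}. Everything else — the triangle inequality over the double tail, the geometric summation in $j$, and solving the final inequality for $J$ — is routine. One cosmetic point worth flagging in the write‑up is that the stated threshold involves $\xi_1$ only, which forces either the assumption $\xi_2\le\xi_1$ or a symmetric phrasing of the two error contributions.
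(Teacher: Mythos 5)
Your proof is correct and follows essentially the same route as the paper's: the mean-value-theorem bound $|a_i|\le \xi_1 2^{-(j+1)}$, the uniform bound $P_{2,i}(t)\le 4^{-(j+1)}$, and geometric summation over the discarded levels, arriving at the identical estimate $\|E_{M1}\|_2\le \tfrac{\xi_1}{6}\,4^{-(J+1)}$; the only cosmetic difference is that you apply the triangle inequality to the $L^2$ norm of the tail, whereas the paper expands the square of the norm into a double sum before inserting the same two bounds. Your closing observation that the threshold stated in terms of $\xi_1$ alone tacitly requires $\xi_2\le\xi_1$ (or a symmetric restatement) is a fair one and applies equally to the paper's own proof.
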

\begin{proof}
	We will do calculation for $E_{M1}$, for $E_{M2}$ it will follow accordingly.
	Expanding quadrate of $L^2$ norm of error function, we obtain
	\begin{eqnarray*}
		||E_{M1}||_{2}^2= \int_{0}^{1}{\left(\sum_{j=J+1}^{\infty}\sum_{k=0}^{2^j-1}a_{2^j+k+1}p_{2,2^j+k+1}(t)\right)}^2 dt.
	\end{eqnarray*}
	Then we have
	\begin{eqnarray}\label{bz1}
	||E_{M1}||_{2}^2 = \sum_{j=J+1}^{\infty}\sum_{k=0}^{2^j-1}\sum_{r=J+1}^{\infty}\sum_{s=0}^{2^r-1}a_{2^j+k+1}a_{2^r+s+1}\int_{0}^{1}(p_{2,2^j+k+1}(t)(p_{2,2^r+s+1}(t))dt.
	\end{eqnarray}
	By the use of definition of Haar wavelet function we can evaluate coefficients $a_{i}$ as
	\begin{eqnarray*}
		&&a_{i}=2^j\int_{0}^{1}y''(t)h_{i}(t)dt,\\
		&&a_{i}=2^j\left[\int_{\eta_{1}}^{\eta_{2}}y''(\zeta)d\zeta-\int_{\eta_{2}}^{\eta_{3}}y''(\zeta)d\zeta\right],\\
		&&a_{i}=2^j[(\eta_{2}-\eta_{1})y''(\zeta_{1})-(\eta_{3}-\eta_{2})y''(\zeta_{2})],\\
	\end{eqnarray*}
	where $\zeta_{1} \in (\eta_{1},\eta_{2})$ and $\zeta_{2} \in (\eta_{2},\eta_{3})$. It follows from Eq. \eqref{p2eq2} that $(\eta_{2}-\eta_{1})=(\eta_{3}-\eta_{2})=1/(2m)=1/(2^{j+1})$, then the above expression of $a_{i}$ reduces to
	\begin{eqnarray*}
		&&a_{i}=\frac{1}{2}[y''(\zeta_{1})-y''(\zeta_{2})]=\frac{1}{2}(\zeta_{1}-\zeta_{2})\frac{dy''}{dt}(\zeta),\:\:\:  \zeta \in (\zeta_{1},\zeta_{2}).
	\end{eqnarray*}
	Let us consider that $\left(\frac{dy''}{dt}\right)$ is bounded such that $\left|\frac{dy''}{dt}\right|\leq\xi_1$, so we have
	\begin{equation}\label{bz2}
	a_{i} \leq \xi_1\left(\frac{1}{2^{j+1}}\right).
	\end{equation}
	Here we will solve for upper bound of a function $p_{2,i}$ in all subintervals. Since $p_{2,i}(t)=0$ for $t \in [0,\eta_{1}(i)]$. The function $p_{2,i}(t)$ increases monotonically in the interval $t\in[\eta_{1}(i),\eta_{2}(i)]$. Thus $p_{2,i}(t)$ achieves its upper bound at $t= \eta_{2}(i)$ as follows
	\begin{eqnarray*}
		p_{2,i}=p_{2,2^j+k+1} \leq \frac{{[\eta_{2}(i)-\eta_{1}(i)]}^2}{2}=\frac{1}{2}\left(\frac{1}{2^{j+1}}\right)^2,\:\:\: t\in[\eta_{1}(i),\eta_{2}(i)].
	\end{eqnarray*}
	\noindent In the interval $t \in [\eta_{2}(i),\eta_{3}(i)]$ the function $p_{2,i}$ is monotonically increasing if
	\begin{eqnarray}\label{this1}
	t \leq \eta_{3},
	\end{eqnarray}
	which is obviously true.
	
	This inequality \eqref{this1} can be derived from formulas \eqref{p2eq2} and \eqref{p2eq6} and condition $\frac{dp_{2,i}(t)}{dt}>0$.
	Hence maximum value of $p_{2,i}(t)$ can be obtained by substituting $t=\eta_{3}(i)$ in eq. \eqref{p2eq6} as
	\begin{eqnarray*}
		p_{2,i}(t)=p_{2,2^j+k+1}\leq\left(\frac{1}{2^{j+1}}\right)^2,\:\:\:t\in[\eta_{2}(i),\eta_{3}(i)].
	\end{eqnarray*}
	When $t \in [\eta_{3},1]$ the function $p_{2,i}(t)$ can be expanded as (by eq. \eqref{p2eq6} (see \cite{VF2001})
	\begin{eqnarray*}
		p_{2,i}(t)=\left(\frac{1}{2^{j+1}}\right)^2.
	\end{eqnarray*}
	The function $p_{2,i}(t)$ increases monotonically in $[0,1]$, since it increases monotonically in every sub interval of $[0,1]$. So upper bound of $p_{2,i}(t)$ in $[0,1]$ is given by
	\begin{eqnarray}\label{d1}
	p_{2,i}(t)\leq \left(\frac{1}{2^{j+1}}\right)^2 \:\:\: \forall t \in[0,1] .
	\end{eqnarray}
	Now inserting eq. \eqref{bz2} in eq. \eqref{bz1} we get
	\begin{eqnarray}\label{f1}
	||E_{M1}||_{2}^2 \leq {\xi_1}^2 \sum_{j=J+1}^{\infty}\sum_{k=0}^{2^j-1}\sum_{r=J+1}^{\infty}\sum_{s=0}^{2^r-1}\frac{1}{2^{r+j+2}}\int_{0}^{1}\left(p_{2,2^j+k+1}(t)\right)\left(p_{2,2^r+s+1}(t)\right)dt.
	\end{eqnarray}
	Here we have
	\begin{eqnarray}\label{h1}
	p_{2,2^j+k+1}(t)\leq\left(\frac{1}{2^{j+1}}\right)^2, \quad p_{2,2^r+s+1}(t) \leq \left (\frac{1}{2^{r+1}}\right)^2\:\:\forall t \in [0,1].
	\end{eqnarray}
	Inserting eq. \eqref{h1} in eq. \eqref{f1} we get
	\begin{eqnarray*}
		&&||E_{M1}||_{2}^2 \leq {\xi_1}^2 \sum_{j=J+1}^{\infty}\sum_{r=J+1}^{\infty}\left(\frac{1}{2^{j+1}}\right)^3\left(\frac{1}{2^{r+1}}\right)^3 2^j2^r(1-\eta_{1}),
		\\&&||E_{M1}||_{2}^2 \leq \frac{1}{36}\xi_1^2\left(\frac{1}{2^{J+1}}\right)^4.
	\end{eqnarray*}
	Hence
	\begin{eqnarray*}
		&&||E_{M1}||_{2} \leq \frac{1}{6}\xi_1\left(\frac{1}{2^{J+1}}\right)^2.
	\end{eqnarray*}
	Let $\epsilon>0$ be arbitrary small positive number and if we choose $$J>\log_2\sqrt{\frac{\xi_1}{3\epsilon}}-1,$$ then $$||E_{M1}||_{2} \leq \frac{\epsilon}{2}.$$
	Similarly following the same process for $z^M$, we can prove that $$||E_{M2}|<\frac{\epsilon}{2}.$$ 
	Hence, $$ ||E_{M}||_2<\epsilon$$ whenever resolution $J>\log_2\sqrt{\frac{\xi_1}{3\epsilon}}-1$.
\end{proof}

\subsection{Numerical Illustration for HWCAIVP}

In this section we will discuss two numerical problems based on system of nonlinear singular two point initial value problem considered by Wazwaz et al. \cite{Wazwaz2013ADM}.

\subsubsection{Example 1 (\cite{Wazwaz2013ADM})} \label{Problem1}

Consider \eqref{p2eqn1}-\eqref{p2eqn2} subject to initial conditions \eqref{p2eq12}-\eqref{p2eq13} with $\gamma_1=1$, $\gamma_2=1$, $k_1=3$, $k_2=2$, $\omega_1=-3$, $\omega_2=-2$, $f_1(t,y(t),z(t))=-4(y+z)$ and  $f_2(t,y(t),z(t))=3(y+z)$. Now applying solution method (subsection  \ref{method2PIVP}) to solve this problem, we get system of non-linear equations. Thus we arrive at \eqref{p2eq24}-\eqref{p2eq25}.  To solve the non-linear equations we use Newton Raphson method to calculate wavelet coefficients ($a_i$) and ($b_i$). After calculating wavelet coefficients ($a_i$) and ($b_i$) we get our required solution from equations \eqref{p2eq21} and \eqref{p2eq23}. The exact solution $\tilde{y}(t)$ and $\tilde{z}(t)$ of this problem are $1+t^2$ and $1-t^2$, respectively.

Here we are computing absolute error. Absolute error is defined as the difference of exact value and approximated value. Maximum error is defined as,
\begin{equation*}
L^\infty =\max_{i}\abs{\tilde{y}(t_i)-y(t_i)},
\end{equation*}
where $\tilde{y}(t)$ is the exact solution and $y(t_i)$ is HWCAIVP solution.

For initial guess $[1,1,\cdots,1]$ computed solutions for $y^M(t)$ and $z^M(t)$ is given in Table \ref{p2Table1} and Table \ref{p2Table2}, respectively for $J=3$ and $J=4$. Graph for $y^M(t)$ and $z^M(t)$ with exact solution is given in figure \ref{p2fig1} and figure \ref{p2fig2} respectively for $J=4$. Graph of absolute errors in computation of $y^M(t)$ and $z^M(t)$ is given in figure \ref{p2fig3} and figure \ref{p2fig4} respectively for $J=1$, $J=2$, $J=3$ and $J=4$.

For slight modification in initial guesses, say, $[1.01,1.01,\cdots,1.01]$ and $[1.1,1.1,\cdots,1.1]$ final solution is not affected at all, which shows the stability of the method.

\begin{table}[H]											 
\centering											
\begin{center}											
\resizebox{7cm}{2.5cm}{											
\begin{tabular}	{|c | c|  c|  c|}		
\hline
$t$	&	$J=3$	&	$J=4$	&	Exact Solution	\\\hline
0	&	1	&	1	&	1	\\\hline
0.1	&	1.01	&	1.01	&	1.01	\\\hline
0.2	&	1.04	&	1.04	&	1.04	\\\hline
0.3	&	1.09	&	1.09	&	1.09	\\\hline
0.4	&	1.16	&	1.16	&	1.16	\\\hline
0.5	&	1.25	&	1.25	&	1.25	\\\hline
0.6	&	1.36	&	1.36	&	1.36	\\\hline
0.7	&	1.49	&	1.49	&	1.49	\\\hline
0.8	&	1.64	&	1.64	&	1.64	\\\hline
0.9	&	1.81	&	1.81	&	1.81	\\\hline
1	&	2	&	2	&	2	\\\hline
$L^\infty$	&	0	&	0	&		\\\hline

\end{tabular}}								
\end{center}
\caption{\small{Solutions $y^M(t)$ for $J=3$ and $J=4$ for example \ref{Problem1}. }}	
\label{p2Table1}											
\end{table}

\begin{table}[H]											 
\centering											
\begin{center}											
\resizebox{7cm}{2.5cm}{											
\begin{tabular}	{|c | c|  c|  c|}		
\hline
$t$	&	$J=3$	&	$J=4$	&	Exact Solution	\\\hline
0	&	1	&	1	&	1	\\\hline
0.1	&	0.99	&	0.99	&	0.99	\\\hline
0.2	&	0.96	&	0.96	&	0.96	\\\hline
0.3	&	0.91	&	0.91	&	0.91	\\\hline
0.4	&	0.84	&	0.84	&	0.84	\\\hline
0.5	&	0.75	&	0.75	&	0.75	\\\hline
0.6	&	0.64	&	0.64	&	0.64	\\\hline
0.7	&	0.51	&	0.51	&	0.51	\\\hline
0.8	&	0.36	&	0.36	&	0.36	\\\hline
0.9	&	0.19	&	0.19	&	0.19	\\\hline
1	&	0	&	0	&	0	\\\hline
$L^\infty$	&	0	&	0	&		\\\hline

\end{tabular}}								
\end{center}
\caption{\small{Solutions $z^M(t)$ for $J=3$ and $J=4$ for example \ref{Problem1}.}}	
\label{p2Table2}											
\end{table}

\begin{figure}\centering
        \begin{subfigure}[b]{0.45\textwidth}
             \includegraphics[scale=0.30]{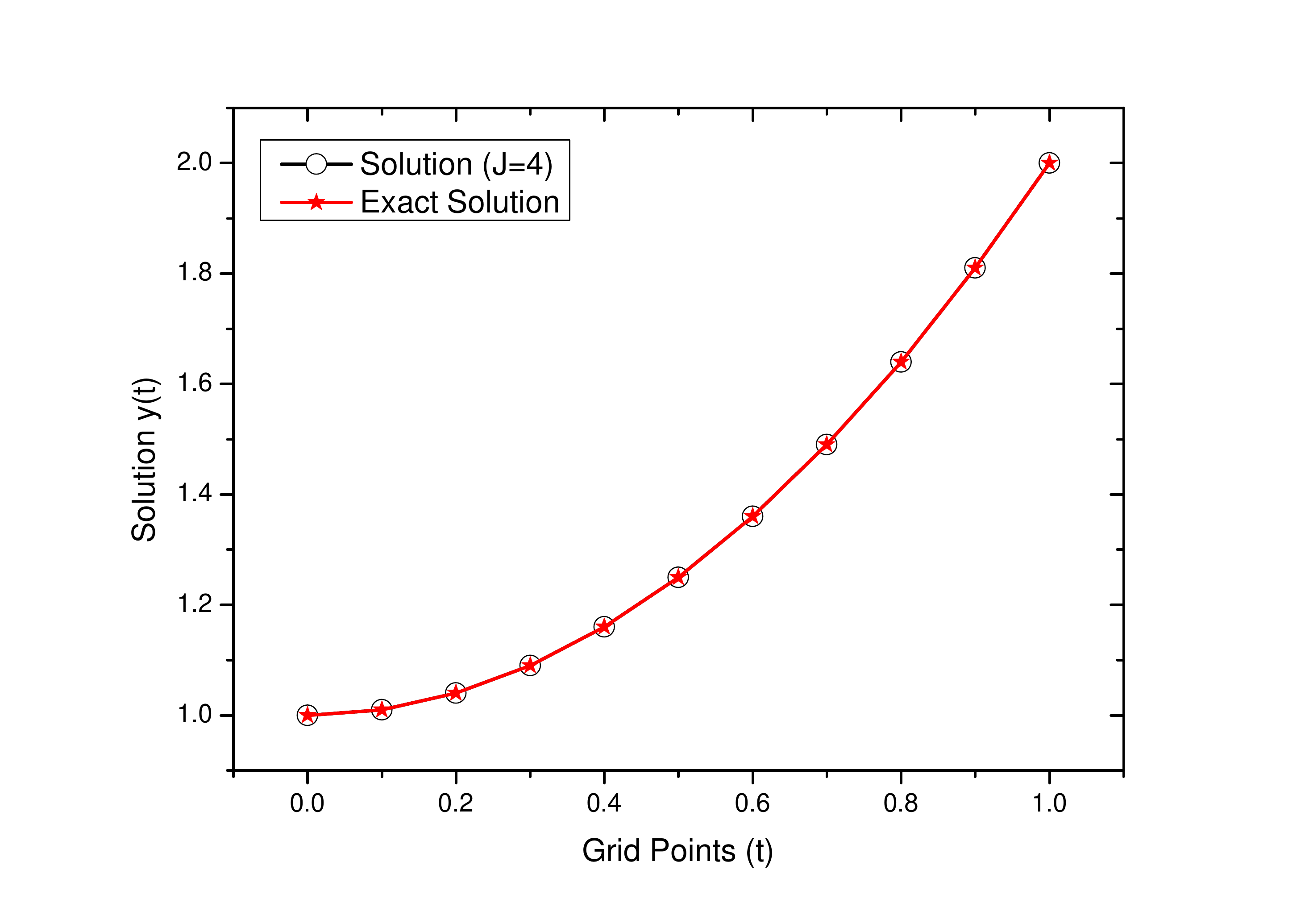}
\subcaption{Graph of $y^M(t)$ for $J=4$  for example \ref{Problem1}.}\label{p2fig1} 
        \end{subfigure}\hspace{0.5in}
        \begin{subfigure}[b]{0.45\textwidth}
\includegraphics[scale=0.30]{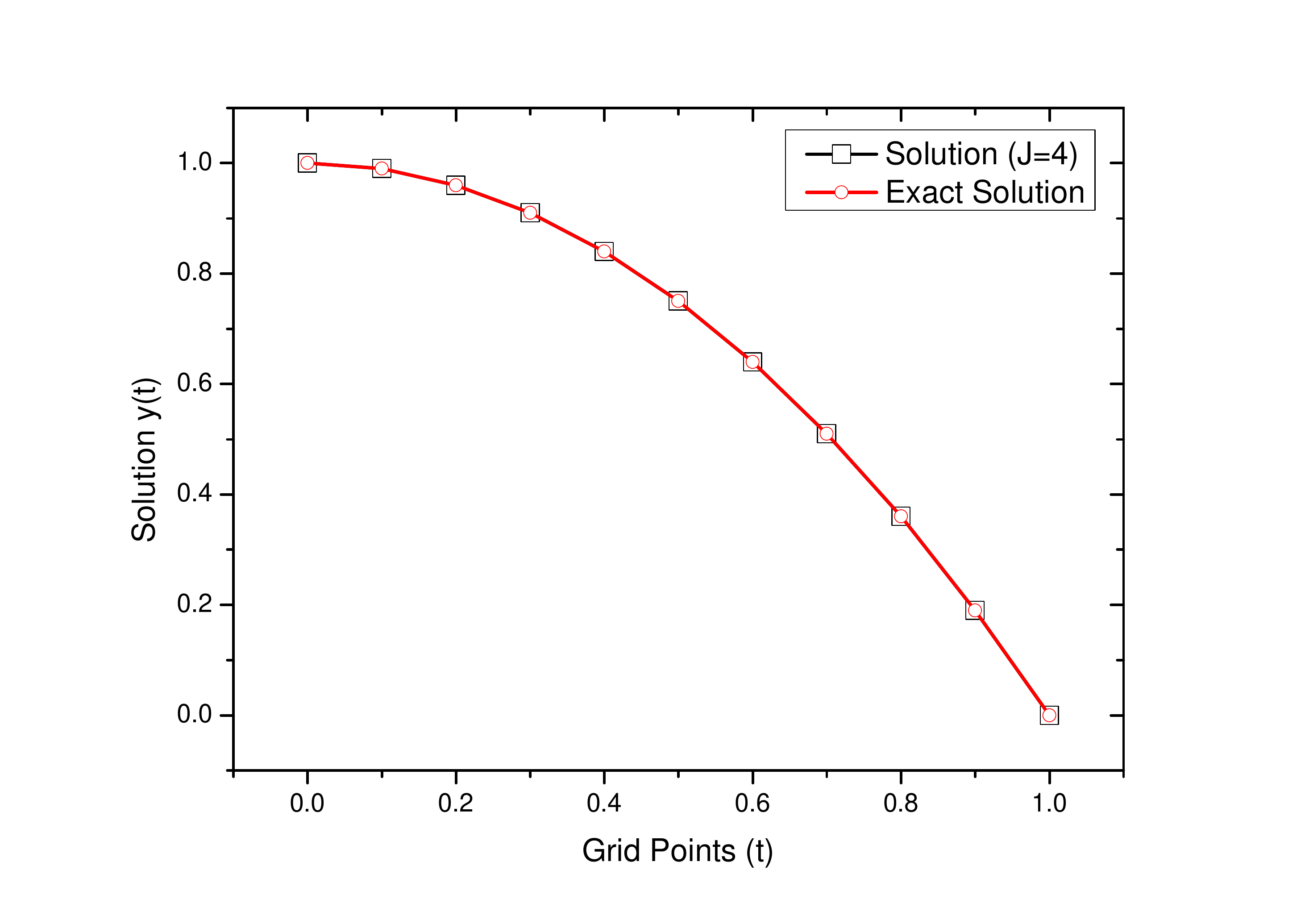}
\subcaption{Graph of $z^M(t)$ for $J=4$  for example \ref{Problem1}.}\label{p2fig2}
        \end{subfigure}%
\caption{}\label{ivpfig1}
\end{figure}

\begin{figure}\centering
        \begin{subfigure}[b]{0.45\textwidth}
\includegraphics[scale=0.3]{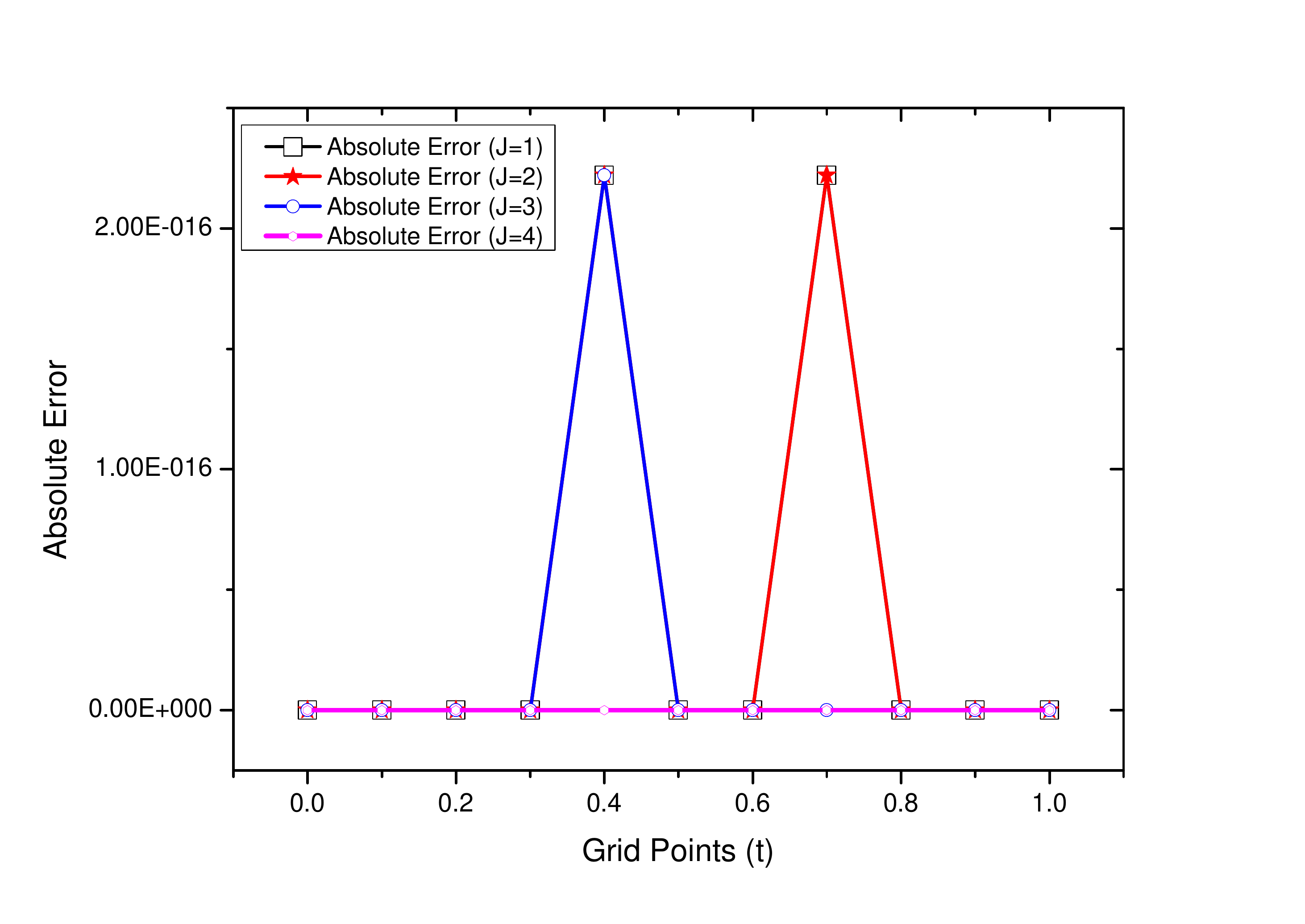}
\subcaption{Graph of absolute errors in $y^M(t)$  for example \ref{Problem1}.}\label{p2fig3}
        \end{subfigure}\hspace{0.5in}
        \begin{subfigure}[b]{0.45\textwidth}
\includegraphics[scale=0.3]{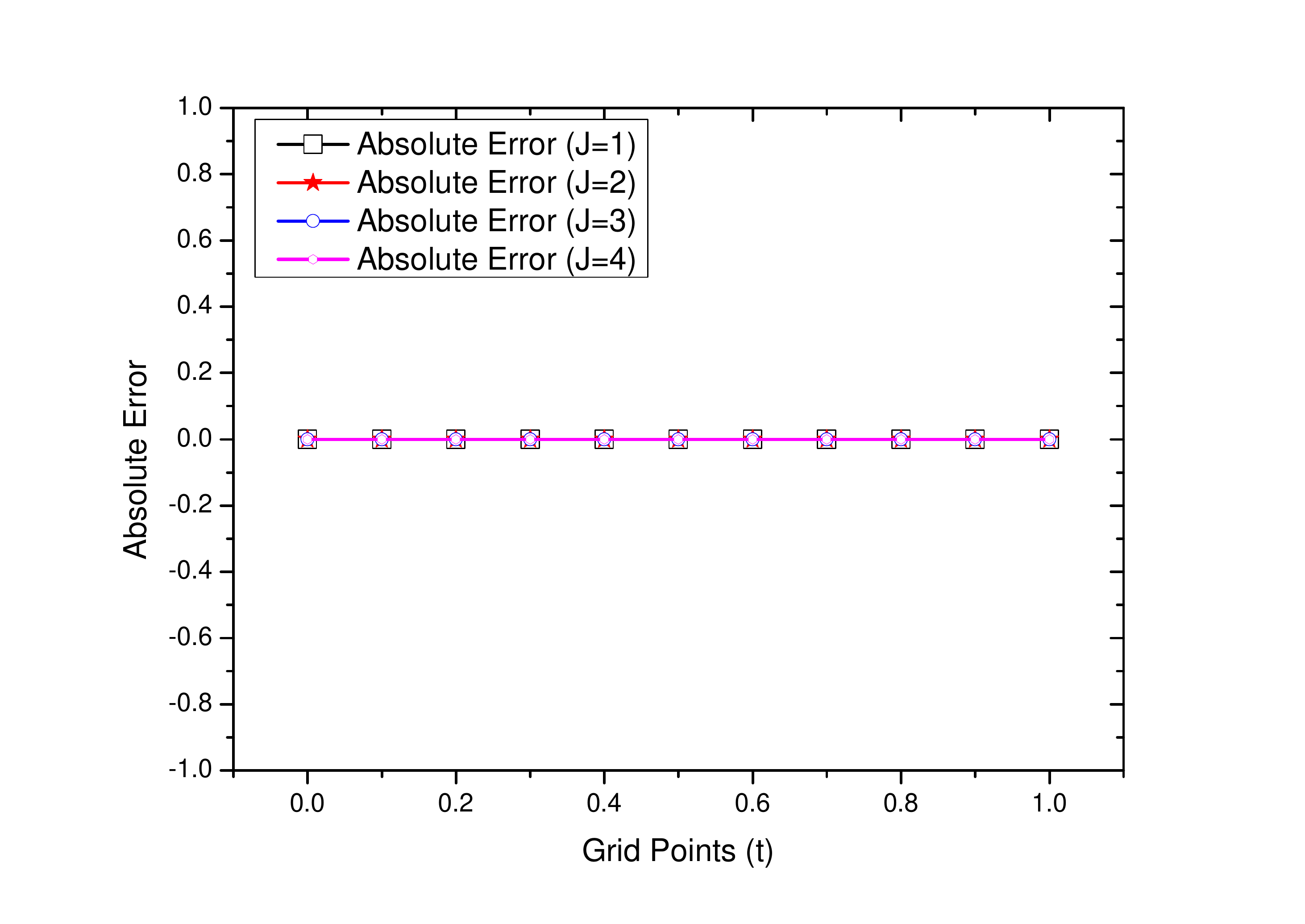}
\subcaption{Graph of absolute errors in $z^M(t)$  for example \ref{Problem1}.}\label{p2fig4}
        \end{subfigure}%
\caption{}\label{ivpfig2}
\end{figure}

\subsubsection{Example 2 (\cite{Wazwaz2013ADM})} \label{Problem2}

Consider the system of differential equations \eqref{p2eqn1}-\eqref{p2eqn2} subject to initial conditions \eqref{p2eq12}-\eqref{p2eq13}  with $\gamma_1=1$, $\gamma_2=1$, $k_1=1$, $k_2=3$, $\omega_1=-1$, $\omega_2=-3$, $f_1(t,y(t),z(t))=-z^3 (y^2+1)$ and  $f_2(t,y(t),z(t))=z^5 (y^2+3)$. The exact solution $\tilde{y}(t)$ and $\tilde{z}(t)$ of this problem are $\sqrt{1+t^2}$ and $\frac{1}{\sqrt{1+t^2}}$ respectively.

Since this problem is same as subsubsection $\ref{Problem1}$. Computed solutions for $y^M(t)$ and $z^M(t)$ is given in Table \ref{p2Table3} and Table \ref{p2Table4}, respectively for $J=3$ and $J=4$ taking initial guess $[0,0,\cdots,0]$. Graph for $y^M(t)$ and $z^M(t)$ with exact solution is given in figure \ref{p2fig5} and figure \ref{p2fig6} respectively for $J=4$. Graph of absolute errors in computation of $y^M(t)$ and $z^M(t)$ is given in figure \ref{p2fig7} and figure \ref{p2fig8} respectively for $J=1$, $J=2$, $J=3$ and $J=4$.

For slight modification in initial guesses, say, $[0.01,0.01,\cdots,0.01]$ and $[0.1,0.1,\cdots,0.1]$ final solution is not affected at all, which shows the stability of the method.

\begin{table}[H]											 
\centering											
\begin{center}											
\resizebox{7cm}{2.5cm}{											
\begin{tabular}	{|c | c|  c|  c|}		
\hline
$t$	&	$J=3$	&	$J=4$	&	Exact Solution	\\\hline
0	&	1	&	1	&	1	\\\hline
0.1	&	1.00499	&	1.00499	&	1.00499	\\\hline
0.2	&	1.0198	&	1.0198	&	1.0198	\\\hline
0.3	&	1.04403	&	1.04403	&	1.04403	\\\hline
0.4	&	1.07703	&	1.07703	&	1.07703	\\\hline
0.5	&	1.11803	&	1.11803	&	1.11803	\\\hline
0.6	&	1.16619	&	1.16619	&	1.16619	\\\hline
0.7	&	1.22065	&	1.22065	&	1.22066	\\\hline
0.8	&	1.28061	&	1.28062	&	1.28062	\\\hline
0.9	&	1.34535	&	1.34536	&	1.34536	\\\hline
1	&	1.41419	&	1.41421	&	1.41421	\\\hline
$L^\infty$	&	2.76016E-05	&	6.87997E-06	&		\\\hline

\end{tabular}}								
\end{center}
\caption{\small{Solution $y^M(t)$ for $J=3$ and $J=4$ for example \ref{Problem2}.}}	
\label{p2Table3}											
\end{table}

\begin{table}[H]											 
\centering											
\begin{center}											
\resizebox{7cm}{2.5cm}{											
\begin{tabular}	{|c | c|  c|  c|}		
\hline
$t$	&	$J=3$	&	$J=4$	&	Exact Solution	\\\hline
0	&	1	&	1	&	1	\\\hline
0.1	&	0.995035	&	0.995036	&	0.995037	\\\hline
0.2	&	0.980564	&	0.980577	&	0.980581	\\\hline
0.3	&	0.9578	&	0.957819	&	0.957826	\\\hline
0.4	&	0.928424	&	0.928465	&	0.928477	\\\hline
0.5	&	0.894355	&	0.894409	&	0.894427	\\\hline
0.6	&	0.857402	&	0.85747	&	0.857493	\\\hline
0.7	&	0.819121	&	0.819205	&	0.819232	\\\hline
0.8	&	0.780748	&	0.780839	&	0.780869	\\\hline
0.9	&	0.743164	&	0.743262	&	0.743294	\\\hline
1	&	0.706975	&	0.707074	&	0.707107	\\\hline
$L^\infty$	&	0.000131757	&	3.28838E-05	&		\\\hline

\end{tabular}}								
\end{center}
\caption{\small{Solutions $z^M(t)$ for $J=3$ and $J=4$ for example \ref{Problem2}.}}	
\label{p2Table4}											
\end{table}

\begin{figure}
        \begin{subfigure}[b]{0.45\textwidth}
\includegraphics[scale=0.3]{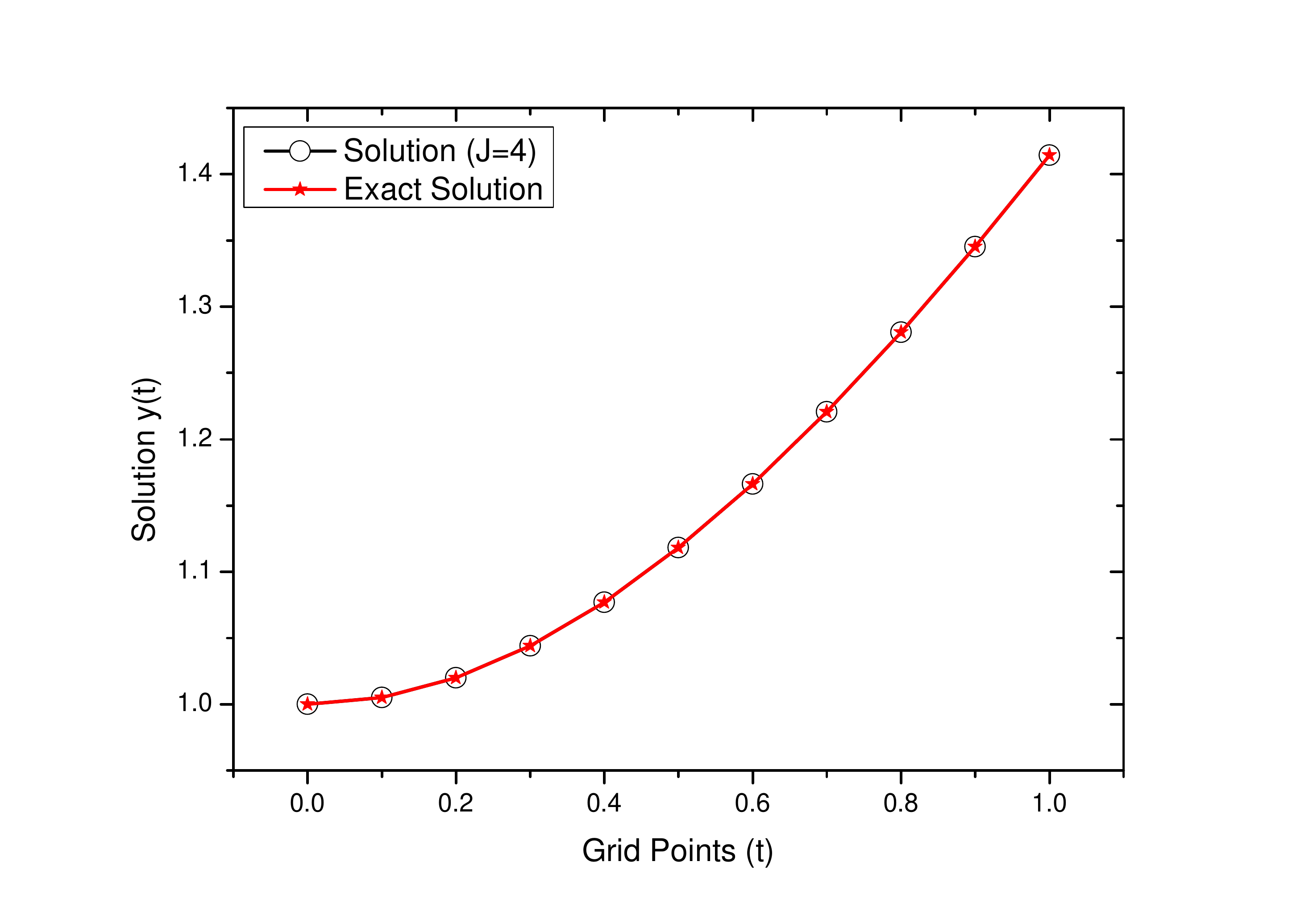}
\subcaption{Graph of $y^M(t)$ for $J=4$  for example \ref{Problem2}.}\label{p2fig5}
        \end{subfigure}\hspace{0.5in}
        \begin{subfigure}[b]{0.45\textwidth}
\includegraphics[scale=0.3]{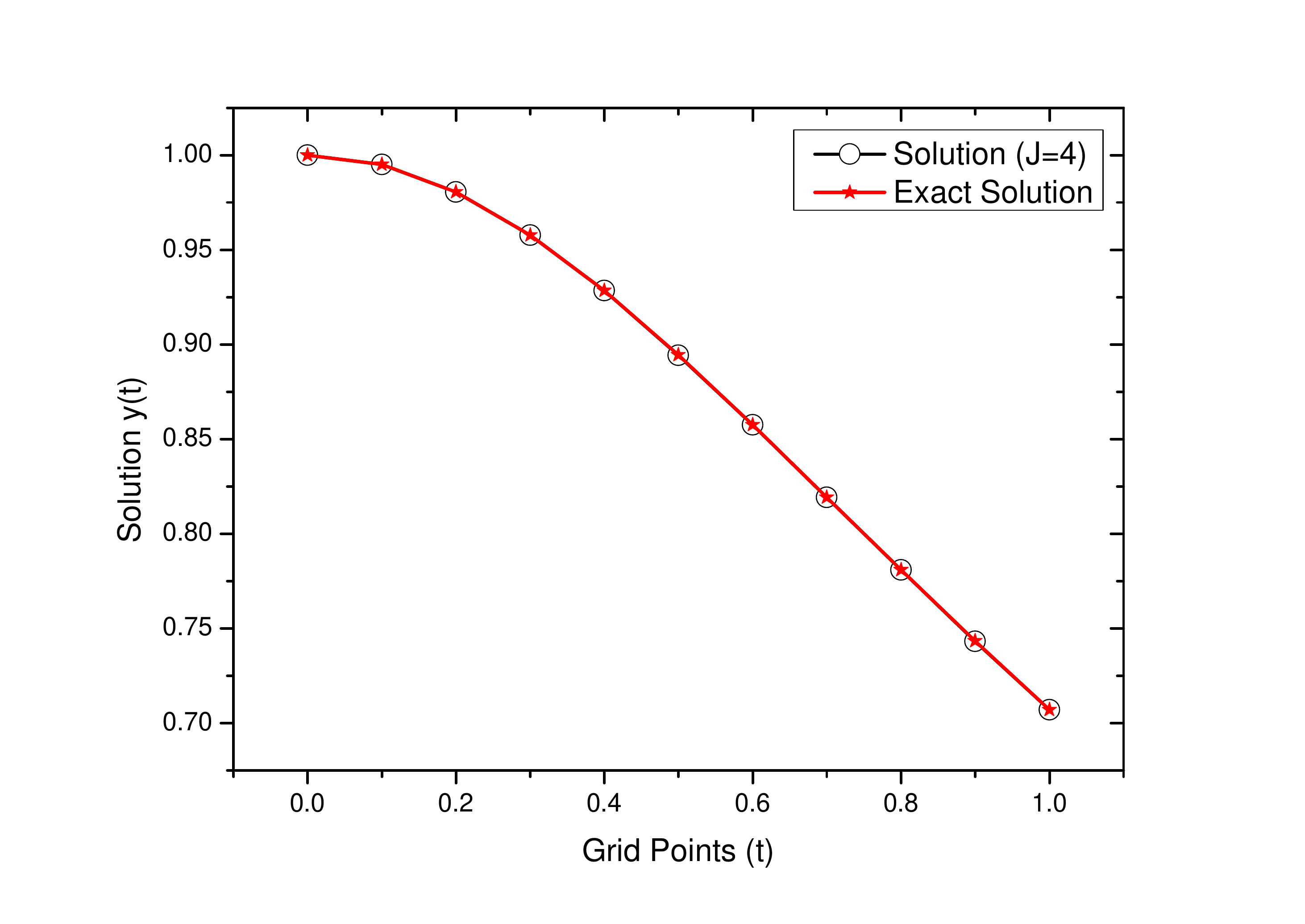}
\subcaption{Graph of $z^M(t)$ for $J=4$  for example \ref{Problem2}.}\label{p2fig6}
        \end{subfigure}%
        \caption{}
        \label{ivpfig3}
\end{figure}

\begin{figure}
        \begin{subfigure}[b]{0.45\textwidth}
\includegraphics[scale=0.3]{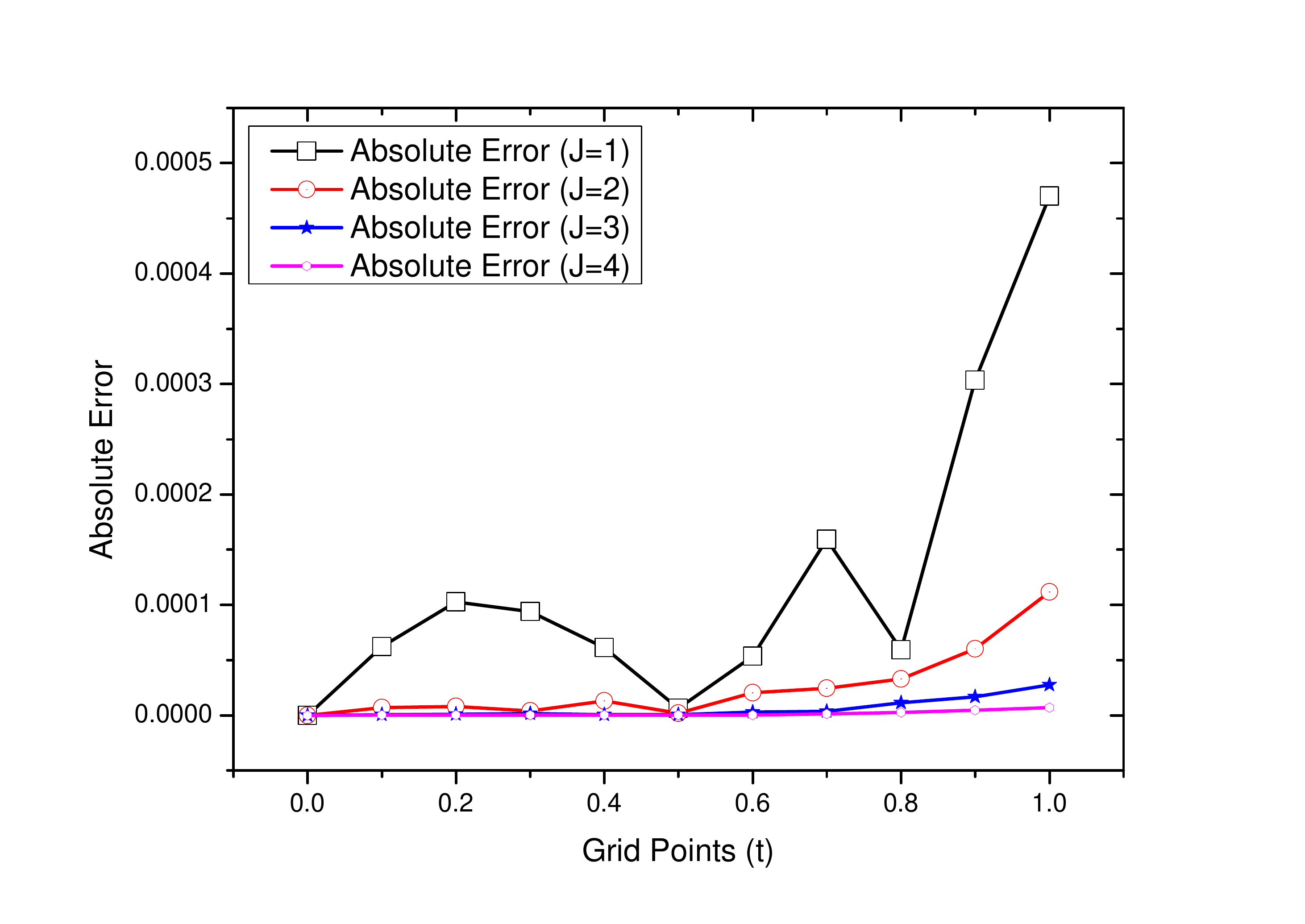}
\subcaption{Graph of absolute errors in $y^M(t)$ for example \ref{Problem2}.}\label{p2fig7}
        \end{subfigure}\hspace{0.5in}
        \begin{subfigure}[b]{0.45\textwidth}
\includegraphics[scale=0.3]{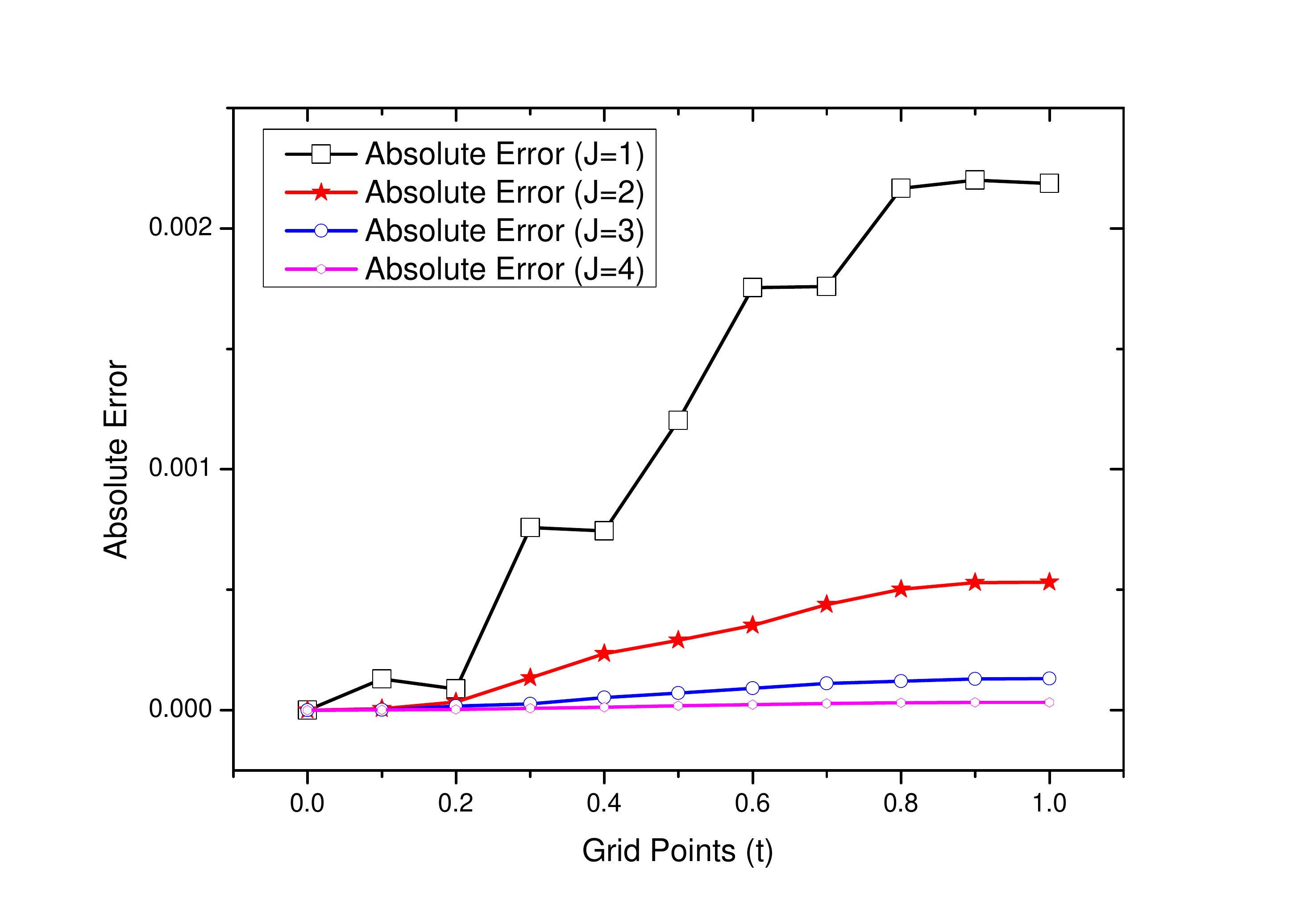}
\subcaption{Graph of absolute errors in $z^M(t)$ for example \ref{Problem2}.}\label{p2fig8}
        \end{subfigure}
        \caption{}
          \label{ivpfig4}
\end{figure}

\section{System of Two Point Nonlinear Singular BVP}\label{Sec2PBVP}

In this section we will develop the solution technique for solving system of nonlinear singular two point boundary value problem. We will also discuss some numerical examples based on it.

\subsection{Method 2 : HWCABVP}\label{method2PBVP}

Consider \eqref{p2eqn1}-\eqref{p2eqn2} with the following boundary conditions,
\begin{equation}\label{p2eq26}
y'(0)=0,~y(1)=\delta_1,
\end{equation} 
\begin{equation}\label{p2eq27}
z'(0)=0,~z(1)=\delta_2,
\end{equation}
where $t \in (0,1)$ and $\delta_1$, $\delta_2$, $k_1\geq 1$, $k_2\geq 1$, $\omega_1<1$, $\omega_2<1$ are real constants.

\begin{theorem}
Consider the system of differential equations \eqref{p2eqn1}-\eqref{p2eqn2} with boundary conditions \eqref{p2eq26}-\eqref{p2eq27}. Let us assume $f_1(t,y,z)$, $f_2(t,y,z)$ be continuous functions in $t$. Let $y(t)$ and $z(t)$ be the solutions of the differential equations \eqref{p2eqn1}-\eqref{p2eqn2}  subject to the initial conditions \eqref{p2eq26}-\eqref{p2eq27}. Then the numerical solutions $y(t)$ and $z(t)$ for differential equations \eqref{p2eqn1}-\eqref{p2eqn2}  using Haar wavelet method are defined as follows,
\begin{equation*}
y(t)=\delta_1+\sum_{i=1}^{2M} a_i (P_{2,i}(t)-P_{2,i}(1)),
\end{equation*}
\begin{equation*}
z(t)=\delta_2+\sum_{i=1}^{2M} b_i (P_{2,i}(t)-P_{2,i}(1)).
\end{equation*}
\end{theorem}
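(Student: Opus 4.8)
The plan is to follow the same template as in the IVP theorem, the only change being the way the two constants of integration are pinned down. First I would posit the Haar expansions of the highest derivatives,
\begin{equation*}
y''(t)=\sum_{i=1}^{2M} a_i h_i(t),\qquad z''(t)=\sum_{i=1}^{2M} b_i h_i(t),
\end{equation*}
with unknown coefficients $a_i,b_i$; this is legitimate because, under the continuity hypotheses on $f_1,f_2$, the solutions $y,z$ are $C^2$ on $[0,1]$ so $y'',z''\in L^2[0,1]$ and admit such finite Haar truncations in the collocation setting.

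Next I would integrate each expansion twice from $0$ to $t$, using the definition \eqref{p2eq5} of $P_{v,i}$, to obtain
\begin{equation*}
y'(t)=\sum_{i=1}^{2M} a_i P_{1,i}(t)+y'(0),\qquad y(t)=\sum_{i=1}^{2M} a_i P_{2,i}(t)+t\,y'(0)+y(0),
\end{equation*}
and the analogous pair for $z$. The first boundary condition $y'(0)=0$ (resp. $z'(0)=0$) immediately removes the linear-in-$t$ term, leaving $y(t)=\sum_i a_i P_{2,i}(t)+y(0)$. The point where this proof differs from the IVP one is that $y(0)$ is now an unknown rather than a datum; to determine it I would evaluate at $t=1$ and impose $y(1)=\delta_1$, giving $\delta_1=\sum_i a_i P_{2,i}(1)+y(0)$, hence $y(0)=\delta_1-\sum_i a_i P_{2,i}(1)$. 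Substituting this back yields exactly
\begin{equation*}
y(t)=\delta_1+\sum_{i=1}^{2M} a_i\bigl(P_{2,i}(t)-P_{2,i}(1)\bigr),
\end{equation*}
and repeating verbatim with $b_i$, $z$, $\delta_2$ gives the companion formula. Note $P_{2,i}(1)$ is a well-defined finite number from the explicit piecewise-polynomial formula \eqref{p2eq6}–\eqref{p2eq7}, so no convergence issue arises.

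There is no serious obstacle here: the argument is pure bookkeeping of the two constants of integration, and the only thing one must be careful about is to apply the Neumann condition at $t=0$ before the Dirichlet condition at $t=1$, since otherwise one cannot eliminate the $t\,y'(0)$ term cleanly. After this representation is established, the remaining (non-trivial, but outside the theorem's scope) step is the one sketched after the statement: substitute $y,z$ and $y'',z''$ into \eqref{p2eqn1}–\eqref{p2eqn2}, discretize at the collocation points $x_c$ of \eqref{p2eq9}, and solve the resulting nonlinear algebraic system for $(a_i),(b_i)$ by Newton–Raphson.
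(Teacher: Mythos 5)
Your proposal is correct and follows essentially the same route as the paper's own proof: expand $y''$ and $z''$ in the truncated Haar basis, integrate twice to introduce the constants $y'(0)$ and $y(0)$, and then use $y'(0)=0$ together with $y(1)=\delta_1$ to solve for $y(0)=\delta_1-\sum_{i=1}^{2M}a_iP_{2,i}(1)$, which yields the stated representation (and likewise for $z$). The only difference is that you spell out the elimination of the integration constants explicitly, whereas the paper compresses this into "apply boundary conditions \eqref{p2eq26} and \eqref{p2eq27}."
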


\begin{proof}
Let \eqref{p2eq14}, \eqref{p2eq15} be the solutions of \eqref{p2eqn1}-\eqref{p2eqn2}  where $a_i$, $b_i$ are wavelet coefficients. Now integrate \eqref{p2eq14}, \eqref{p2eq15} two times from $0$ to $t$ we will get \eqref{p2eq16}-\eqref{p2eq19} then apply boundary conditions \eqref{p2eq26} and \eqref{p2eq27} and we get,

\begin{equation}\label{p2eq28}
y'(t)=\sum_{i=1}^{2M} a_i P_{1,i}(t),
\end{equation} 
\begin{equation}\label{p2eq29}
y(t)=\delta_1+\sum_{i=1}^{2M} a_i (P_{2,i}(t)-P_{2,i}(1)),
\end{equation} 
\begin{equation}\label{p2eq30}
z'(t)=\sum_{i=1}^{2M} b_i P_{1,i}(t),
\end{equation} 
\begin{equation}\label{p2eq31}
z(t)=\delta_2+\sum_{i=1}^{2M} b_i (P_{2,i}(t)-P_{2,i}(1)).
\end{equation}
\end{proof}

Now substituting equations \eqref{p2eq14}-\eqref{p2eq15} and \eqref{p2eq28}-\eqref{p2eq31} in \eqref{p2eqn1}-\eqref{p2eqn2}  after discretizing by collocation method, we will get the system of nonlinear equations given as,
\begin{equation} \label{p2eq32}
\Phi^{BVP}_c (a_1,a_2,\cdots,a_{2M})=0, ~~~c=1,2,\cdots,2M,
\end{equation} 
\begin{equation} \label{p2eq33}
\Psi^{BVP}_c (b_1,b_2,\cdots,b_{2M})=0, ~~~c=1,2,\cdots,2M,
\end{equation}
which we solve by Newton-Raphson method to get the wavelet coefficients $a_i$ and $b_i$. Finally substituting the values of $a_i,b_i$ in equations \eqref{p2eq29} and \eqref{p2eq31}, we get the approximate solution obtained by HWCABVP.

\subsection{Convergence Analysis of HWCABVP}\label{five}
Let us consider $y$ and $z$ are the exact solutions of differential equation \eqref{p2eqn1}-\eqref{p2eqn2} with boundary conditions \eqref{p2eq26}-\eqref{p2eq27} and $y^{M}$, $z^{M}$ are the solutions obtained after truncation of the wavelet series. Then
\begin{eqnarray*}
	&&y^{M} = \delta_{1}+\sum_{0}^{2M}a_{i}(p_{2,i}(t)-p_{2,i}(1)),\\
	&&|\tilde{E}_{M1}|=\left|\sum_{j=J+1}^{\infty}\sum_{k=0}^{2^j-1}a_{2^j+k+1}(p_{2,2^j+k+1}(t)-p_{2,2^r+s+1}(1))\right|,
\end{eqnarray*}
\begin{eqnarray*}
	&&z^{M} =\delta_{2}+\sum_{0}^{2M}a_{i}(p_{2,i}(t)-p_{2,i}(1)),\\
	&&|\tilde{E}_{M2}|=\left|\sum_{j=J+1}^{\infty}\sum_{k=0}^{2^j-1}b_{2^j+k+1}(p_{2,2^j+k+1}(t)-p_{2,2^r+s+1}(1))\right|,
\end{eqnarray*}
where $|\tilde{E}_{M1}|=|y^{M}-y|$ and $|\tilde{E}_{M2}|=|z^{M}-z|$. Similar to case of HWCAIVP, Now we define 
\begin{eqnarray}\label{con}
||\tilde{E}_M||_2=||\tilde{E}_{M1}||_2+||\tilde{E}_{M2}||_2.
\end{eqnarray}
\begin{theorem}\label{theorembvp}
	Consider the system of differential equations \eqref{p2eqn1}-\eqref{p2eqn2} subject to boundary conditions \eqref{p2eq26}-\eqref{p2eq27}. Let us assume that  $y'''(t),z'''(t)\in L^2(\mathbb{R})$ are continuous functions on $[0,1]$. Consider $y'''_{r+1}(t)$, $z'''_{r+1}(t)$  are bounded such that $\left|y'''_{r+1}(t)\right|\leq\xi_1$ and $\left|z'''_{r+1}(t)\right|\leq\xi_2$\quad $\forall t \in[0,1]$. Let $\epsilon>0$ be arbitrary small positive number, and if $J>\log_2\left(\sqrt{\frac{2\xi_1}{3\epsilon}}\right)-1$, then $||\tilde{E}_M||_2<\epsilon$. 
\end{theorem}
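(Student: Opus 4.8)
The plan is to mirror the convergence proof for HWCAIVP (Theorem \ref{theoremivp}) essentially verbatim, since the only structural change between the IVP and BVP ansatz is that each basis function $p_{2,i}(t)$ is replaced by $p_{2,i}(t)-p_{2,i}(1)$. First I would expand the square of the $L^2$ norm of the truncation error $\tilde E_{M1}$ exactly as in \eqref{bz1}, obtaining a quadruple sum over $j,k,r,s$ of products $a_{2^j+k+1}a_{2^r+s+1}\int_0^1(p_{2,2^j+k+1}(t)-p_{2,2^j+k+1}(1))(p_{2,2^r+s+1}(t)-p_{2,2^r+s+1}(1))\,dt$. The bound on the coefficients $a_i\le\xi_1/2^{j+1}$ from \eqref{bz2} carries over unchanged, because the coefficients are still determined by $a_i=2^j\int_0^1 y''(t)h_i(t)\,dt$ and the mean-value argument only involves $y''$, not the shifted primitives.

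The one genuinely new ingredient is a bound on the shifted function $q_{2,i}(t):=p_{2,i}(t)-p_{2,i}(1)$. Here I would invoke inequality \eqref{d1}, which says $0\le p_{2,i}(t)\le (1/2^{j+1})^2$ for all $t\in[0,1]$; since both $p_{2,i}(t)$ and $p_{2,i}(1)$ lie in the interval $[0,(1/2^{j+1})^2]$, their difference satisfies $|q_{2,i}(t)|\le (1/2^{j+1})^2$ as well — the same bound as before. This is why the final threshold on $J$ only picks up an extra factor of $2$ relative to the IVP case: substituting $|q_{2,i}(t)|\le(1/2^{j+1})^2$ and $|q_{2,i}(t)| \le (1/2^{r+1})^2$ into the quadruple sum, together with $a_i\le\xi_1/2^{j+1}$, gives
\begin{equation*}
\|\tilde E_{M1}\|_2^2\le \xi_1^2\sum_{j=J+1}^{\infty}\sum_{r=J+1}^{\infty}\Big(\frac{1}{2^{j+1}}\Big)^3\Big(\frac{1}{2^{r+1}}\Big)^3 2^j 2^r,
\end{equation*}
and summing the two geometric series yields $\|\tilde E_{M1}\|_2^2\le \tfrac{1}{36}\xi_1^2 (1/2^{J+1})^4$, hence $\|\tilde E_{M1}\|_2\le \tfrac16\xi_1(1/2^{J+1})^2$. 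Requiring this to be at most $\epsilon/2$ gives $(1/2^{J+1})^2\le 3\epsilon/\xi_1$, i.e. $2^{J+1}\ge\sqrt{\xi_1/(3\epsilon)}$; the factor $2$ inside the square root in the statement, $J>\log_2\sqrt{2\xi_1/(3\epsilon)}-1$, is the (slightly conservative) choice that absorbs the crude step $|q_{2,i}(t)|\le p_{2,i}(t)+p_{2,i}(1)\le 2(1/2^{j+1})^2$ if one does not notice the sharper estimate above.

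I would then repeat the identical argument for $\tilde E_{M2}$ with $z$, $b_i$, $\xi_2$ in place of $y$, $a_i$, $\xi_1$ to get $\|\tilde E_{M2}\|_2<\epsilon/2$, and conclude by the triangle inequality \eqref{con} that $\|\tilde E_M\|_2<\epsilon$ whenever $J>\log_2\sqrt{2\xi_1/(3\epsilon)}-1$. I do not anticipate a real obstacle here: the proof is a routine adaptation, and the only point requiring a moment's care is the bound on the shifted primitive $p_{2,i}(t)-p_{2,i}(1)$, which is where the constant in the resolution threshold changes.
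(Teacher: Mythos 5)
Your proposal is correct and follows essentially the same route as the paper: expand $\|\tilde E_{M1}\|_2^2$ into the quadruple sum, bound $a_i\le \xi_1/2^{j+1}$ by the mean-value argument on $y''$, bound the shifted primitive $p_{2,i}(t)-p_{2,i}(1)$, and sum the geometric series. The only (harmless) divergence is that the paper bounds the shifted primitive by the triangle inequality as $2(1/2^{j+1})^2$, yielding $\|\tilde E_{M1}\|_2\le \tfrac13\xi_1(1/2^{J+1})^2$ and hence exactly the stated threshold $J>\log_2\sqrt{2\xi_1/(3\epsilon)}-1$, whereas your sharper observation that both $p_{2,i}(t)$ and $p_{2,i}(1)$ lie in $[0,(1/2^{j+1})^2]$ gives the constant $\tfrac16$ and shows the stated threshold is conservative --- a point you correctly identify yourself.
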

\begin{proof}
	We will do calculation for $\tilde{E}_{M1}$, for $\tilde{E}_{M2}$ it will follow accordingly.
	Expanding quadrate of $L^2$ norm of error function, we obtain
	\begin{eqnarray*}
		||\tilde{E}_{M1}||_{2}^2= \int_{0}^{1}{\left(\sum_{j=J+1}^{\infty}\sum_{k=0}^{2^j-1}a_{2^j+k+1}(p_{2,2^j+k+1}(t)-p_{2,2^j+k+1}(1))\right)}^2 dt.
	\end{eqnarray*}
	Then we have
	\begin{eqnarray}\label{b}
	||\tilde{E}_{M1}||_{2}^2 = \sum_{j=J+1}^{\infty}\sum_{k=0}^{2^j-1}\sum_{r=J+1}^{\infty}\sum_{s=0}^{2^r-1}a_{2^j+k+1}a_{2^r+s+1}\int_{0}^{1}(p_{2,2^j+k+1}(t)-p_{2,2^j+k+1}(1))(p_{2,2^r+s+1}(t)-p_{2,2^r+s+1}(1))dt.
	\end{eqnarray}
	By the use of definition of Haar wavelet function we can evaluate coefficients $a_{i}$ as
	\begin{eqnarray*}
		&&a_{i}=2^j\int_{0}^{1}y''(t)h_{i}(t)dt,\\
		&&a_{i}=2^j\left[\int_{\eta_{1}}^{\eta_{2}}y''(\zeta)d\zeta-\int_{\eta_{2}}^{\eta_{3}}y''(\zeta)d\zeta\right],\\
		&&a_{i}=2^j[(\eta_{2}-\eta_{1})y''(\zeta_{1})-(\eta_{3}-\eta_{2})y''(\zeta_{2})],\\
	\end{eqnarray*}
	where $\zeta_{1} \in (\eta_{1},\eta_{2})$ and $\zeta_{2} \in (\eta_{2},\eta_{3})$. It follows from Eq. \eqref{p2eq2} that $(\eta_{2}-\eta_{1})=(\eta_{3}-\eta_{2})=1/(2m)=1/(2^{j+1})$, then the above expression of $a_{i}$ reduces to
	\begin{eqnarray*}
		&&a_{i}=\frac{1}{2}[y''(\zeta_{1})-y''(\zeta_{2})]=\frac{1}{2}(\zeta_{1}-\zeta_{2})\frac{dy''}{dt}(\zeta),\:\:\:  \zeta \in (\zeta_{1},\zeta_{2}).
	\end{eqnarray*}
	Let us consider that $\left(\frac{dy''}{dt}\right)$ is bounded such that $\left|\frac{dy''}{dt}\right|\leq\xi_1$, so we have
	\begin{equation}\label{c}
	a_{i} \leq \xi_1\left(\frac{1}{2^{j+1}}\right).
	\end{equation}
	Here we will solve for upper bound of a function $p_{2,i}$ in all subintervals. Since $p_{2,i}(t)=0$ for $t \in [0,\eta_{1}(i)]$. The function $p_{2,i}(t)$ increases monotonically in the interval $t\in[\eta_{1}(i),\eta_{2}(i)]$. Thus $p_{2,i}(t)$ achieves its upper bound at $t= \eta_{2}(i)$ as follows
	\begin{eqnarray*}
		p_{2,i}=p_{2,2^j+k+1} \leq \frac{{[\eta_{2}(i)-\eta_{1}(i)]}^2}{2}=\frac{1}{2}\left(\frac{1}{2^{j+1}}\right)^2,\:\:\: t\in[\eta_{1}(i),\eta_{2}(i)].
	\end{eqnarray*}
	\noindent In the interval $t \in [\eta_{2}(i),\eta_{3}(i)]$ the function $p_{2,i}$ is monotonically increasing if
	\begin{eqnarray}\label{this}
	t \leq \eta_{3},
	\end{eqnarray}
	which is obviously true.
	
	This inequality \eqref{this} can be derived from formulas \eqref{p2eq2} and \eqref{p2eq6} and condition $\frac{dp_{2,i}(t)}{dt}>0$.
	Hence maximum value of $p_{2,i}(t)$ can be obtained by substituting $t=\eta_{3}(i)$ in eq. \eqref{p2eq6} as
	\begin{eqnarray*}
		p_{2,i}(t)=p_{2,2^j+k+1}\leq\left(\frac{1}{2^{j+1}}\right)^2,\:\:\:t\in[\eta_{2}(i),\eta_{3}(i)].
	\end{eqnarray*}
	When $t \in [\eta_{3},1]$ the function $p_{2,i}(t)$ can be expanded as (by eq. \eqref{p2eq6}) (see \cite{VF2001})
	\begin{eqnarray*}
		p_{2,i}(t)=\left(\frac{1}{2^{j+1}}\right)^2.
	\end{eqnarray*}
	The function $p_{2,i}(t)$ increases monotonically in $[0,1]$, since it increases monotonically in every sub interval of $[0,1]$. So upper bound of $p_{2,i}(t)$ in $[0,1]$ is given by
	\begin{eqnarray}\label{d}
	p_{2,i}(t)\leq \left(\frac{1}{2^{j+1}}\right)^2 \:\:\: \forall t \in[0,1] .
	\end{eqnarray}
	Now inserting eq. \eqref{c} in equation \eqref{b} we get
	\begin{eqnarray}\label{f}
	||\tilde{E}_{M1}||_{2}^2 \leq {\xi_1}^2 \sum_{j=J+1}^{\infty}\sum_{k=0}^{2^j-1}\sum_{r=J+1}^{\infty}\sum_{s=0}^{2^r-1}\frac{1}{2^{r+j+2}}\int_{0}^{1}(p_{2,2^j+k+1}(t)-p_{2,2^j+k+1}(1))(p_{2,2^r+s+1}(t)-p_{2,2^r+s+1}(1))dt.
	\end{eqnarray}
	Now since
	\begin{eqnarray}\label{g}
	(p_{2,2^j+k+1}(t)-p_{2,2^j+k+1}(1))\leq |p_{2,2^j+k+1}(t)|+|p_{2,2^j+k+1}(1))|.
	\end{eqnarray}
	Here we have
	\begin{eqnarray}\label{h}
	p_{2,2^j+k+1}(t)\leq\left(\frac{1}{2^{j+1}}\right)^2, \quad p_{2,2^r+s+1}(t) \leq \left (\frac{1}{2^{r+1}}\right)^2\:\:\forall t \in [0,1].
	\end{eqnarray}
	Using eq. \eqref{h} in equation \eqref{g} we get
	\begin{eqnarray}\label{i}
	(p_{2,2^j+k+1}(t)-p_{2,2^j+k+1}(1))\leq 2\left(\frac{1}{2^{j+1}}\right)^2.
	\end{eqnarray}
	Similarly,
	\begin{eqnarray}\label{j}
	(p_{2,2^r+s+1}(t)-p_{2,2^r+s+1}(1))\leq 2\left(\frac{1}{2^{r+1}}\right)^2.
	\end{eqnarray}
	Inserting eq. \eqref{i}, \eqref{j} in equation \eqref{f} we get
	\begin{eqnarray*}
		&&||\tilde{E}_{M1}||_{2}^2 \leq 4{\xi_1}^2 \sum_{j=J+1}^{\infty}\sum_{r=J+1}^{\infty}\left(\frac{1}{2^{j+1}}\right)^3\left(\frac{1}{2^{r+1}}\right)^3 2^j2^r(1-\eta_{1}),
		\\&&||\tilde{E}_{M1}||_{2}^2 \leq \frac{1}{9}\xi_1^2\left(\frac{1}{2^{J+1}}\right)^4.
	\end{eqnarray*}
	Hence,
	\begin{eqnarray*}
		&&||\tilde{E}_{M1}||_{2} \leq \frac{1}{3}\xi_1\left(\frac{1}{2^{J+1}}\right)^2.
	\end{eqnarray*}
	Hence the proof is completed as per arguments given in proof of theorem \ref{theoremivp}.
\end{proof}

\subsection{Numerical Illustration for HWCABVP}

In this section we will discuss two numerical problems based on nonlinear singular two point boundary value problem (Wazwaz et al. \cite{Wazwaz2013ADM}).

\subsubsection{Example 3 (\cite{Wazwaz2013ADM})} \label{Problem3}

Consider system of differential equation \eqref{p2eqn1}-\eqref{p2eqn2} with boundary conditions \eqref{p2eq26}-\eqref{p2eq27} with $\delta_1=1-2\log_e2$, $\delta_2=1+2\log_e2$, $k_1=5$, $k_2=3$, $\omega_1=-5$, $\omega_2=-3$, $f_1(t,y(t),z(t))=8\Big(exp(y-1)+2 exp(-\frac{z-1}{2})\Big)$ and  $f_2(t,y(t),z(t))=-8\Big(exp(-(z-1))+exp(\frac{y-1}{2})\Big)$. Applying solution method (subsection \ref{method2PBVP}) to solve this problem, we get system of non-linear equations. Thus we arrive at \eqref{p2eq32}-\eqref{p2eq33}.  To solve the non-linear equations we use Newton Raphson method to calculate wavelet coefficients ($a_i$) and ($b_i$). After calculating wavelet coefficients ($a_i$) and ($b_i$) we get our required solution. Here we are computing absolute error described in subsubsection $\ref{Problem1}$. The exact solution $\tilde{y}(t)$ and $\tilde{z}(t)$ of this problem are $1-2\log_e(1+t^2)$ and $1+2\log_e(1+t^2)$, respectively.

For initial guess $[0.8,0.8,\cdots,0.8]$ computed solutions for $y^M(t)$ and $z^M(t)$ is given in Table \ref{p2Table5} and Table \ref{p2Table6} respectively for $J=3$ and $J=4$. Graph for $y^M(t)$ and $z^M(t)$ with exact solution is given in figure \ref{p2fig9} and figure \ref{p2fig10} respectively for $J=4$. Graph of absolute errors in computation of $y^M(t)$ and $z^M(t)$ is given in figure \ref{p2fig11} and figure \ref{p2fig12} respectively for $J=1$, $J=2$, $J=3$ and $J=4$.

Small perturbations in initial guesses, say, $[0.82,0.82,\cdots,0.82]$ and $[0.78,0.78,\cdots,0.78]$ does affected the final solution, which shows the stability of the method.

\begin{table}[H]											 
\centering											
\begin{center}											
\resizebox{7cm}{2.5cm}{											
\begin{tabular}	{|c | c|  c|  c|}		
\hline
$t$	&	$J=3$	&	$J=4$	&	Exact Solution	\\\hline
0	&	1.00147	&	1.00037	&	1	\\\hline
0.1	&	0.981545	&	0.980458	&	0.980099	\\\hline
0.2	&	0.922895	&	0.921893	&	0.921559	\\\hline
0.3	&	0.828852	&	0.827944	&	0.827645	\\\hline
0.4	&	0.704169	&	0.703414	&	0.70316	\\\hline
0.5	&	0.554528	&	0.553916	&	0.553713	\\\hline
0.6	&	0.38565	&	0.385183	&	0.385031	\\\hline
0.7	&	0.202866	&	0.202554	&	0.202448	\\\hline
0.8	&	0.0108728	&	0.0106725	&	0.0106075	\\\hline
0.9	&	-0.186541	&	-0.186624	&	-0.186654	\\\hline
1	&	-0.386294	&	-0.386294	&	-0.386294	\\\hline
$L^\infty$	&	0.00147404	&	0.000367459	&		\\\hline

\end{tabular}}								
\end{center}
\caption{\small{Solutions $y^M(t)$ for $J=3$ and $J=4$ for example \ref{Problem3}.}}	
\label{p2Table5}											
\end{table}

\begin{table}[H]											 
\centering											
\begin{center}											
\resizebox{7cm}{2.5cm}{											
\begin{tabular}	{|c | c|  c|  c|}		
\hline
$t$	&	$J=3$	&	$J=4$	&	Exact Solution	\\\hline
0	&	0.998354	&	0.99959	&	1	\\\hline
0.1	&	1.01828	&	1.0195	&	1.0199	\\\hline
0.2	&	1.07693	&	1.07806	&	1.07844	\\\hline
0.3	&	1.17098	&	1.17201	&	1.17236	\\\hline
0.4	&	1.29567	&	1.29655	&	1.29684	\\\hline
0.5	&	1.44532	&	1.44605	&	1.44629	\\\hline
0.6	&	1.61422	&	1.61478	&	1.61497	\\\hline
0.7	&	1.79703	&	1.79742	&	1.79755	\\\hline
0.8	&	1.98905	&	1.98931	&	1.98939	\\\hline
0.9	&	2.1865	&	2.18661	&	2.18665	\\\hline
1	&	2.38629	&	2.38629	&	2.38629	\\\hline
$L^\infty$	&	0.00164587	&	0.000410165	&		\\\hline

\end{tabular}}								
\end{center}
\caption{\small{Solutions $z^M(t)$ for $J=3$ and $J=4$ for example \ref{Problem3}.}}	
\label{p2Table6}											
\end{table}

\begin{figure}
        \begin{subfigure}[b]{0.45\textwidth}
\includegraphics[scale=0.3]{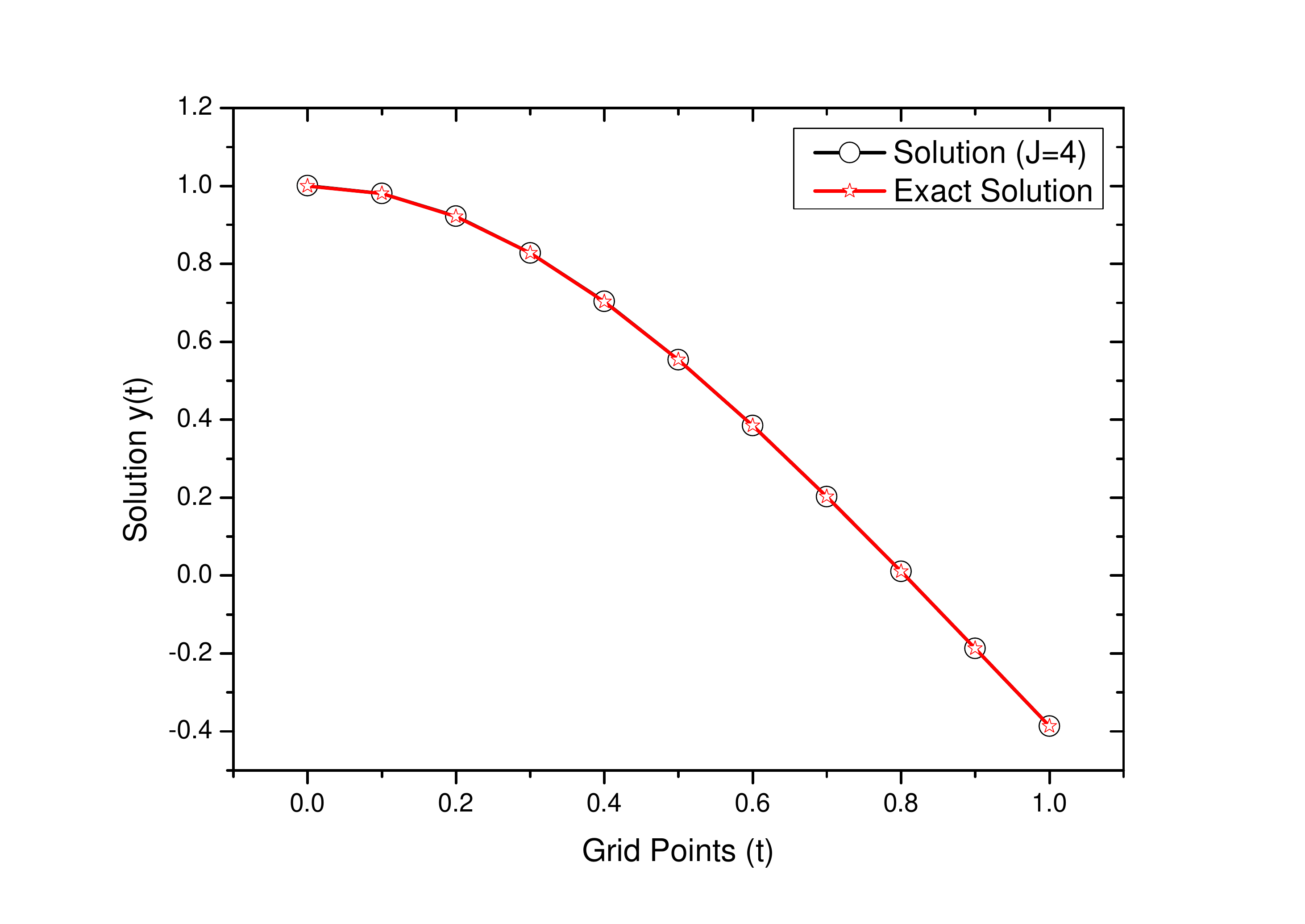}
\subcaption{Graph of $y^M(t)$ for $J=4$ for example \ref{Problem3}.}\label{p2fig9}
        \end{subfigure}\hspace{0.5in}
        \begin{subfigure}[b]{0.45\textwidth}
\includegraphics[scale=0.3]{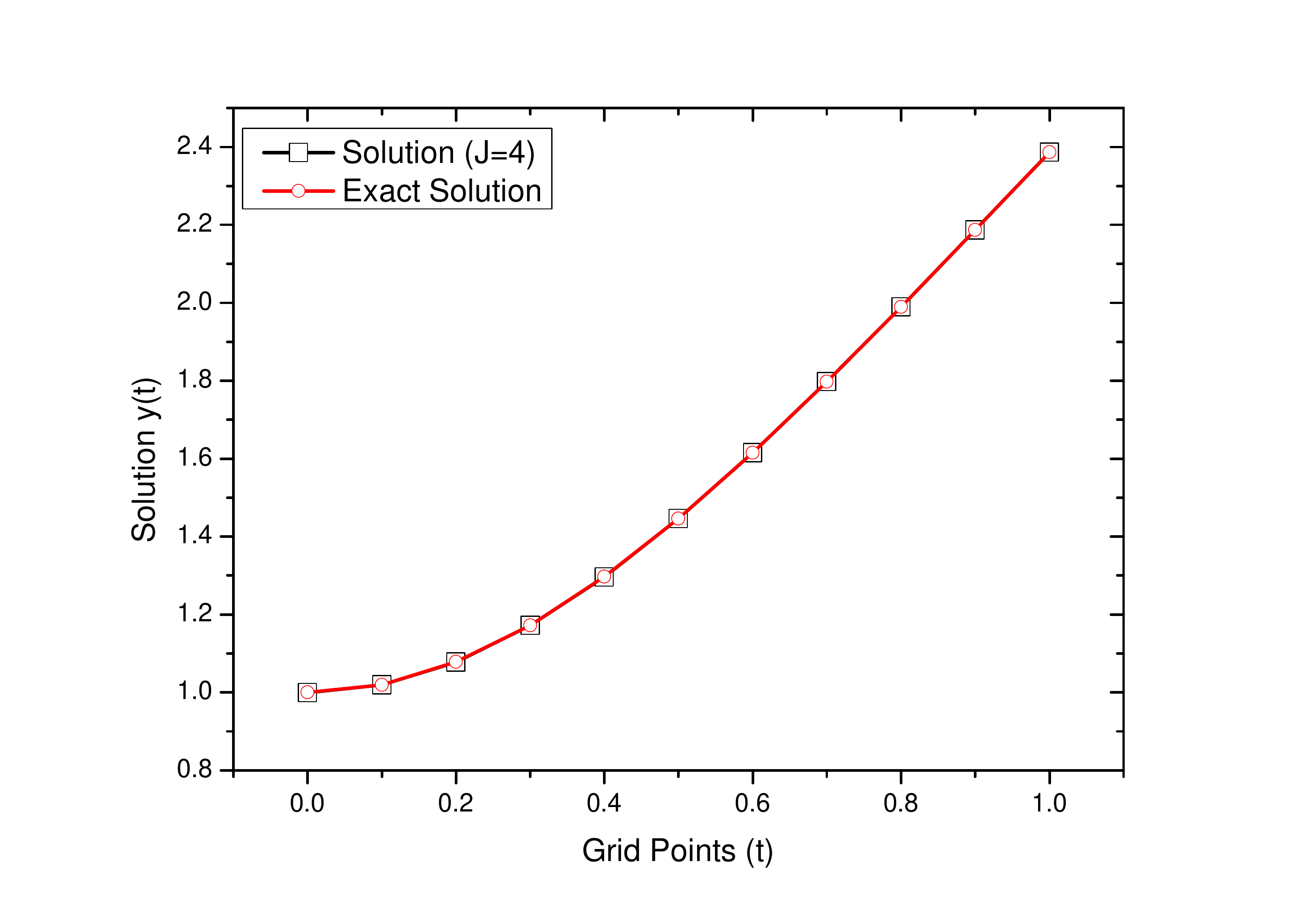}
\subcaption{Graph of $z^M(t)$ for $J=4$ for example \ref{Problem3}}.\label{p2fig10}
        \end{subfigure}
        \caption{}
        \label{bvpfig1}
\end{figure}

\begin{figure}
        \begin{subfigure}[b]{0.45\textwidth}
\includegraphics[scale=0.3]{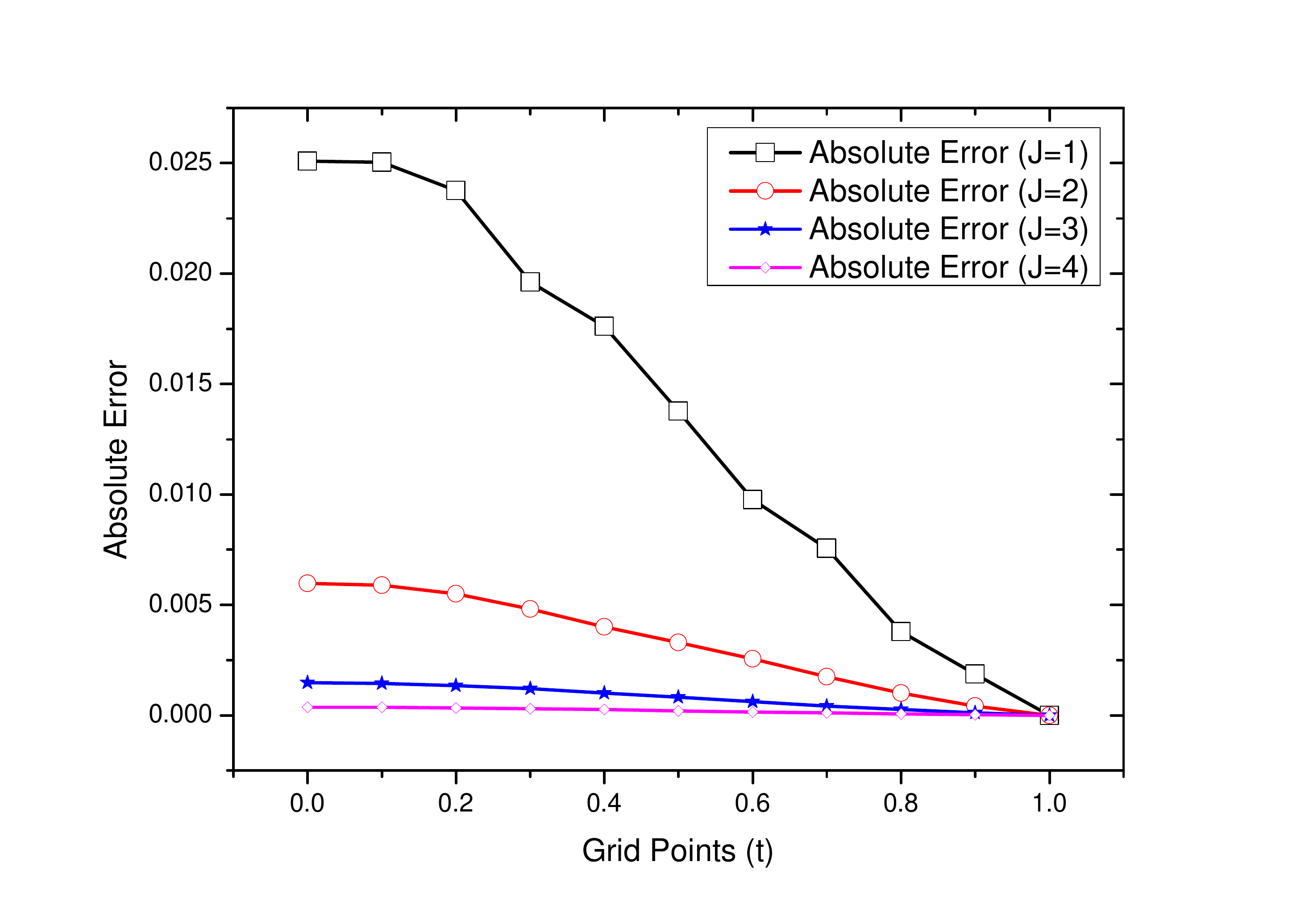}
\subcaption{Graph of absolute errors in $y^M(t)$ for example \ref{Problem3}.}\label{p2fig11}
        \end{subfigure}\hspace{0.5in}
        \begin{subfigure}[b]{0.45\textwidth}
\includegraphics[scale=0.3]{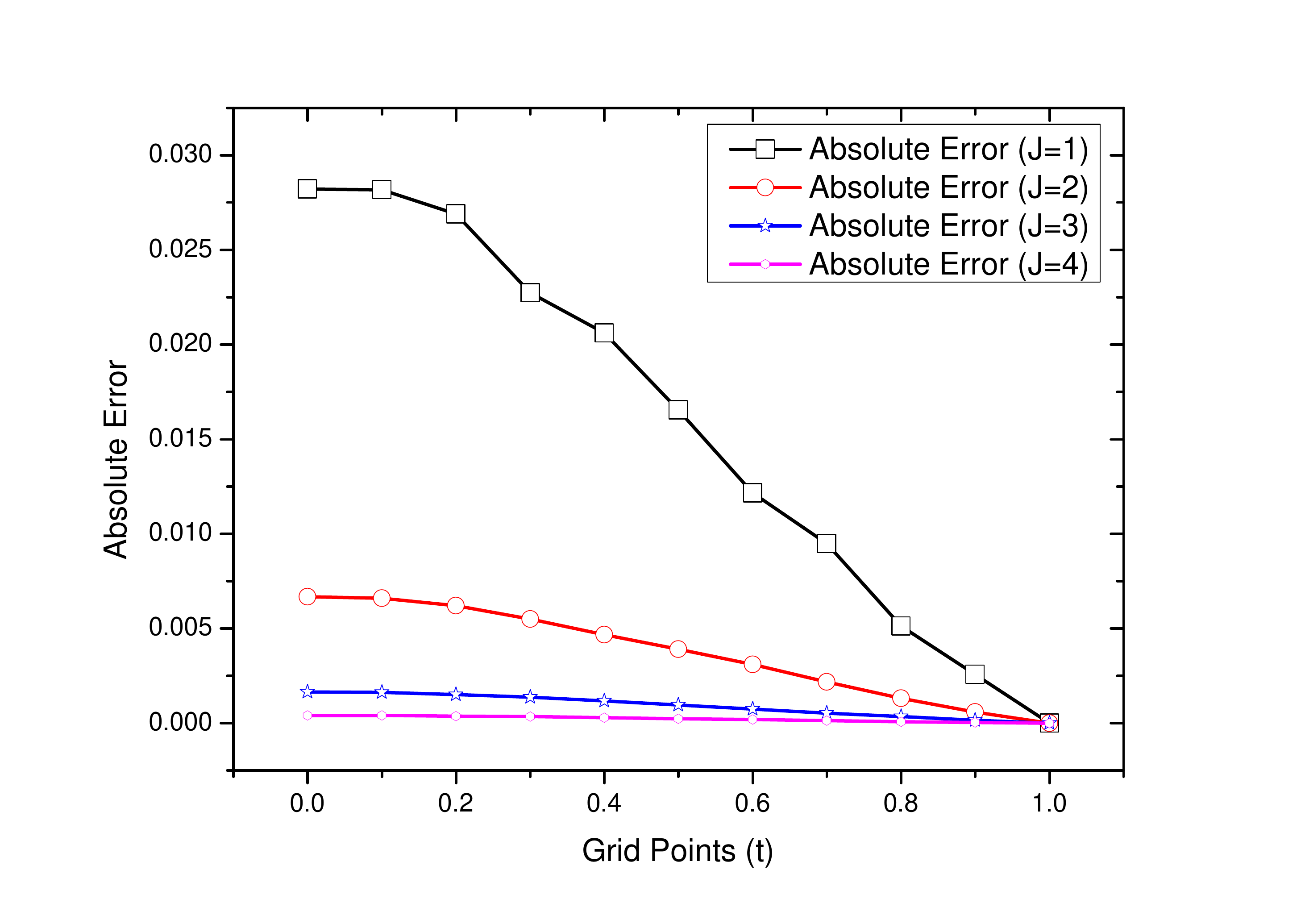}
\subcaption{Graph of absolute errors in $z^M(t)$ for example \ref{Problem3}.}\label{p2fig12}
        \end{subfigure}

        \caption{}        \label{bvpfig2}
\end{figure}

\subsubsection{Example 4 (\cite{Wazwaz2013ADM})} \label{Problem4}

Consider the system of differenital equations \eqref{p2eqn1}-\eqref{p2eqn2} with boundary conditions \eqref{p2eq26}-\eqref{p2eq27} with $\delta_1=\frac{1}{\mathrm{e}}$, $\delta_2=\mathrm{e}$, $k_1=8$, $k_2=4$, $\omega_1=-8$, $\omega_2=-4$, $f_1(t,y(t),z(t))=18y-4y \log_ez$ and $f_2(t,y(t),z(t))=4z\log_ey-10z$. This problem is same as subsubsection $\ref{Problem3}$. Here we are computing absolute error described in subsubsection $\ref{Problem1}$. The exact solution $\tilde{y}(t)$ and $\tilde{z}(t)$ of this problem are $\exp(-t^2)$ and $\exp(t^2)$, respectively.

For initial guess $[0.3,0.3,\cdots,0.3]$ computed solutions for $y^M(t)$ and $z^M(t)$ is given in Table \ref{p2Table7} and Table \ref{p2Table8} respectively for $J=3$ and $J=4$. Graph for $y^M(t)$ and $z^M(t)$ with exact solution is given in figure \ref{p2fig13} and figure \ref{p2fig14} respectively for $J=4$. Graph of absolute errors in computation of $y^M(t)$ and $z^M(t)$ is given in figure \ref{p2fig15} and figure \ref{p2fig16} respectively for $J=1$, $J=2$, $J=3$ and $J=4$.

Varying initial guesses to  $[0.35,0.35,\cdots,0.35]$ and $[0.28,0.28,\cdots,0.28]$, does not change the computed, significantly. This shows that the method is stable.

\begin{table}[H]											 
\centering											
\begin{center}											
\resizebox{7cm}{2.5cm}{											
\begin{tabular}	{|c | c|  c|  c|}		
\hline
$t$	&	$J=3$	&	$J=4$	&	Exact Solution	\\\hline
0	&	1.00075	&	1.00019	&	1	\\\hline
0.1	&	0.990793	&	0.990235	&	0.99005	\\\hline
0.2	&	0.961486	&	0.960964	&	0.960789	\\\hline
0.3	&	0.914573	&	0.91409	&	0.913931	\\\hline
0.4	&	0.852692	&	0.852283	&	0.852144	\\\hline
0.5	&	0.779256	&	0.778915	&	0.778801	\\\hline
0.6	&	0.698032	&	0.697763	&	0.697676	\\\hline
0.7	&	0.612869	&	0.612688	&	0.612626	\\\hline
0.8	&	0.527451	&	0.527331	&	0.527292	\\\hline
0.9	&	0.444924	&	0.444876	&	0.444858	\\\hline
1	&	0.367879	&	0.367879	&	0.367879	\\\hline
$L^\infty$	&	0.000754755	&	0.00018869	&		\\\hline

\end{tabular}}								
\end{center}
\caption{\small{Solutions $y^M(t)$ for $J=3$ and $J=4$ for example \ref{Problem4}.}}	
\label{p2Table7}											
\end{table}

\begin{table}[H]											 
\centering											
\begin{center}											
\resizebox{7cm}{2.5cm}{											
\begin{tabular}	{|c | c|  c|  c|}		
\hline
$t$	&	$J=3$	&	$J=4$	&	Exact Solution	\\\hline
0	&	1.00051	&	1.00013	&	1	\\\hline
0.1	&	1.01055	&	1.01018	&	1.01005	\\\hline
0.2	&	1.0413	&	1.04093	&	1.04081	\\\hline
0.3	&	1.09465	&	1.09429	&	1.09417	\\\hline
0.4	&	1.17395	&	1.17362	&	1.17351	\\\hline
0.5	&	1.28442	&	1.28412	&	1.28403	\\\hline
0.6	&	1.43369	&	1.43341	&	1.43333	\\\hline
0.7	&	1.63256	&	1.63238	&	1.63232	\\\hline
0.8	&	1.89673	&	1.89653	&	1.89648	\\\hline
0.9	&	2.24798	&	2.24794	&	2.24791	\\\hline
1	&	2.71828	&	2.71828	&	2.71828	\\\hline
$L^\infty$	&	0.000505366	&	0.00012642	&		\\\hline

\end{tabular}}								
\end{center}
\caption{\small{Solutions for $z^M(t)$ for $J=3$ and $J=4$ for example \ref{Problem4}.}}	
\label{p2Table8}											
\end{table}

\begin{figure}
        \begin{subfigure}[b]{0.45\textwidth}
\includegraphics[scale=0.3]{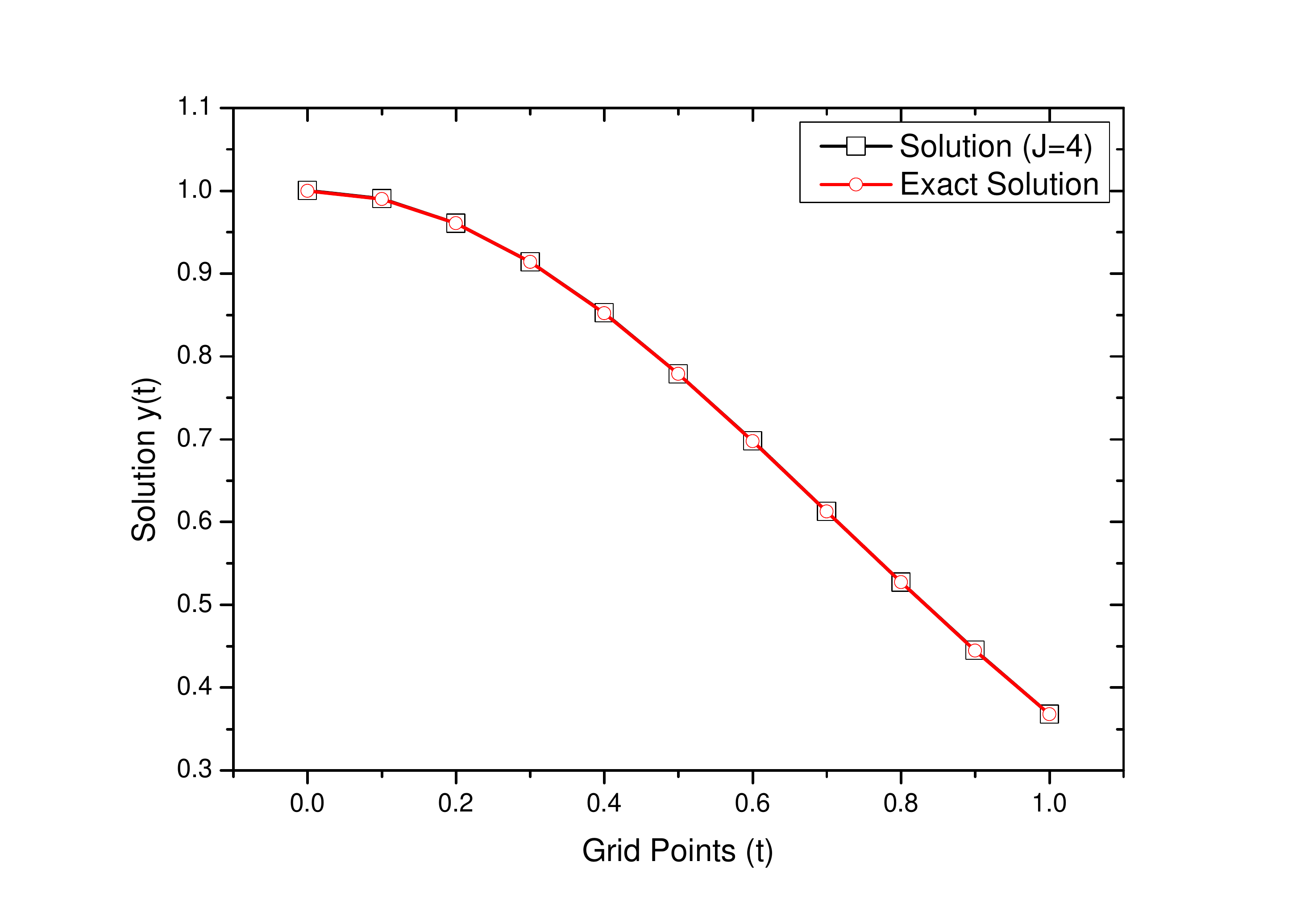}
\caption{Graph of $y^M(t)$ for $J=4$ for example \ref{Problem4}.}\label{p2fig13}
        \end{subfigure}\hspace{0.5in}
        \begin{subfigure}[b]{0.45\textwidth}
\includegraphics[scale=0.3]{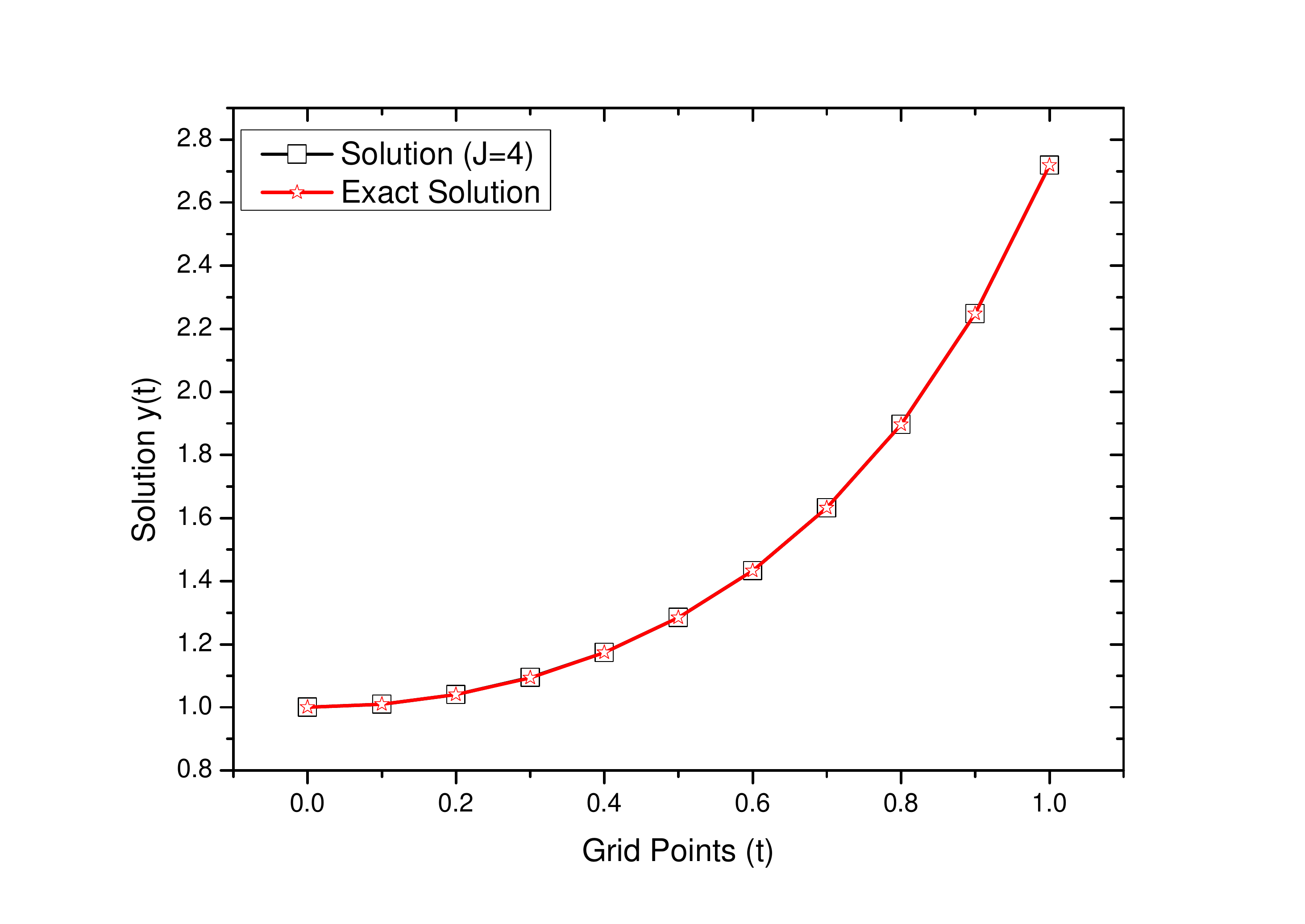}
\caption{Graph of $z^M(t)$ for $J=4$ for example \ref{Problem4}.}\label{p2fig14}
        \end{subfigure}
        \caption{}
        \label{bvpfig3}
\end{figure}

\begin{figure}
        \begin{subfigure}[b]{0.45\textwidth}
\includegraphics[scale=0.3]{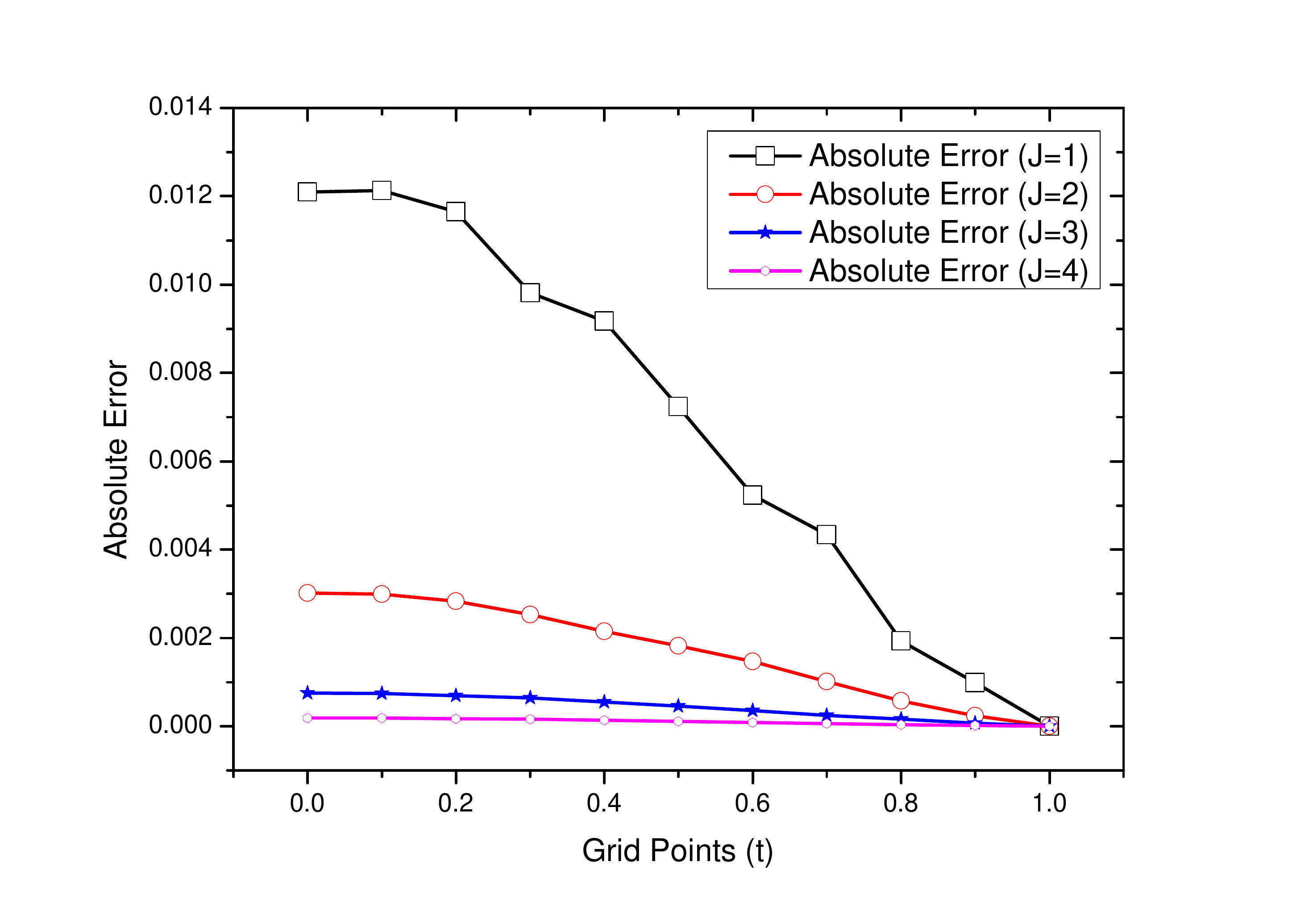}
\caption{Graph of absolute errors in $y^M(t)$ for example \ref{Problem4}.}\label{p2fig15}
        \end{subfigure}\hspace{0.5in}
        \begin{subfigure}[b]{0.45\textwidth}
\includegraphics[scale=0.3]{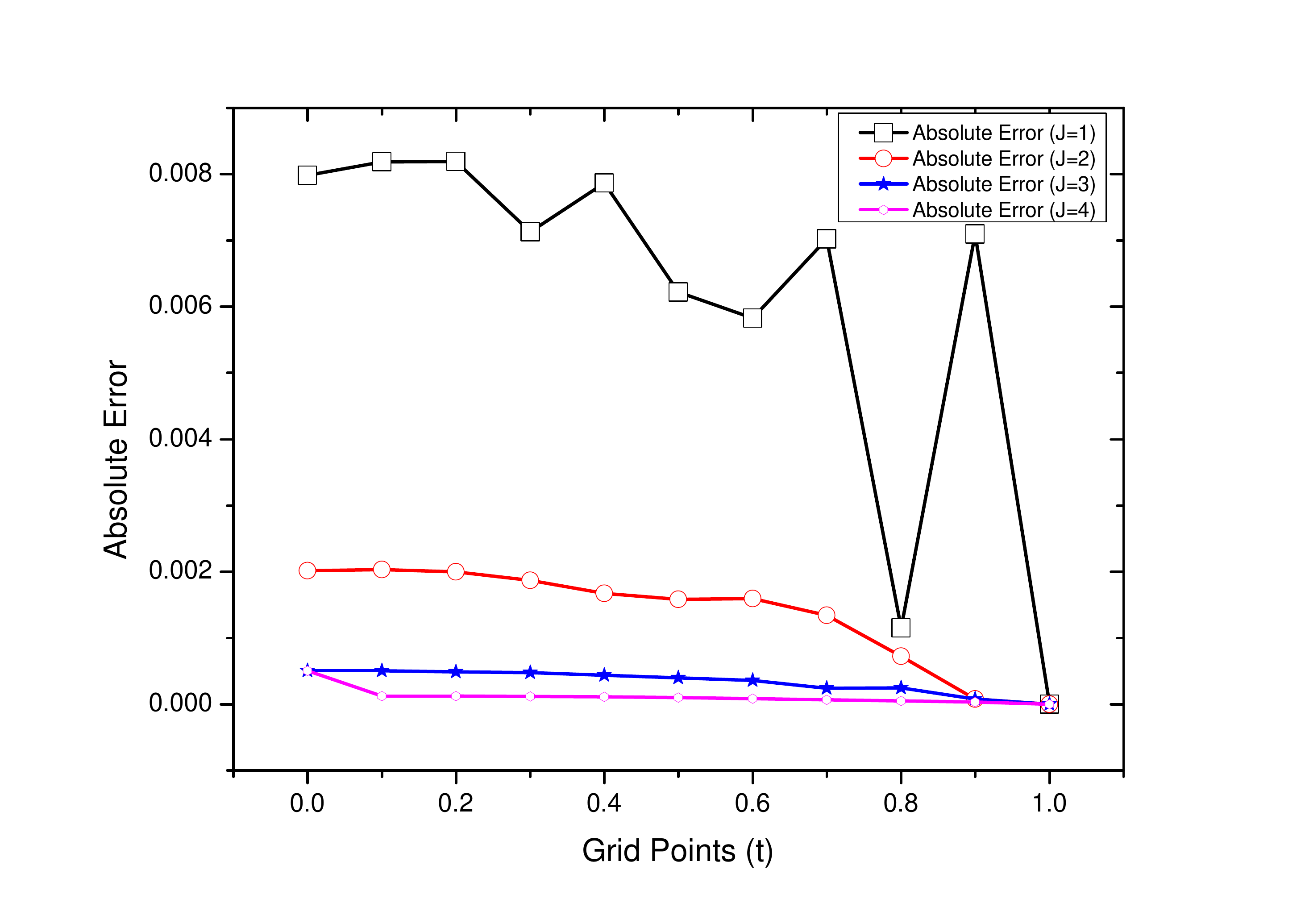}
\caption{Graph of absolute errors in $z^M(t)$ for example \ref{Problem4}.}\label{p2fig16}
        \end{subfigure}
        \caption{}
     \label{bvpfig4}
\end{figure}

\section{System of Nonlinear Singular Four Point BVP}\label{Sec4pBVP}

In this section we will develop the method for solving system of nonlinear singular four point boundary value problem. We also study some numerical examples based on it. These test examples are also considered by Barnwal et al. \cite{Barnwal2019}. We also compare our results with the result of Barnwal et al. \cite{Barnwal2019}.

\subsection{Method 3 :  HWCA4PTBVP}\label{method4PBVP}

Consider the system of differential equations \eqref{p2eqn1}-\eqref{p2eqn2} subject to four-point boundary conditions,
\begin{equation}\label{p2eq34}
y(0)=0,~y(1)=n_1z(v_1),
\end{equation} 
\begin{equation}\label{p2eq35}
z(0)=0,~z(1)=n_2y(v_2),
\end{equation}
where $t \in (0,1)$, $n_1$, $n_2$, $v_1$, $v_2$ $\in (0,1)$ and $0\leq k_1<1$, $0\leq k_2$, $\omega_1<1$, $\omega_2<1$ are real constants.

\begin{theorem}
Consider the system of differential equations  \eqref{p2eqn1}-\eqref{p2eqn2} with boundary conditions \eqref{p2eq34}-\eqref{p2eq35}. Let us assume $f_1(t,y,z)$, $f_2(t,y,z)$ be continuous functions in $t,y,z$. Let $y(t)$ and $z(t)$ be the solutions of the differential equations \eqref{p2eqn1}-\eqref{p2eqn2} subject to the four point boundary conditions \eqref{p2eq34}-\eqref{p2eq35}. Then the numerical solutions $y(t)$ and $z(t)$ for differential equations \eqref{p2eqn1}-\eqref{p2eqn2} using HWCA4PTBVP are defined as follows,
\begin{equation*}
y(t)=\frac{n_1 t}{1-{n_1 n_2 v_1 v_2}} \Big[n_2 v_1 \sum_{i=1}^{2M} a_i [P_{2,i}(v_2)-v_2 P_{2,i}(1)]+\sum_{i=1}^{2M} b_i [P_{2,i}(v_1)-v_1 P_{2,i}(1)]\Big]+\sum_{i=1}^{2M} a_i [P_{2,i}(t)-tP_{2,i}(1)],
\end{equation*}
\begin{equation*}
z(t)=\frac{n_2 t}{1-{n_1 n_2 v_1 v_2}} \Big[\sum_{i=1}^{2M} a_i [P_{2,i}(v_2)-v_2 P_{2,i}(1)]+v_2 n_1 \sum_{i=1}^{2M} b_i [P_{2,i}(v_1)-v_1 P_{2,i}(1)]\Big]+\sum_{i=1}^{2M} b_i [P_{2,i}(t)-tP_{2,i}(1)].
\end{equation*}
\end{theorem}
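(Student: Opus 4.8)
The plan is to mimic the integration argument used in the proofs for the IVP and BVP cases, but now carrying four undetermined constants of integration through the double integration and pinning them down by the four-point conditions \eqref{p2eq34}--\eqref{p2eq35}. First I would start, exactly as in \eqref{p2eq14}--\eqref{p2eq15}, by expanding the second derivatives in the Haar basis, $y''(t)=\sum_{i=1}^{2M} a_i h_i(t)$ and $z''(t)=\sum_{i=1}^{2M} b_i h_i(t)$, and integrate twice from $0$ to $t$ to obtain
\begin{equation*}
y(t)=\sum_{i=1}^{2M} a_i P_{2,i}(t) + t\,y'(0) + y(0),\qquad z(t)=\sum_{i=1}^{2M} b_i P_{2,i}(t) + t\,z'(0) + z(0).
\end{equation*}
The conditions $y(0)=0$ and $z(0)=0$ immediately kill the constant terms, leaving two unknowns $c_1:=y'(0)$ and $c_2:=z'(0)$, so $y(t)=\sum a_i P_{2,i}(t)+c_1 t$ and $z(t)=\sum b_i P_{2,i}(t)+c_2 t$.

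Next I would impose the remaining two conditions $y(1)=n_1 z(v_1)$ and $z(1)=n_2 y(v_2)$. Writing these out gives the linear system
\begin{equation*}
\sum_i a_i P_{2,i}(1) + c_1 = n_1\Big(\sum_i b_i P_{2,i}(v_1) + c_2 v_1\Big),\qquad
\sum_i b_i P_{2,i}(1) + c_2 = n_2\Big(\sum_i a_i P_{2,i}(v_2) + c_1 v_2\Big),
\end{equation*}
which is a $2\times2$ linear system in $(c_1,c_2)$ with coefficient matrix $\begin{bmatrix}1 & -n_1 v_1\\ -n_2 v_2 & 1\end{bmatrix}$, whose determinant is $1-n_1 n_2 v_1 v_2$. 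Since $n_1,n_2,v_1,v_2\in(0,1)$, this determinant is nonzero (indeed positive), so I would solve by Cramer's rule to get
\begin{equation*}
c_1=\frac{n_1}{1-n_1 n_2 v_1 v_2}\Big[n_2 v_1\!\sum_i a_i\big(P_{2,i}(v_2)-v_2 P_{2,i}(1)\big)+\sum_i b_i\big(P_{2,i}(v_1)-v_1 P_{2,i}(1)\big)\Big],
\end{equation*}
and the symmetric expression for $c_2$ with the roles of $(a_i,v_1,n_1)$ and $(b_i,v_2,n_2)$ interchanged appropriately. Substituting $c_1,c_2$ back into $y(t)=\sum a_i(P_{2,i}(t)-tP_{2,i}(1)) + c_1 t$ — after regrouping the $-tP_{2,i}(1)$ term with the $P_{2,i}(t)$ term — and likewise for $z(t)$, yields precisely the two displayed formulas in the statement. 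The only genuine content here is the bookkeeping of which coefficients multiply which integrals; the main (and essentially only) obstacle is keeping the algebra of the $2\times2$ solve and the subsequent regrouping straight, and noting explicitly that $1-n_1 n_2 v_1 v_2\neq 0$ so that the division is legitimate.

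Finally, as in the IVP and BVP cases, once these closed forms are established the coefficients $a_i,b_i$ are determined by substituting $y,z$ (and the corresponding $y'',z''$) into \eqref{p2eqn1}--\eqref{p2eqn2}, collocating at the points $x_c$ of \eqref{p2eq9}, and solving the resulting $2M+2M$ nonlinear algebraic system by Newton--Raphson — but that step lies outside the theorem statement itself, which only asserts the representation formulas. So the proof terminates with the verification that the displayed $y(t),z(t)$ satisfy $y(0)=z(0)=0$, $y(1)=n_1 z(v_1)$, $z(1)=n_2 y(v_2)$, which is a direct back-substitution using $P_{2,i}(0)=0$.
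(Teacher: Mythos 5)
Your proposal is correct and follows essentially the same route as the paper: expand $y''$, $z''$ in the Haar basis, integrate twice, and use the four conditions \eqref{p2eq34}--\eqref{p2eq35} to eliminate the integration constants. In fact you supply more detail than the paper does, by exhibiting the $2\times 2$ linear system for $y'(0)$, $z'(0)$ and observing that its determinant $1-n_1n_2v_1v_2$ is nonzero since $n_1,n_2,v_1,v_2\in(0,1)$ — a point the paper leaves implicit.
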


\begin{proof}
Let \eqref{p2eq14}, \eqref{p2eq15} be the solutions of \eqref{p2eqn1}-\eqref{p2eqn2} where $a_i$, $b_i$ are wavelet coefficients. Now integrate \eqref{p2eq14}-\eqref{p2eq15} two times from $0$ to $t$ we will get \eqref{p2eq16}-\eqref{p2eq19} then apply boundary conditions \eqref{p2eq34}-\eqref{p2eq35} and we get,
\begin{equation}\label{p2eq36}
y'(t)=\frac{n_1}{1-{n_1 n_2 v_1 v_2}} \Big[n_2 v_1 \sum_{i=1}^{2M} a_i [P_{2,i}(v_2)-v_2 P_{2,i}(1)]+\sum_{i=1}^{2M} b_i [P_{2,i}(v_1)-v_1 P_{2,i}(1)]\Big]+\sum_{i=1}^{2M} a_i [P_{1,i}(t)-P_{2,i}(1)],
\end{equation}
\begin{equation}\label{p2eq37}
y(t)=\frac{n_1 t}{1-{n_1 n_2 v_1 v_2}} \Big[n_2 v_1 \sum_{i=1}^{2M} a_i [P_{2,i}(v_2)-v_2 P_{2,i}(1)]+\sum_{i=1}^{2M} b_i [P_{2,i}(v_1)-v_1 P_{2,i}(1)]\Big]+\sum_{i=1}^{2M} a_i [P_{2,i}(t)-tP_{2,i}(1)],
\end{equation}
\begin{equation}\label{p2eq38}
z'(t)=\frac{n_2}{1-{n_1 n_2 v_1 v_2}} \Big[\sum_{i=1}^{2M} a_i [P_{2,i}(v_2)-v_2 P_{2,i}(1)]+v_2 n_1 \sum_{i=1}^{2M} b_i [P_{2,i}(v_1)-v_1 P_{2,i}(1)]\Big]+\sum_{i=1}^{2M} b_i [P_{1,i}(t)-P_{2,i}(1)],
\end{equation}
\begin{equation}\label{p2eq39}
z(t)=\frac{n_2 t}{1-{n_1 n_2 v_1 v_2}} \Big[\sum_{i=1}^{2M} a_i [P_{2,i}(v_2)-v_2 P_{2,i}(1)]+v_2 n_1 \sum_{i=1}^{2M} b_i [P_{2,i}(v_1)-v_1 P_{2,i}(1)]\Big]+\sum_{i=1}^{2M} b_i [P_{2,i}(t)-tP_{2,i}(1)].
\end{equation}
\end{proof}

Now substituting these equations \eqref{p2eq14}-\eqref{p2eq15} and \eqref{p2eq36}-\eqref{p2eq39} in \eqref{p2eqn1}-\eqref{p2eqn2} after discretizing by collocation method, we will get the system of nonlinear equations given as,
\begin{equation} \label{p2eq40}
\Phi^{4PtBVP}_c (a_1,a_2,\cdots,a_{2M})=0, ~~~c=1,2,\cdots,2M,
\end{equation} 
\begin{equation} \label{p2eq41}
\Psi^{4PtBVP}_c (b_1,b_2,\cdots,b_{2M})=0, ~~~c=1,2,\cdots,2M,
\end{equation}
which we solve by Newton Raphson method to get the wavelet coefficients $a_i$ and $b_i$ and substitute them in \eqref{p2eq37} and \eqref{p2eq39} to get approximate HWCA4PTBVP solutions.

\subsection{Convergence Analysis of HWCA4PBVP}\label{4pbvp}
Define
\begin{equation}\label{p2eq371}
y(t)=\frac{n_1 t}{1-{n_1 n_2 v_1 v_2}} \left[n_2 v_1 \sum_{i=1}^{\infty} a_i [P_{2,i}(v_2)-v_2 P_{2,i}(1)]+\sum_{i=1}^{\infty} b_i [P_{2,i}(v_1)-v_1 P_{2,i}(1)]\right]+\sum_{i=1}^{\infty} a_i \left[P_{2,i}(t)-tP_{2,i}(1)\right],
\end{equation}
\begin{equation}\label{p2eq391}
z(t)=\frac{n_2 t}{1-{n_1 n_2 v_1 v_2}} \left[\sum_{i=1}^{\infty} a_i [P_{2,i}(v_2)-v_2 P_{2,i}(1)]+v_2 n_1 \sum_{i=1}^{\infty} b_i [P_{2,i}(v_1)-v_1 P_{2,i}(1)]\right]+\sum_{i=1}^{\infty} b_i \left[P_{2,i}(t)-tP_{2,i}(1)\right].
\end{equation}
\begin{equation}\label{p2eq371M}
y^M(t)=\frac{n_1 t}{1-{n_1 n_2 v_1 v_2}} \left[n_2 v_1 \sum_{i=1}^{2M} a_i [P_{2,i}(v_2)-v_2 P_{2,i}(1)]+\sum_{i=1}^{2M} b_i [P_{2,i}(v_1)-v_1 P_{2,i}(1)]\right]+\sum_{i=1}^{2M} a_i \left[P_{2,i}(t)-tP_{2,i}(1)\right],
\end{equation}
\begin{equation}\label{p2eq391M}
z^M(t)=\frac{n_2 t}{1-{n_1 n_2 v_1 v_2}} \left[\sum_{i=1}^{2M} a_i [P_{2,i}(v_2)-v_2 P_{2,i}(1)]+v_2 n_1 \sum_{i=1}^{2M} b_i [P_{2,i}(v_1)-v_1 P_{2,i}(1)]\right]+\sum_{i=1}^{2M} b_i \left[P_{2,i}(t)-tP_{2,i}(1)\right].
\end{equation}
Then,
\begin{multline*}
	|\overline{E}_{M1}|=\left|\frac{n_1 t}{1-{n_1 n_2 v_1 v_2}} \left[n_2 v_1 \sum_{j=J+1}^{\infty}\sum_{k=0}^{2^j-1}a_{2^j+k+1}\left[P_{2,2^j+k+1}(v_2)-v_2 P_{2,2^j+k+1}(1)\right]\right.\right.\\+\left.\sum_{j=J+1}^{\infty}\sum_{k=0}^{2^j-1}b_{2^j+k+1}\left[P_{2,2^j+k+1}(v_1)-v_1P_{2,2^j+k+1}(1)\right]\right]\\
	\left.+\sum_{j=J+1}^{\infty}\sum_{k=0}^{2^j-1}a_{2^j+k+1}\left[P_{2,2^j+k+1}(t)-tP_{2,2^j+k+1}(1)\right]\right|,
\end{multline*}
\begin{multline*}
|\overline{E}_{M2}|=\left|\frac{n_2 t}{1-{n_1 n_2 v_1 v_2}} \left[\sum_{j=J+1}^{\infty}\sum_{k=0}^{2^j-1}a_{2^j+k+1} \left[P_{2,2^j+k+1}(v_2)-v_2 P_{2,2^j+k+1}(1)\right]\right.\right.
\\\left.+v_2 n_1 \sum_{j=J+1}^{\infty}\sum_{k=0}^{2^j-1}b_{2^j+k+1}\left[P_{2,2^j+k+1}(v_1)-v_1 P_{2,2^j+k+1}(1)\right]\right]\\\left.+\sum_{i=1}^{2M} b_{2^j+k+1} \left[P_{2,2^j+k+1}(t)-tP_{2,2^j+k+1}(1)\right]\right|.
\end{multline*}
where $|\overline{E}_{M1}|=|y^{M}-y|$ and $|\overline{E}_{M2}|=|z^{M}-z|$. Here again we define total error as 
\begin{eqnarray}\label{con}
||\overline{E}_M||_2=||\overline{E}_{M1}||_2+||\overline{E}_{M2}||_2.
\end{eqnarray}
\begin{theorem}\label{theorem4pbvp}
Consider the system of differential equation \eqref{p2eqn1}-\eqref{p2eqn2} with four point boundary conditions \eqref{p2eq34}-\eqref{p2eq35}. Let us assume that  $y'''(t),z'''(t)\in L^2(\mathbb{R})$ are continuous functions on $[0,1]$. Consider $y'''_{r+1}(t)$, $z'''_{r+1}(t)$  are bounded such that $\left|y'''_{r+1}(t)\right|\leq\xi_1$ and $\left|z'''_{r+1}(t)\right|\leq\xi_2$\quad $\forall t \in[0,1]$. Let $\epsilon>0$ be arbitrary small positive number and if $J>\log_2\sqrt{\frac{2 \tilde{C}}{\epsilon}}-1$, then $||\overline{E}_M||_2<\epsilon$, where $\tilde{C}$ is real number which depends on $n_1,n_2,v_2,v_2,\xi_1,\xi_2$. \end{theorem}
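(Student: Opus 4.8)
The plan is to mirror the structure of the proofs of Theorem~\ref{theoremivp} and Theorem~\ref{theorembvp}, since the four point case is, structurally, the BVP estimate plus two extra ``boundary coupling'' terms. First I would record the bound on the wavelet coefficients that is common to all three theorems: using the definition of $h_i$ and the mean value theorem one gets $|a_i|\leq\xi_1\left(\frac{1}{2^{j+1}}\right)$ and $|b_i|\leq\xi_2\left(\frac{1}{2^{j+1}}\right)$ for $i=2^j+k+1$. Next I would reuse the monotonicity argument already given to conclude the uniform bound $p_{2,i}(t)\leq\left(\frac{1}{2^{j+1}}\right)^2$ for all $t\in[0,1]$; in particular $P_{2,i}(v_1)$, $P_{2,i}(v_2)$ and $P_{2,i}(1)$ all satisfy the same bound, so each bracketed difference such as $[P_{2,2^j+k+1}(v_2)-v_2P_{2,2^j+k+1}(1)]$ is bounded in absolute value by $2\left(\frac{1}{2^{j+1}}\right)^2$ (using $0<v_2<1$), exactly as in inequalities~\eqref{i}--\eqref{j}.

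Then I would split $|\overline{E}_{M1}|$ into its three constituent sums: the two tail sums multiplied by the prefactor $\frac{n_1 t}{1-n_1n_2v_1v_2}$, and the ``diagonal'' tail sum $\sum a_{2^j+k+1}[P_{2,2^j+k+1}(t)-tP_{2,2^j+k+1}(1)]$. For each piece I would apply the triangle inequality, insert the coefficient bound and the $p_{2,i}$ bound, and then sum the resulting geometric series $\sum_{j=J+1}^{\infty}2^j\left(\frac{1}{2^{j+1}}\right)^3$ over $j$ (and similarly over $r$ after squaring and integrating, exactly as in the passage from \eqref{f} to the displayed bound $\frac{1}{9}\xi_1^2(2^{-(J+1)})^4$ in Theorem~\ref{theorembvp}). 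Each of the three pieces therefore contributes a term of the form $C_\ell\,\xi\,\left(\frac{1}{2^{J+1}}\right)^2$ where $C_\ell$ depends only on $n_1,n_2,v_1,v_2$ and on whether $\xi_1$ or $\xi_2$ appears; collecting them gives $\|\overline{E}_{M1}\|_2\leq \tilde{C}_1\left(\frac{1}{2^{J+1}}\right)^2$ and symmetrically $\|\overline{E}_{M2}\|_2\leq \tilde{C}_2\left(\frac{1}{2^{J+1}}\right)^2$, with $\tilde C=\max\{\tilde C_1,\tilde C_2\}$ (or their sum) depending on $n_1,n_2,v_1,v_2,\xi_1,\xi_2$ as claimed. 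Finally, choosing $J>\log_2\sqrt{\frac{2\tilde C}{\epsilon}}-1$ forces $\left(\frac{1}{2^{J+1}}\right)^2<\frac{\epsilon}{2\tilde C}$, hence $\|\overline{E}_{M1}\|_2<\frac{\epsilon}{2}$ and $\|\overline{E}_{M2}\|_2<\frac{\epsilon}{2}$, so $\|\overline{E}_M\|_2<\epsilon$ by \eqref{con}.

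The main obstacle is purely bookkeeping rather than conceptual: the diagonal term $\sum a_{2^j+k+1}[P_{2,2^j+k+1}(t)-tP_{2,2^j+k+1}(1)]$ must be handled with the $L^2$-in-$t$ double-sum argument (squaring, using orthogonality-type crudeness via the uniform bounds, and integrating over $[0,1]$) exactly as in Theorem~\ref{theorembvp}, whereas the two coupling terms carry the factor $t$ which, on $[0,1]$, only helps ($t\leq 1$); one must be careful that the cross terms between the three pieces, produced when squaring $|\overline{E}_{M1}|$, are also controlled — but each cross term is bounded by the product of the individual bounds (Cauchy--Schwarz or just $2|XY|\leq X^2+Y^2$), so no new difficulty arises. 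The only genuinely new point is assembling the explicit constant $\tilde C$; since the prefactor $\frac{1}{1-n_1n_2v_1v_2}$ is finite precisely because $n_1,n_2,v_1,v_2\in(0,1)$ implies $n_1n_2v_1v_2<1$, the constant is well defined, and one simply names it rather than optimizing it. I would close by remarking, as the authors do after Theorem~\ref{theorembvp}, that the argument for $\overline{E}_{M2}$ is identical with the roles of $(n_1,\xi_1)$ and $(n_2,\xi_2)$ interchanged, so it need not be written out in full.
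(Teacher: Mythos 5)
Your proposal is correct and follows essentially the same route as the paper: the same coefficient bounds $|a_i|\leq\xi_1 2^{-(j+1)}$, $|b_i|\leq\xi_2 2^{-(j+1)}$, the same uniform bound $p_{2,i}(t)\leq(2^{-(j+1)})^2$, the same treatment of the bracketed boundary-coupling terms as in the BVP case, and the same geometric-series summation leading to $\|\overline{E}_{M1}\|_2\leq\tilde{C}(2^{-(J+1)})^2$ and the stated choice of $J$. The only cosmetic difference is that the paper expands the square of $\overline{E}_{M1}$ directly into six diagonal and cross terms before inserting the bounds, whereas you apply the triangle inequality to the three pieces first; the resulting estimates are identical.
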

\begin{proof}
We perform the calculation for $\overline{E}_{M1}$, for $\overline{E}_{M2}$ it will follow accordingly. Now, by the use of definition of Haar wavelet function we can evaluate coefficients $a_{i}$ as
	\begin{eqnarray*}
		&&a_{i}=2^j\int_{0}^{1}y''(t)h_{i}(t)dt,\\
		&&a_{i}=2^j\left[\int_{\eta_{1}}^{\eta_{2}}y''(\zeta)d\zeta-\int_{\eta_{2}}^{\eta_{3}}y''(\zeta)d\zeta\right],\\
		&&a_{i}=2^j[(\eta_{2}-\eta_{1})y''(\zeta_{1})-(\eta_{3}-\eta_{2})y''(\zeta_{2})],\\
	\end{eqnarray*}
similarly coefficients $b_{i}$ as
\begin{eqnarray*}
	b_{i}=2^j[(\eta_{2}-\eta_{1})z''(\zeta_{1})-(\eta_{3}-\eta_{2})z''(\zeta_{2})],\\
\end{eqnarray*}
	where $\zeta_{1} \in (\eta_{1},\eta_{2})$ and $\zeta_{2} \in (\eta_{2},\eta_{3})$. It follows from Eq. \eqref{p2eq2} that $(\eta_{2}-\eta_{1})=(\eta_{3}-\eta_{2})=1/(2m)=1/(2^{j+1})$, then the above expression of $a_{i}$, $b_{i}$ reduces to
	\begin{eqnarray*}
		&&a_{i}=\frac{1}{2}[y''(\zeta_{1})-y''(\zeta_{2})]=\frac{1}{2}(\zeta_{1}-\zeta_{2})\frac{dz''}{dt}(\zeta),\:\:\:  \zeta \in (\zeta_{1},\zeta_{2}),\\
	&&b_{i}=\frac{1}{2}(\zeta_{1}-\zeta_{2})\frac{dy''}{dt}(\zeta),\:\:\:  \zeta \in (\zeta_{1},\zeta_{2}).
\end{eqnarray*}
	Let us consider that $\left(\frac{dy''}{dt}\right)$ is bounded such that $\left|\frac{dy''}{dt}\right|\leq\xi_1$, so we have
	\begin{equation}\label{c3}
	a_{i} \leq \xi_1\left(\frac{1}{2^{j+1}}\right).
	\end{equation}
	similarly let us consider that $\left(\frac{dz''}{dt}\right)$ is bounded such that $\left|\frac{dz''}{dt}\right|\leq\xi_2$, so we have
	\begin{equation}\label{c31}
	b_{i} \leq \xi_2\left(\frac{1}{2^{j+1}}\right).
	\end{equation}
	Here we will solve for upper bound of a function $p_{2,i}$ in all subintervals. Since $p_{2,i}(t)=0$ for $t \in [0,\eta_{1}(i)]$. The function $p_{2,i}(t)$ increases monotonically in the interval $t\in[\eta_{1}(i),\eta_{2}(i)]$. Thus $p_{2,i}(t)$ achieves its upper bound at $t= \eta_{2}(i)$ as follows
	\begin{eqnarray*}
		p_{2,i}=p_{2,2^j+k+1} \leq \frac{{[\eta_{2}(i)-\eta_{1}(i)]}^2}{2}=\frac{1}{2}\left(\frac{1}{2^{j+1}}\right)^2,\:\:\: t\in[\eta_{1}(i),\eta_{2}(i)].
	\end{eqnarray*}
	\noindent In the interval $t \in [\eta_{2}(i),\eta_{3}(i)]$ the function $p_{2,i}$ is monotonically increasing if
	\begin{eqnarray}\label{this3}
	t \leq \eta_{3},
	\end{eqnarray}
	which is obviously true.
	
	This inequality \eqref{this3} can be derived from formulas \eqref{p2eq2} and \eqref{p2eq6} and condition $\frac{dp_{2,i}(t)}{dt}>0$.
	Hence maximum value of $p_{2,i}(t)$ can be obtained by substituting $t=\eta_{3}(i)$ in eq. \eqref{p2eq6} as
	\begin{eqnarray*}
		p_{2,i}(t)=p_{2,2^j+k+1}\leq\left(\frac{1}{2^{j+1}}\right)^2,\:\:\:t\in[\eta_{2}(i),\eta_{3}(i)].
	\end{eqnarray*}
	When $t \in [\eta_{3},1]$ the function $p_{2,i}(t)$ can be expanded as (by eq. \eqref{p2eq6}) (see \cite{VF2001})
	\begin{eqnarray*}
		p_{2,i}(t)=\left(\frac{1}{2^{j+1}}\right)^2.
	\end{eqnarray*}
	The function $p_{2,i}(t)$ increases monotonically in $[0,1]$, since it increases monotonically in every sub interval of $[0,1]$. So upper bound of $p_{2,i}(t)$ in $[0,1]$ is given by
	\begin{eqnarray}\label{d3}
	p_{2,i}(t)\leq \left(\frac{1}{2^{j+1}}\right)^2 \:\:\: \forall t \in[0,1] .
	\end{eqnarray}
	Expanding quadrate of $L^2$ norm of error function, we obtain
	\begin{multline}
||\overline{E}_{M1}||_{2}^2= \displaystyle\int_{0}^{1}\left(\frac{n_1 t}{1-{n_1 n_2 v_1 v_2}}\right. \left[n_2 v_1 \sum_{j=J+1}^{\infty}\sum_{k=0}^{2^j-1}a_{2^j+k+1}\left[P_{2,2^j+k+1}(v_2)-v_2P_{2,2^j+k+1}(1)\right]\right.+\\\sum_{j=J+1}^{\infty}\sum_{k=0}^{2^j-1}b_{2^j+k+1}\left.\left[P_{2,2^j+k+1}(v_1)-v_1P_{2,2^j+k+1}(1)\right]\right]\\+\sum_{j=J+1}^{\infty}\sum_{k=0}^{2^j-1}a_{2^j+k+1}a_i\left.\left[P_{2,2^j+k+1}(t)-tP_{2,2^j+k+1}(1)\right]\right)^2 dt.
	\end{multline}
	Expanding and calculating the integral we have
	\begin{multline}\label{b3}
	||\overline{E}_{M1}||_{2}^2 =\frac{n_1^2n_2^2v_1^2}{3(1-n_1 n_2 v_1 v_2)^2}\sum_{j=J+1}^{\infty}\sum_{k=0}^{2^j-1}\sum_{r=J+1}^{\infty}\sum_{s=0}^{2^r-1}a_{2^j+k+1}a_{2^r+s+1}(p_{2,2^j+k+1}(v_2)-v_2p_{2,2^j+k+1}(1))
	\\(p_{2,2^r+s+1}(v_2)-v_2p_{2,2^r+s+1}(1))\\
	+\frac{n_1^2}{3(1-n_1 n_2 v_1 v_2)^2}\sum_{j=J+1}^{\infty}\sum_{k=0}^{2^j-1}\sum_{r=J+1}^{\infty}\sum_{s=0}^{2^r-1}b_{2^j+k+1}b_{2^r+s+1}
	\\(p_{2,2^j+k+1}(v_1)-v_1p_{2,2^j+k+1}(1))(p_{2,2^r+s+1}(v_1)-v_1p_{2,2^r+s+1}(1))\\
	+\sum_{j=J+1}^{\infty}\sum_{k=0}^{2^j-1}\sum_{r=J+1}^{\infty}\sum_{s=0}^{2^r-1}a_{2^r+s+1}a_{2^j+k+1}
	\\\int_{0}^{1}(p_{2,2^j+k+1}(t)-tp_{2,2^r+s+1}(1))(p_{2,2^r+s+1}(t)-tp_{2,2^r+s+1}(1))dt\\
	+\frac{2n_1^2n_2v_1}{3(1-n_1 n_2 v_1 v_2)^2}\sum_{j=J+1}^{\infty}\sum_{k=0}^{2^j-1}\sum_{r=J+1}^{\infty}
	\\\sum_{s=0}^{2^r-1}a_{2^j+k+1}b_{2^r+s+1}(p_{2,2^j+k+1}(v_2)-v_2p_{2,2^j+k+1}(1))(p_{2,2^r+s+1}(v_1)-v_1p_{2,2^r+s+1}(1))\\
	+\frac{2n_1}{1-n_1n_2v_1v_2}\sum_{j=J+1}^{\infty}\sum_{k=0}^{2^j-1}\sum_{r=J+1}^{\infty}
	\\\sum_{s=0}^{2^r-1}b_{2^j+k+1}a_{2^r+s+1}(p_{2,2^j+k+1}(v_1)-v_1p_{2,2^j+k+1}(1))\Big(\frac{1}{2}p_{2,2^r+s+1}(t)-\frac{1}{3}p_{2,2^r+s+1}(1)\Big)
	\\+\frac{2n_1n_2v_1}{1-n_1n_2v_1v_2}\sum_{j=J+1}^{\infty}\sum_{k=0}^{2^j-1}\sum_{r=J+1}^{\infty}\sum_{s=0}^{2^r-1}b_{2^j+k+1}a_{2^r+s+1}(p_{2,2^r+s+1}(v_2)-v_2p_{2,2^r+s+1}(1))\\
	\Big(\frac{1}{2}p_{2,2^j+k+1}(t)-\frac{1}{3}p_{2,2^j+k+1}(1)\Big).
	\end{multline}
	
	Now inserting equation \eqref{c3}, \eqref{c31} and \eqref{d3} in equation \eqref{b3} and following the calculation similar to the proof of theorem \ref{theorembvp}, we get
	\begin{eqnarray*}
		&&||\overline{E}_{M1}||_{2} \leq \tilde{C}\left(\frac{1}{2^{J+1}}\right)^2
	\end{eqnarray*}
where $\tilde{C}$ is a constant which depends on $n_1,n_2,v_1,v_2,\xi_1,\xi_2$. Rest of the proof can be completed as we did for theorem \ref{theoremivp}.
\end{proof}

\subsection{Numerical Illustration for HWCA4PTBVP}

In this section we will discuss two numerical problems based on system of nonlinear singular four point boundary value problem (\cite{Barnwal2019}).

\subsubsection{Example 5 (\cite{Barnwal2019})} \label{Problem5}

Consider the system of singular nonlinear differential equations \eqref{p2eqn1}-\eqref{p2eqn2}  with four point boundary conditions \eqref{p2eq34}-\eqref{p2eq35}with $n_1=1$, $n_2=1$, $v_1=\frac{1}{2}$, $v_2=\frac{1}{3}$, $k_1=\frac{1}{2}$, $k_2=\frac{1}{2}$, $\omega_1=-\frac{1}{2}$, $\omega_2=-\frac{1}{2}$, $f_1(t,y(t),z(t))=\Big(\frac{99}{35}t-\frac{1}{2}+\Big(t^2-\frac{66}{35}t^3+\frac{1089}{1225}t^4\Big)z-y^2z\Big)$ and  $f_2(t,y(t),z(t))=\Big(-\frac{24}{35t}+\frac{64}{1225}t^5-\frac{2112}{42875}t^6-yz^2\Big)$. Applying solution method (subsection \ref{method4PBVP}) to solve this problem, we get system of non-linear equations. Thus we arrive at \eqref{p2eq40}-\eqref{p2eq41}.  To solve the non-linear equations we use Newton Raphson method to calculate wavelet coefficients ($a_i$) and ($b_i$). After calculating wavelet coefficients ($a_i$) and ($b_i$) we get our required solution by substituting $a_i$ and $b_i$ in \eqref{p2eq37} and \eqref{p2eq39}. For computation of absolute error see subsubsection $\ref{Problem1}$. The exact solution $\tilde{y}(t)$ and $\tilde{z}(t)$ of this problem are $-\frac{33}{35}t^2+t$ and $\frac{8}{35}t^2$ respectively.

For initial guess $[1.25,1.25,\cdots,1.25]$ computed solutions for $y^M(t)$ and $z^M(t)$ is given in Table \ref{p2Table9} and Table \ref{p2Table10}, respectively for $J=3$ and $J=4$. Graph for $y^M(t)$ and $z^M(t)$ with exact solution is given in figure \ref{p2fig17} and figure \ref{p2fig18}, respectively for $J=4$. Graph of absolute errors in computation of $y^M(t)$ and $z^M(t)$ is given in figure \ref{p2fig19} and figure \ref{p2fig20}, respectively for $J=1$, $J=2$, $J=3$ and $J=4$.

If we change initial guesses to $[1.29,1.29,\cdots,1.29]$ and $[1.21,1.21,\cdots,1.21]$ we observe that solution does not change significantly. 

\begin{table}[H]											 
\centering											
\begin{center}											
\resizebox{9cm}{2.5cm}{											
\begin{tabular}	{|c | c|  c|  c|  c|}		
\hline
$t$	&	$J=3$	&	$J=4$	&	Exact Solution	&	Successive Iter. Tech. \cite{Barnwal2019}	\\\hline
0	&	0	&	0	&	0	&	0	\\\hline
0.1	&	0.0905714	&	0.0905714	&	0.0905714	&	0.09057158	\\\hline
0.2	&	0.162286	&	0.162286	&	0.162286	&	0.16228593	\\\hline
0.3	&	0.215143	&	0.215143	&	0.215143	&	0.21514312	\\\hline
0.4	&	0.249143	&	0.249143	&	0.249143	&	0.24914315	\\\hline
0.5	&	0.264286	&	0.264286	&	0.264286	&	0.26428604	\\\hline
0.6	&	0.260571	&	0.260571	&	0.260571	&	0.26057177	\\\hline
0.7	&	0.238	&	0.238	&	0.238	&	0.23800035	\\\hline
0.8	&	0.196571	&	0.196571	&	0.196571	&	0.19657177	\\\hline
0.9	&	0.136286	&	0.136286	&	0.136286	&	0.13628605	\\\hline
1	&	0.0571429	&	0.0571429	&	0.0571429	&	0.05714317	\\\hline
$L^\infty$	&	5.55112E-17	&	8.32667E-17	&		& 3.4978 E-7		\\\hline

\end{tabular}}								
\end{center}
\caption{\small{Solutions $y^M(t)$ for $J=3$ and $J=4$  for example \ref{Problem5}.}}	
\label{p2Table9}											
\end{table}

\begin{table}[H]											 
\centering											
\begin{center}											
\resizebox{9cm}{2.5cm}{											
\begin{tabular}	{|c | c|  c|  c|  c|}		
\hline
$t$	&	$J=3$	&	$J=4$	&	Exact Solution	&	Successive Iter. Tech. \cite{Barnwal2019}	\\\hline
0	&	0	&	0	&	0	&	0	\\\hline
0.1	&	0.00228571	&	0.00228571	&	0.00228571	&	0.00228586	\\\hline
0.2	&	0.00914286	&	0.00914286	&	0.00914286	&	0.00914306	\\\hline
0.3	&	0.0205714	&	0.0205714	&	0.0205714	&	0.02057168	\\\hline
0.4	&	0.0365714	&	0.0365714	&	0.0365714	&	0.03657172	\\\hline
0.5	&	0.0571429	&	0.0571429	&	0.0571429	&	0.05714317	\\\hline
0.6	&	0.0822857	&	0.0822857	&	0.0822857	&	0.08228604	\\\hline
0.7	&	0.112	&	0.112	&	0.112	&	0.11200033	\\\hline
0.8	&	0.146286	&	0.146286	&	0.146286	&	0.14628604	\\\hline
0.9	&	0.185143	&	0.185143	&	0.185143	&	0.18514316	\\\hline
1	&	0.228571	&	0.228571	&	0.228571	&	0.2285717	\\\hline
$L^\infty$	&	5.55112E-17	&	5.55112E-17	&		&	3.3431E-7	\\\hline

\end{tabular}}								
\end{center}
\caption{\small{Solutions $z^M(t)$ for $J=3$, $J=4$ for example \ref{Problem5}.}}	
\label{p2Table10}											
\end{table}

\begin{figure}
        \begin{subfigure}[b]{0.45\textwidth}
\includegraphics[scale=0.3]{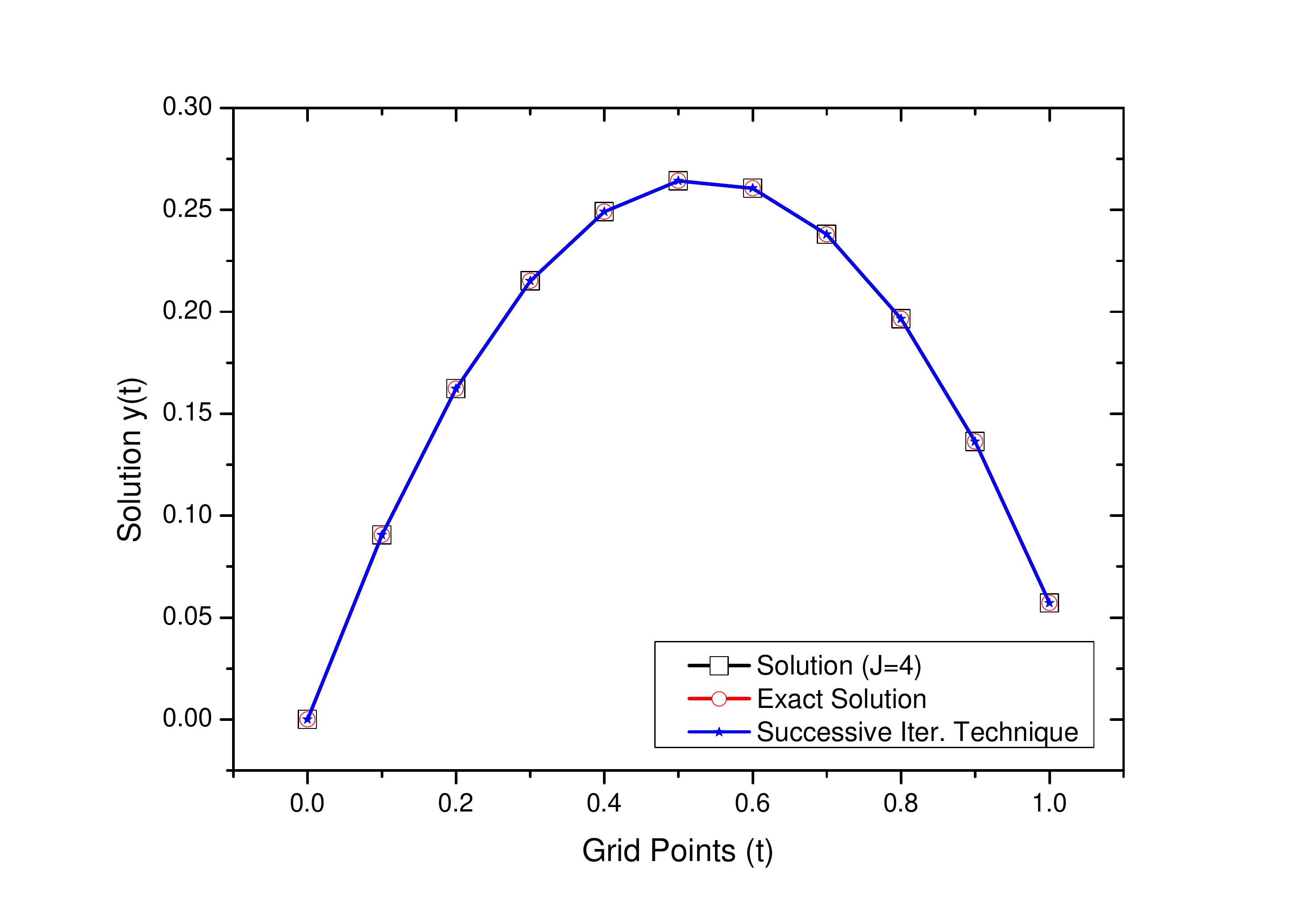}
\caption{Graph of $y^M(t)$ for $J=4$  for example \ref{Problem5}.}\label{p2fig17}
        \end{subfigure}\hspace{0.5in}
        \begin{subfigure}[b]{0.45\textwidth}
\includegraphics[scale=0.3]{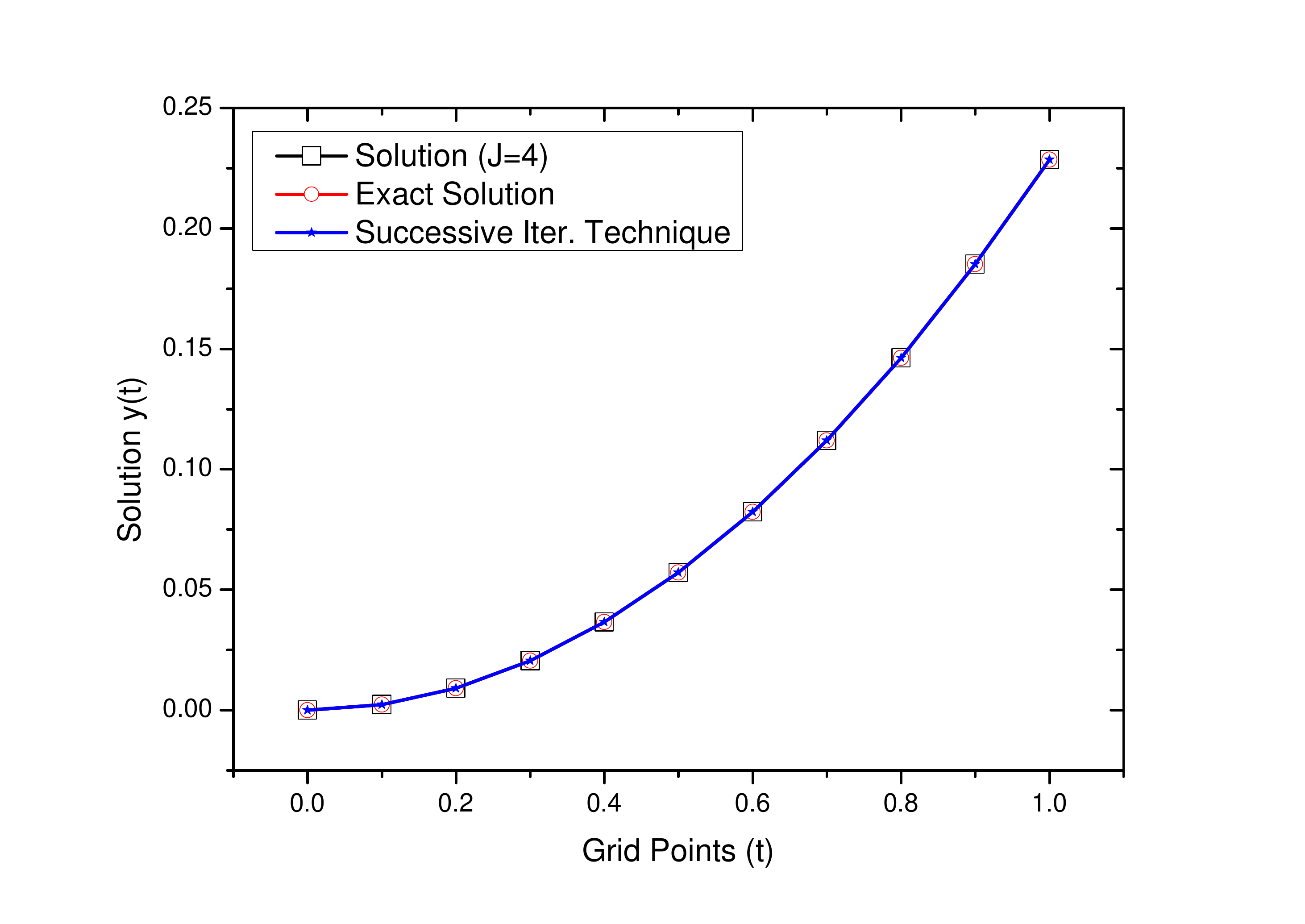}
\caption{Graph of $z^M(t)$ for $J=4$  for example \ref{Problem5}.}\label{p2fig18}
        \end{subfigure}
        \caption{}
        \label{4pbvpfig1}
\end{figure}

\begin{figure}
        \begin{subfigure}[b]{0.45\textwidth}
\includegraphics[scale=0.3]{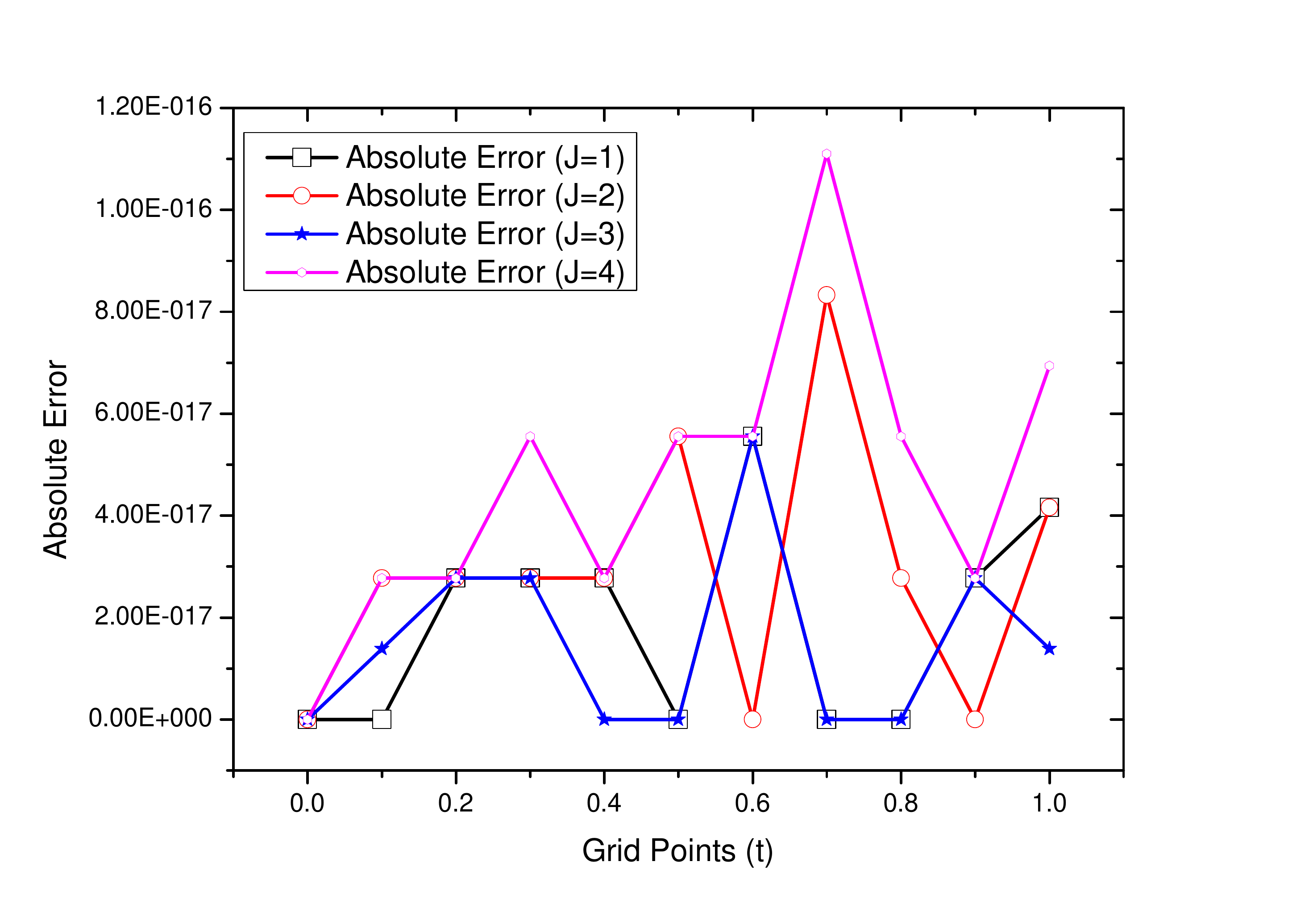}
\caption{Graph of absolute errors in $y^M(t)$  for example \ref{Problem5}.}\label{p2fig19}
        \end{subfigure}\hspace{0.5in}
        \begin{subfigure}[b]{0.45\textwidth}
\includegraphics[scale=0.3]{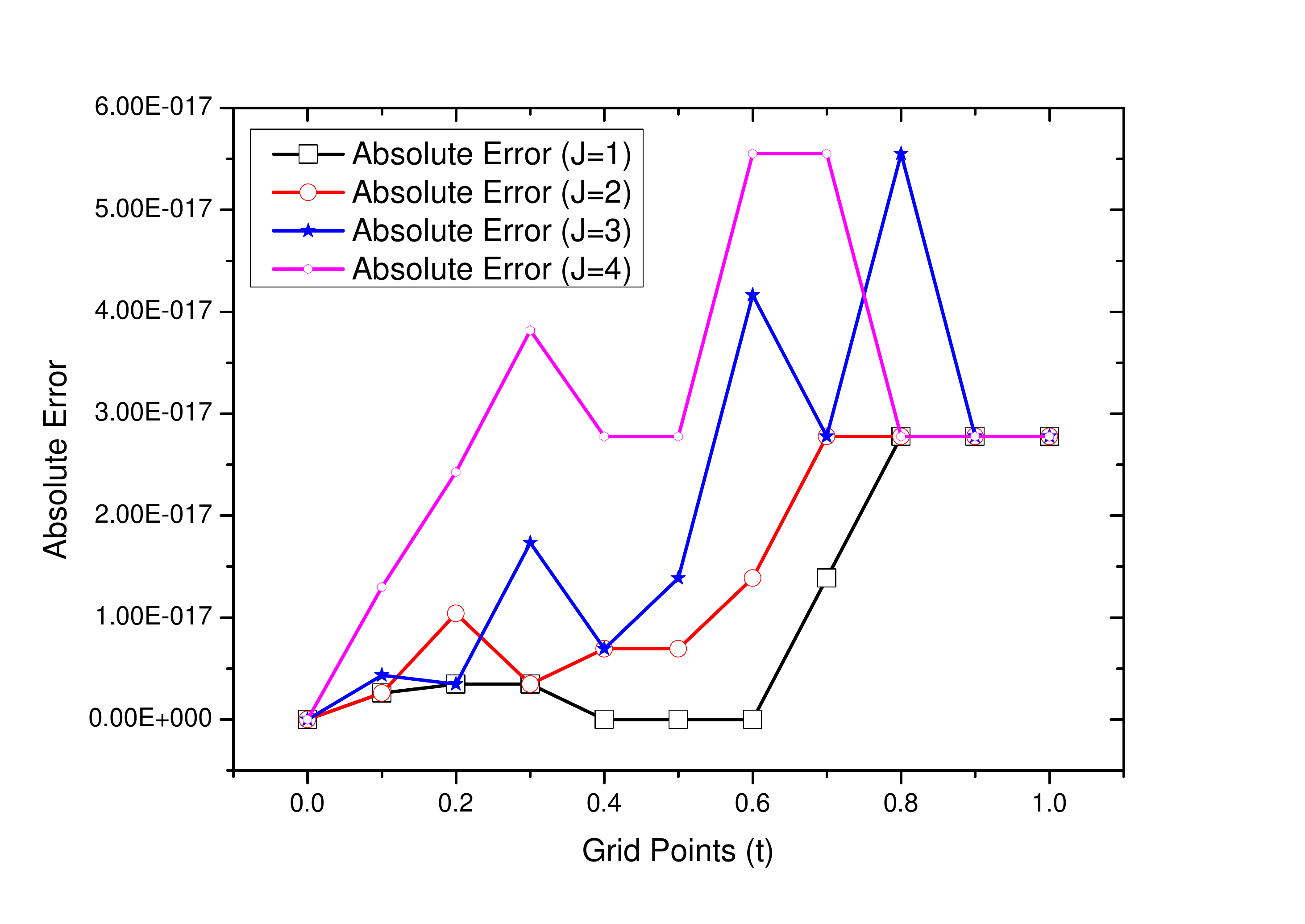}
\caption{Graph of absolute errors in $z^M(t)$  for example \ref{Problem5}.}\label{p2fig20}
        \end{subfigure}
        \caption{}
                \label{4pbvpfig2}
\end{figure}

\subsubsection{Example 6 (\cite{Barnwal2019})}  \label{Problem6}

Consider the system of singular differential equations \eqref{p2eqn1}-\eqref{p2eqn2} with four point boundary conditions \eqref{p2eq34}-\eqref{p2eq35}with $n_1=\frac{2}{3}$, $n_2=\frac{1}{3}$, $v_1=\frac{1}{2}$, $v_2=\frac{1}{4}$, $k_1=\frac{1}{2}$, $k_2=\frac{1}{2}$, $\omega_1=-\frac{1}{2}$, $\omega_2=-\frac{1}{2}$, $f_1(t,y(t),z(t))=\Big(-\frac{283}{216}+\frac{67}{9}t+\Big(\frac{80089}{16}-\frac{18961}{2}t+4489t^2\Big)\frac{t^2z^2}{729}-y^2z^2\Big)$ and  $f_2(t,y(t),z(t))=\Big(-\frac{1}{3}+\frac{3}{2}t+\Big(-67t+\frac{283}{4}\Big)^2 \Big(\frac{1}{4}t^2-\frac{2}{3}t+\frac{4}{9}\Big)\frac{t^4}{729}-y^2z^2\Big)$. Here we are computing absolute error described in subsubsection $\ref{Problem1}$. The exact solution $\tilde{y}(t)$ and $\tilde{z}(t)$ of this problem are $-\frac{67}{27}t^2+\frac{283}{108}t$ and $-\frac{1}{2}t^2+\frac{2}{3}t$, respectively.

For initial guess $[1.75,1.75,\cdots,1.75]$ computed solutions for $y^M(t)$ and $z^M(t)$ is given in Table \ref{p2Table11} and Table \ref{p2Table12}, respectively for $J=3$ and $J=4$. Graph for $y^M(t)$ and $z^M(t)$ with exact solution is given in figure \ref{p2fig21} and figure \ref{p2fig22}, respectively for $J=4$. Graph of absolute errors in computation of $y^M(t)$ and $z^M(t)$ is given in figure \ref{p2fig23} and figure \ref{p2fig24}, respectively for $J=1$, $J=2$, $J=3$ and $J=4$.

If the initial guesses are changed to $[1.79,1.79,\cdots,1.79]$ and $[1.71,1.71,\cdots,1.71]$ we observe that solution does not vary which proves that method is stable.

\begin{table}[H]											 
\centering											
\begin{center}											
\resizebox{9cm}{2.5cm}{											
\begin{tabular}	{|c | c|  c|  c|  c|}		
\hline
$t$	&	$J=3$	&	$J=4$	&	Exact Solution	&	Successive Iter. Tech. \cite{Barnwal2019}	\\\hline
0	&	0	&	0	&	0	&	0	\\\hline
0.1	&	0.237222	&	0.237222	&	0.237222	&	0.237222	\\\hline
0.2	&	0.424815	&	0.424815	&	0.424815	&	0.42481451	\\\hline
0.3	&	0.562778	&	0.562778	&	0.562778	&	0.5627774	\\\hline
0.4	&	0.651111	&	0.651111	&	0.651111	&	0.65111068	\\\hline
0.5	&	0.689815	&	0.689815	&	0.689815	&	0.68981435	\\\hline
0.6	&	0.678889	&	0.678889	&	0.678889	&	0.67888841	\\\hline
0.7	&	0.618333	&	0.618333	&	0.618333	&	0.61833284	\\\hline
0.8	&	0.508148	&	0.508148	&	0.508148	&	0.50814765	\\\hline
0.9	&	0.348333	&	0.348333	&	0.348333	&	0.34833284	\\\hline
1	&	0.138889	&	0.138889	&	0.138889	&	0.13888839	\\\hline
$L^\infty$	&	1.11022E-16	&	1.11022E-16	&		& 4.9806E-7		\\\hline

\end{tabular}}								
\end{center}
\caption{\small{Solutions $y^M(t)$ for $J=3$ and $J=4$  for example \ref{Problem6}.}}	
\label{p2Table11}	
										
\end{table}

\begin{table}[H]											 
\centering											
\begin{center}											
\resizebox{9cm}{2.5cm}{											
\begin{tabular}	{|c | c|  c|  c|  c|}		
\hline
$t$	&	$J=3$	&	$J=4$	&	Exact Solution	&	Successive Iter. Tech. \cite{Barnwal2019}	\\\hline
0	&	0	&	0	&	0	&	0	\\\hline
0.1	&	0.0616667	&	0.0616667	&	0.0616667	&	0.06166623	\\\hline
0.2	&	0.113333	&	0.113333	&	0.113333	&	0.11333273	\\\hline
0.3	&	0.155	&	0.155	&	0.155	&	0.15499929	\\\hline
0.4	&	0.186667	&	0.186667	&	0.186667	&	0.18666591	\\\hline
0.5	&	0.208333	&	0.208333	&	0.208333	&	0.20833259	\\\hline
0.6	&	0.22	&	0.22	&	0.22	&	0.21999934	\\\hline
0.7	&	0.221667	&	0.221667	&	0.221667	&	0.22166612	\\\hline
0.8	&	0.213333	&	0.213333	&	0.213333	&	0.21333293	\\\hline
0.9	&	0.195	&	0.195	&	0.195	&	0.19499974	\\\hline
1	&	0.166667	&	0.166667	&	0.166667	&	0.16666655	\\\hline
$L^\infty$	&	5.55112E-17	&	5.55112E-17	&		&	7.5635E-7	\\\hline
\end{tabular}}								
\end{center}
\caption{\small{Solutions $z^M(t)$ for $J=3$ and $J=4$  for example \ref{Problem6}.}}	
\label{p2Table12}											
\end{table}

\begin{figure}
        \begin{subfigure}[b]{0.45\textwidth}
\includegraphics[scale=0.3]{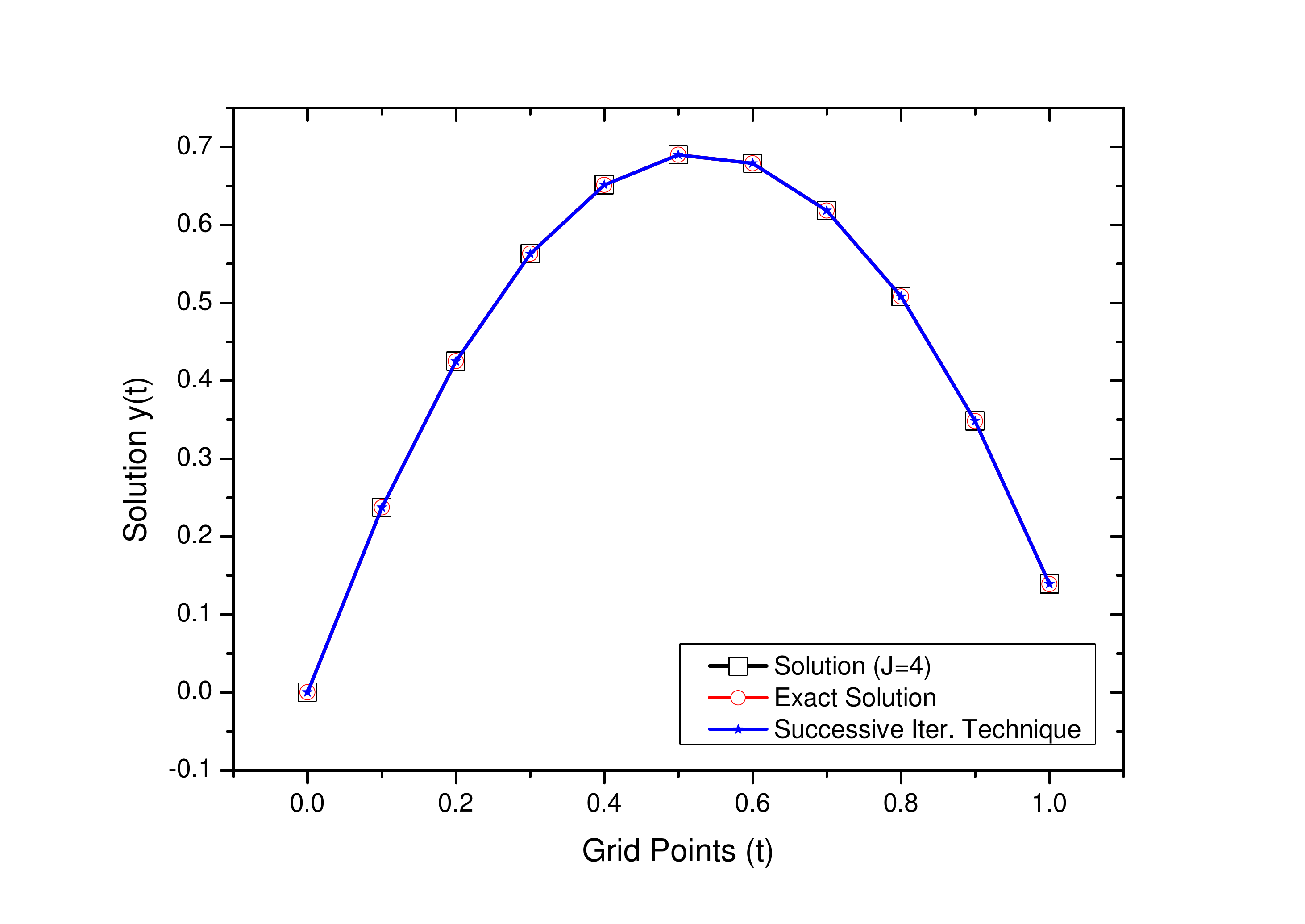}
\caption{Graph of $y^M(t)$ for $J=4$  for example \ref{Problem6}.}\label{p2fig21}
        \end{subfigure}\hspace{0.5in}
        \begin{subfigure}[b]{0.45\textwidth}
\includegraphics[scale=0.3]{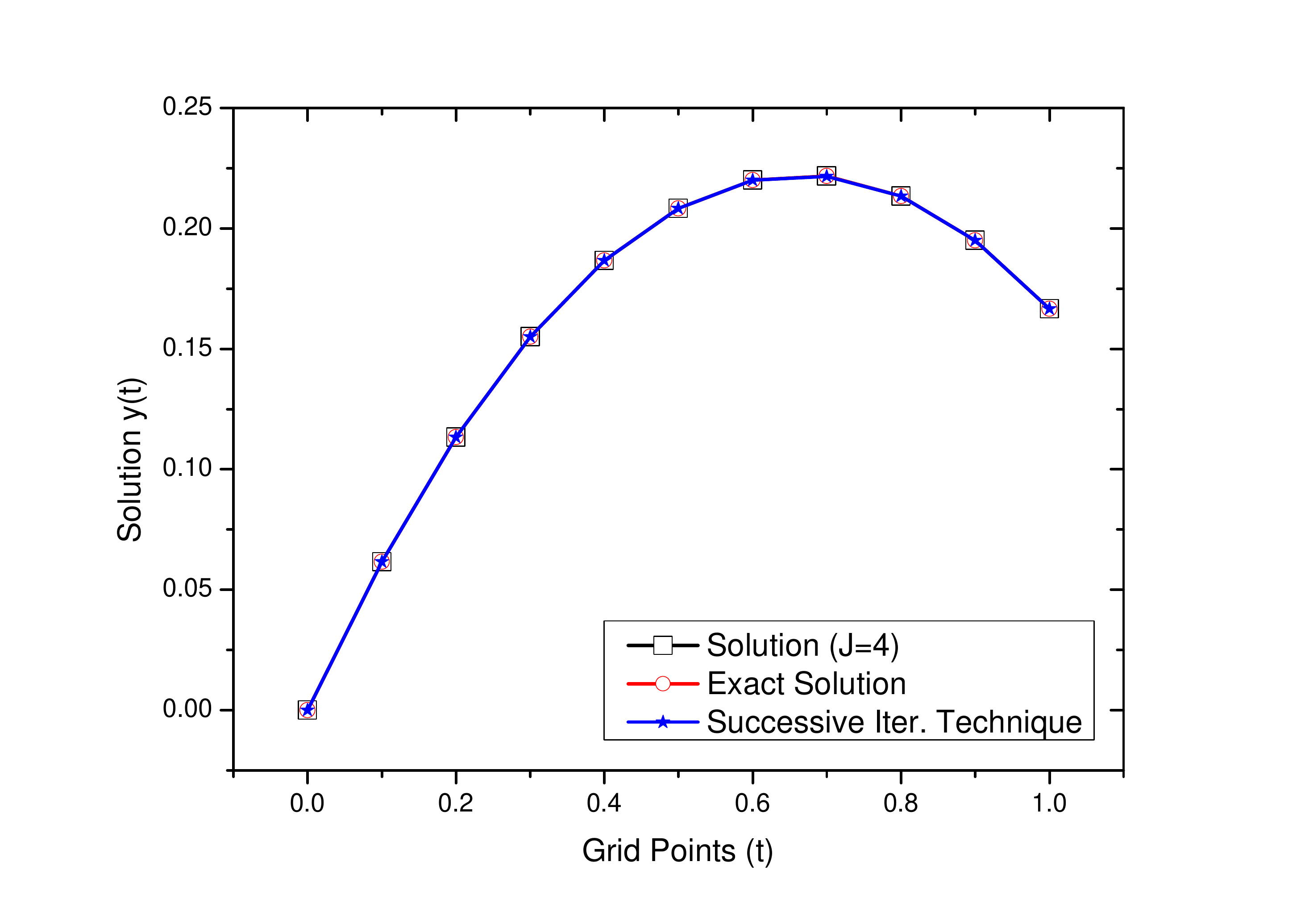}
\caption{Graph of $z^M(t)$ for $J=4$  for example \ref{Problem6}.}\label{p2fig22}
        \end{subfigure}
        \caption{}
                \label{4pbvpfig3}
\end{figure}

\begin{figure}
        \begin{subfigure}[b]{0.45\textwidth}
\includegraphics[scale=0.3]{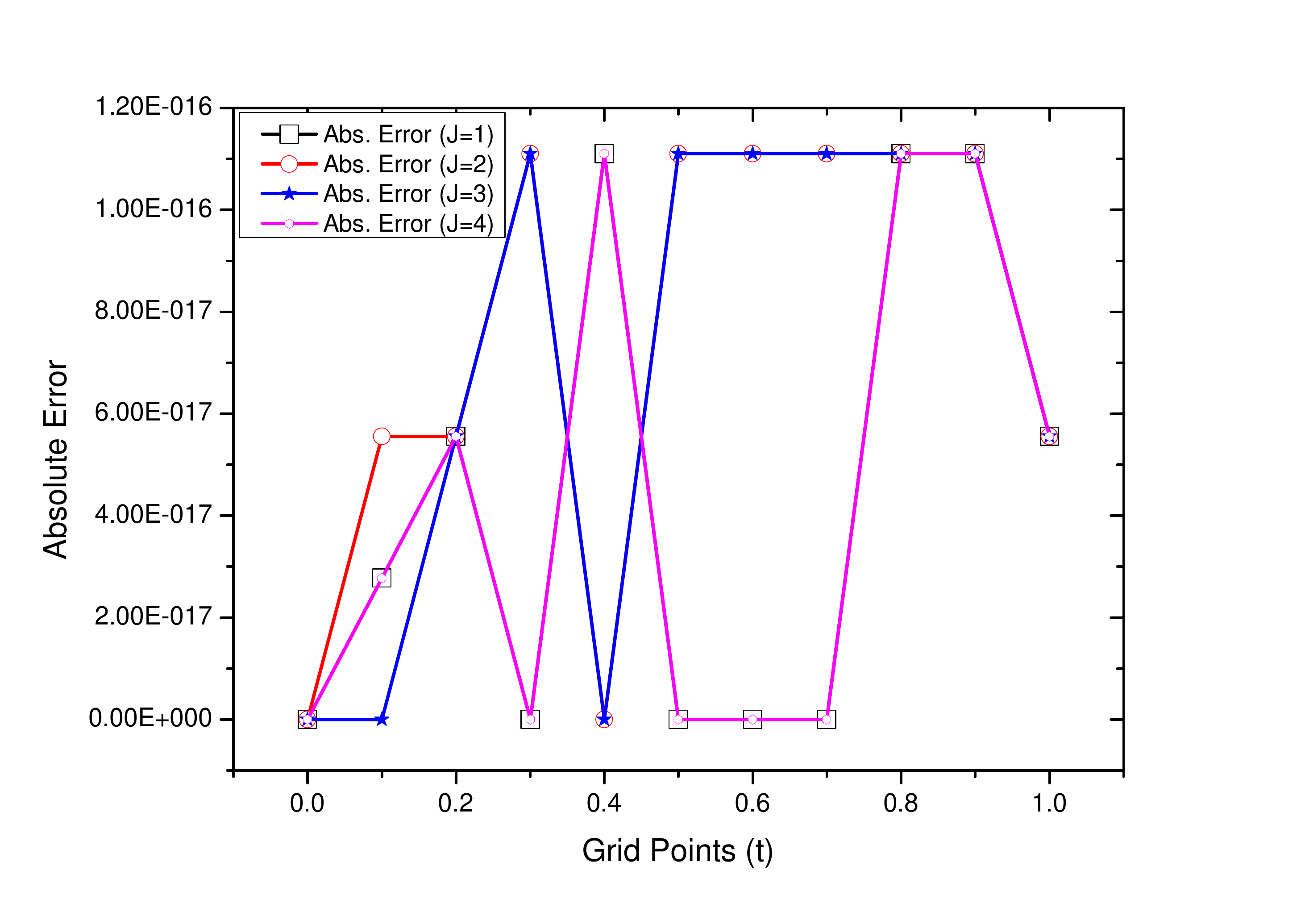}
\caption{Graph of absolute errors in $y^M(t)$  for example \ref{Problem6}.}\label{p2fig23}
        \end{subfigure}\hspace{0.5in}
        \begin{subfigure}[b]{0.45\textwidth}
\includegraphics[scale=0.3]{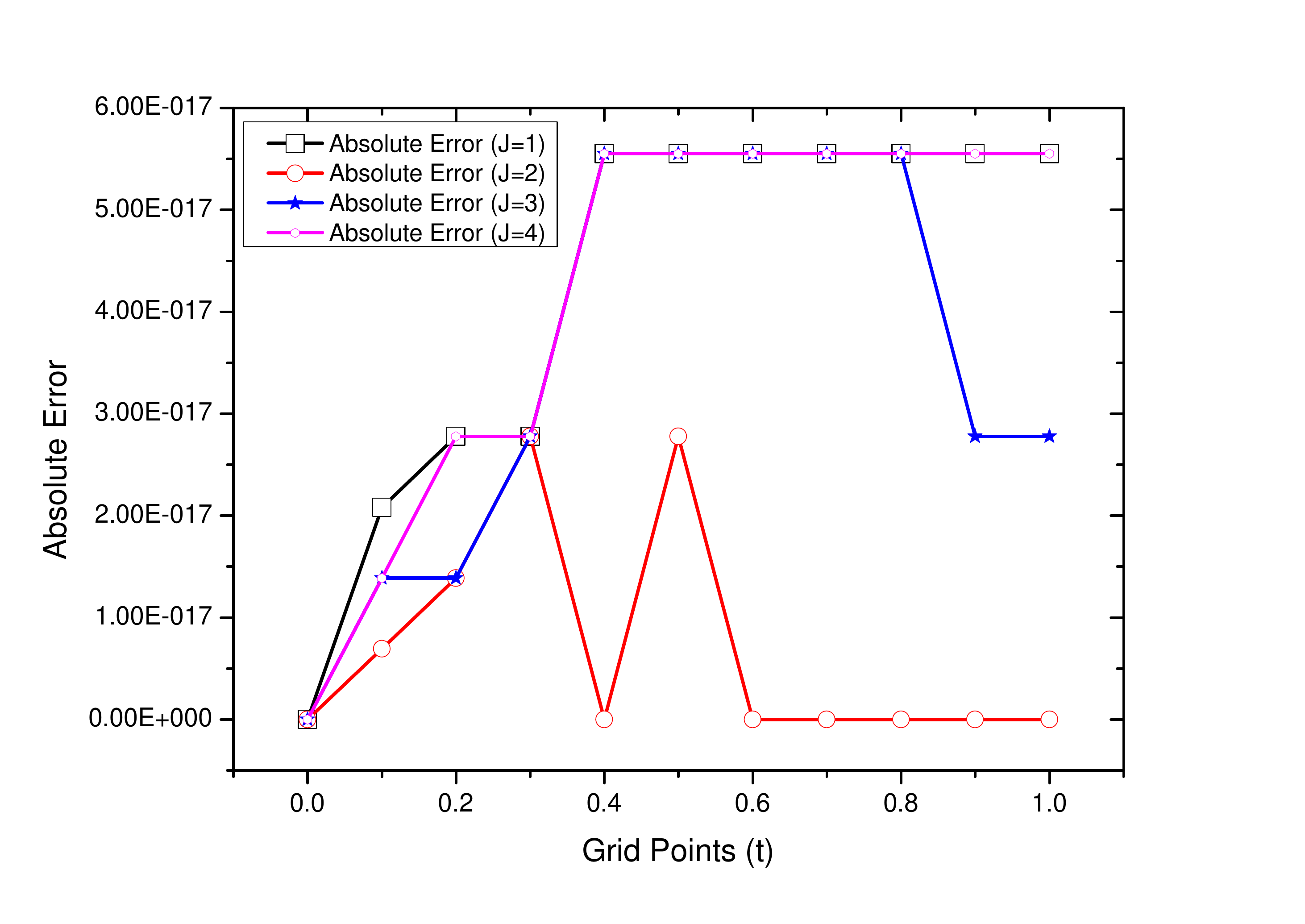}
\caption{Graph of absolute errors in $z^M(t)$  for example \ref{Problem6}.}\label{p2fig24}
        \end{subfigure}
        \caption{}
                \label{4pbvpfig4}
\end{figure}

\section{Remark} \label{remarkfinal}

In this work we have used Haar collocation method followed by Newton Raphson method to solve system of Lane-Emden equations subject to IVPs, BVPs and Four Point BVPs. 
For IVP we have given tables \ref{p2Table1}, \ref{p2Table2}, \ref{p2Table3}, \ref{p2Table4}, and figures \ref{ivpfig1}, \ref{ivpfig2}, \ref{ivpfig3}, \ref{ivpfig4}. It is visible from the table that at $J=3,4$, that is 16, 32 divisions 100\% accuracy is achieved.  Figures  \ref{ivpfig1}, \ref{ivpfig2}, \ref{ivpfig3}, \ref{ivpfig4} show the nature as well as accuracy of the computed solution.

For BVP we have given tables \ref{p2Table5}, \ref{p2Table6}, \ref{p2Table7}, \ref{p2Table8}, and figures \ref{bvpfig1}, \ref{bvpfig2}, \ref{bvpfig3}, \ref{bvpfig4}. It is visible from the table that at $J=3,4$, that is 16, 32 divisions absolute $L^\infty$ error is almost tending to zero. Figures  \ref{bvpfig1}, \ref{bvpfig2}, \ref{bvpfig3}, \ref{bvpfig4} show the nature as well as accuracy of the computed solution.

For the 4 point BVP we have given tables \ref{p2Table9}, \ref{p2Table10}, \ref{p2Table11}, \ref{p2Table12}, and figures \ref{4pbvpfig1}, \ref{4pbvpfig2}, \ref{4pbvpfig3}, \ref{4pbvpfig4}. It is visible from the table that at $J=3,4$, that is 16, 32 divisions absolute $L^\infty$ error is almost tending to zero. Figures  \ref{4pbvpfig1}, \ref{4pbvpfig2}, \ref{4pbvpfig3}, \ref{4pbvpfig4} show the nature as well as accuracy of the computed solution. We have compared our result with a recently published work by Barnwal et al. (Successive Iteration Technique \cite{Barnwal2019}). In our case error is almost zero while incase of  \cite{Barnwal2019} error is of order $10^{-6}$ or $10^{-7}$.

Our method takes minimum number of spatial division and only finite number of iterations (of Newton Raphson) to obtain exact solution. We observe that our solution approaches to exact solution as the value of $J$ is increased to $3,4$ or so. We also observed that if we makes small perturbation in initial guess then our solution does not vary significantly. Moreover our proposed method takes less time and computational work as compared to other numerical methods  like Monotonic iterative technique. The computed results shows the accuracy and stability of proposed method. Hence this method is more reliable and robust. 
\bibliographystyle{plain}
\bibliography{MasterNarendraPaper2}
\end{document}